\newtheorem{theorem}{Theorem}[section]
\newtheorem{corollary}[theorem]{Corollary}
\newtheorem{remark}[theorem]{Remark}
\newtheorem{lemma}[theorem]{Lemma}
\newtheorem{proposition}[theorem]{Proposition}
\newtheorem{definition}[theorem]{Definition}
\newtheorem{examples}[theorem]{Examples}
\newtheorem{question}[theorem]{Question}
\numberwithin{equation}{section}
\def\R{\mathbb{R}}
\def\r{\mathbb{R}}
\def\rn{\mathbb{R}^N}
\def\z{\mathbb{Z}}
\def\z2{\mathbb{Z}_2}
\def\s1{\mathbb{S}^1}
\def\n{\mathbb{N}}
\def\eps{\varepsilon}
\def\rh{\rightharpoonup}
\def\io{\int_{\Omega}}
\def\irn{\int_{\r^N}}
\def\vp{\varphi}
\def\vt{\vartheta}
\def\vr{\varrho}
\def\o{\Omega}
\def\t{\Theta}
\def\cC{\mathcal{C}}
\def\cN{\mathcal{N}}
\def\cU{\mathcal{U}}
\def\supp{\mathrm{supp}}
\def\what{\widehat}
\def\tilde{\widetilde}
\def\d{\,\mathrm{d}}
\def\dist{\mathrm{dist}}
\def\dim{\mathrm{dim}}
\def\cat{\mathrm{cat}}
\def\cupl{\mathrm{cup\text{-}length}}
\author{Mónica Clapp, \ Jorge Faya\footnote{J. Faya is supported  by ANID (Chile) through the project FONDECYT/Iniciacion 11190423}  \ and \ Alberto Saldaña\footnote{A. Saldaña is supported by UNAM-DGAPA-PAPIIT (Mexico) grant IN102925  and by SECIHTI (Mexico) grant CBF2023-2024-116.}\ \thanks{\Letter\ Corresponding author: \texttt{alberto.saldana@im.unam.mx}}
}
\title{Critical equations with a sharp change of sign in the nonlinearity}
\date{}
\begin{document}
\maketitle

\begin{abstract}
We establish the existence and nonexistence of entire solutions to a semilinear elliptic problem whose nonlinearity is the critical power multiplied by a function that takes the value $1$ in an open bounded region of $\rn$ and the value $-1$ in its complement. The existence or not of solutions depends on the geometry of the bounded region, in a way analogous to what happens with the classical critical Dirichlet problem in a bounded domain. Our methods are variational and include the use of topological tools.
\smallskip

\emph{Key words and phrases:} Sign-changing nonlinearity, critical exponent, entire solutions.
\smallskip

\emph{2020 Mathematics Subject Classification:} 35J61, 35B33, 35B08, 35J20.
\end{abstract}
	
\section{Introduction}
	
Let $\o$ be a bounded smooth open subset of $\rn$, not necessarily connected, $N\geq 3$, and $Q_\o:\rn\to\r$ be given by
\begin{equation*}
Q_\o(x):=
\begin{cases}
1 & \text{if \ }x\in\o, \\
-1 & \text{if \ }x\in\rn\smallsetminus\o.
\end{cases}
\end{equation*}
Consider the critical problem
\begin{equation} \label{eq:problem}
\begin{cases}
-\Delta u + \lambda\mathds{1}_\o u = Q_\o(x)|u|^{2^*-2}u,\\
u\in D^{1,2}(\rn),
\end{cases} 
\end{equation}
where $\mathds{1}_\o$ is the characteristic function of $\o$, $2^*:=\frac{2N}{N-2}$ is the critical Sobolev exponent, and $\lambda\in(-\Lambda_\o,0]$ with
\begin{equation}\label{eq:Lambda}
\Lambda_{\Omega}:=\inf_{\substack{u\in D^{1,2}(\mathbb{R}^N)\\ u\neq 0}}\frac{\int_{\mathbb{R}^{N}}|\nabla u|^{2}}{\int_{\Omega}u^{2}}.
\end{equation}

This problem arises in some nonlinear optics models describing the behavior of optical waveguides propagating through a stratified dielectric medium, such as those described in \cite{st,st2}. In \cite{as}, Ackermann and Szulkin analyzed the behavior of ground-state solutions for such models as the region where self-focusing occurs shrinks, and showed that a concentration phenomenon occurs. Later, in \cite{fw}, Fang and Wang described their asymptotic behavior by identifying their limiting profile as a minimum energy solution of \eqref{eq:problem}. This limiting equation, with both subcritical and critical nonlinearities, has been further studied, for instance, in \cite{chs,cps,l}, and similar elliptic systems were considered in \cite{css,jsx,zz}.

Although the solutions of problem \eqref{eq:problem} are defined in all $\rn$, we will show in this work that their existence or nonexistence depends solely on the open set $\o$ and the number $\lambda$, in a manner analogous to what happens with the critical equation
\begin{equation} \label{eq:classical}
-\Delta u + \lambda u = |u|^{2^*-2}u,\qquad u\in D_0^{1,2}(\o),
\end{equation}
in a bounded domain $\o\subset\rn$.

A  classical result of Brezis and Nirenberg \cite{bn} establishes the existence of a minimum energy solution to \eqref{eq:classical}
in any bounded domain $\o\subset\rn$, $N\geq 4$, for any $\lambda\in(-\lambda_1(\o),0)$, where $\lambda_1(\o)$ is the first Dirichlet eigenvalue of $-\Delta$ in $\o$. Our first result says that problem \eqref{eq:problem} behaves similarly.

\begin{theorem}\label{thm:bn}
If $N\geq 4$ and $\lambda\in(-\Lambda_\o,0),$ then the problem \eqref{eq:problem} has a positive least energy solution.
\end{theorem}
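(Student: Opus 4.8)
The plan is to run a variational argument on the Nehari manifold, combined with the Brezis--Nirenberg comparison of energy levels in the spirit of \cite{bn}. Since $\lambda\in(-\Lambda_\o,0)$, definition \eqref{eq:Lambda} gives $\io u^2\le\Lambda_\o^{-1}\irn|\nabla u|^2$, so that $\|u\|_\lambda^2:=\irn|\nabla u|^2+\lambda\io u^2$ is a norm on $D^{1,2}(\rn)$ equivalent to the standard one, and weak solutions of \eqref{eq:problem} are exactly the critical points of $J(u):=\tfrac12\|u\|_\lambda^2-\tfrac1{2^*}\irn Q_\o|u|^{2^*}$. I would work on the open, nonempty set $\cU:=\{u\in D^{1,2}(\rn):\irn Q_\o|u|^{2^*}>0\}$ (it contains every nonzero $u\in D_0^{1,2}(\o)$, for which $\irn Q_\o|u|^{2^*}=\io|u|^{2^*}$) and on the Nehari set $\cN:=\{u\in\cU:\|u\|_\lambda^2=\irn Q_\o|u|^{2^*}\}$: each $u\in\cU$ has a unique positive Nehari scaling, on $\cN$ one has $J=\tfrac1N\|\cdot\|_\lambda^2$, and Sobolev's inequality together with $\irn Q_\o|u|^{2^*}\le\io|u|^{2^*}\le\irn|u|^{2^*}$ shows $\|u\|_\lambda$ is bounded away from $0$ on $\cN$; hence $c_\lambda:=\inf_\cN J>0$.

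\emph{Energy estimate (the crux).} The key point is to prove $c_\lambda<\tfrac1N S^{N/2}$, where $S$ is the best constant of $D^{1,2}(\rn)\hookrightarrow L^{2^*}(\rn)$. Here I would use that $J$ restricted to the subspace $D_0^{1,2}(\o)$ is precisely the Brezis--Nirenberg functional for \eqref{eq:classical}: since $\cN\cap D_0^{1,2}(\o)\subset\cN$ and $(-\Lambda_\o,0)\subset(-\lambda_1(\o),0)$, the classical estimate of \cite{bn} — for $N\ge4$ the negative term $\lambda\io u_\eps^2$, evaluated on a truncated Aubin--Talenti bubble $u_\eps$ supported in a small ball inside $\o$, dominates the $O(\eps^{N-2})$ error — gives $c_\lambda\le\tfrac1N(S_{\lambda,\o})^{N/2}<\tfrac1N S^{N/2}$. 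This is the only place where $N\ge4$ enters, and it is exactly the expected obstacle: for $N=3$ the error term is not beaten, so extra geometric conditions on $\o$ should be needed.

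\emph{Compactness.} Next I would show that $J$ satisfies the Palais--Smale condition at every level $c<\tfrac1N S^{N/2}$. For a $(PS)_c$ sequence $(u_n)$, boundedness in $D^{1,2}(\rn)$ is standard; passing to $u_n\rh u$, Rellich's theorem on the bounded set $\o$ gives $u_n\to u$ in $L^2(\o)$ and a.e., so $u$ is a weak solution and in particular $J(u)=\tfrac1N\|u\|_\lambda^2\ge0$. Writing $v_n:=u_n-u$, one has $\io v_n^2\to0$, and the Brezis--Lieb lemma together with $J'(u_n)u_n\to0=J'(u)u$ yields $\irn|\nabla v_n|^2\to\ell$ and $\irn Q_\o|v_n|^{2^*}\to\ell$ for some $\ell\ge0$. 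Since $\irn Q_\o|v_n|^{2^*}\le\irn|v_n|^{2^*}\le S^{-2^*/2}(\irn|\nabla v_n|^2)^{2^*/2}$, passing to the limit forces $\ell=0$ or $\ell\ge S^{N/2}$; but $c=J(u)+\tfrac1N\ell\ge\tfrac1N\ell$ rules out the second alternative, so $\ell=0$ and $u_n\to u$ in $D^{1,2}(\rn)$.

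\emph{Conclusion.} Applying Ekeland's variational principle to $J$ on the natural constraint $\cN$ produces a $(PS)_{c_\lambda}$ sequence; by the energy estimate $c_\lambda<\tfrac1N S^{N/2}$ and the compactness step it converges to some $u\in\cN$ with $J(u)=c_\lambda$, hence a least energy solution of \eqref{eq:problem}. Replacing $u$ by $|u|$, which lies in $\cN$ with $J(|u|)=J(u)=c_\lambda$ and is therefore also a least energy solution, elliptic regularity makes $|u|$ continuous, and the strong maximum principle applied to $-\Delta|u|+\big(\lambda\mathds{1}_\o+\mathds{1}_{\rn\smallsetminus\o}|u|^{2^*-2}\big)|u|=\mathds{1}_\o|u|^{2^*-1}\ge0$ (whose zeroth-order coefficient is locally bounded) gives $|u|>0$ in $\rn$. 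The main difficulty throughout is the strict inequality in the energy estimate; the remaining ingredients are by now classical.
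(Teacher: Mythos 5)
Your proposal is correct, and its overall skeleton (Nehari manifold, Brezis--Nirenberg energy estimate $c_\lambda<\frac1N S^{N/2}$ for $N\ge4$, compactness below that level, Ekeland, then $|u|$ plus the strong maximum principle) coincides with the paper's. The one genuine difference is the compactness step. The paper does not argue directly: it first proves a full Struwe-type global compactness theorem (Theorem \ref{thm:struwe}), whose proof is a blow-up analysis requiring, among other things, the exclusion of concentration at points of $\partial\o$ via the half-space nonexistence result of Corollary \ref{cor:nonexistence}; Theorem \ref{thm:bn} then follows because any bubble costs at least $\frac1N S^{N/2}$, forcing $m=0$. You instead give a self-contained first-level argument: with $v_n=u_n-u$, the weighted Brezis--Lieb identity and $J'(u_n)u_n\to0$ give $\|v_n\|^2\to\ell$ and $\irn Q_\o|v_n|^{2^*}\to\ell$, and the chain $\irn Q_\o|v_n|^{2^*}\le\irn|v_n|^{2^*}\le S^{-2^*/2}\|v_n\|^{2^*}$ yields the dichotomy $\ell=0$ or $\ell\ge S^{N/2}$, the latter excluded by $c=J(u)+\frac1N\ell<\frac1N S^{N/2}$ and $J(u)=\frac1N\|u\|_\lambda^2\ge0$. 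This is more elementary and entirely sufficient here, precisely because only levels below the first bubble are needed; the paper's heavier machinery is developed because it is reused for Theorems \ref{thm:coron} and \ref{thm:sym1}, where compactness in the window $(\frac1N S^{N/2},\frac2N S^{N/2})$ and the localization of blow-up inside $\o$ are essential. Two small points of care in your write-up: the weighted Brezis--Lieb step should be justified by the pointwise estimate $\irn\big||u_n|^{2^*}-|v_n|^{2^*}-|u|^{2^*}\big|\to0$ (so that multiplying by $Q_\o$, which is bounded by $1$, is harmless), as the paper does; and in the maximum-principle step the zeroth-order coefficient $\lambda\mathds 1_\o+\mathds 1_{\rn\smallsetminus\o}|u|^{2^*-2}$ is not nonnegative since $\lambda<0$, so one should (as the paper does) move $-\lambda\mathds 1_\o|u|$ to the right-hand side, where it is nonnegative, before applying \cite[Theorem 9.6]{gt}. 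Neither is a gap, merely a detail to make explicit.
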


For $\lambda=0$ the existence or nonexistence of solutions to the classical problem \eqref{eq:classical} depends on the geometry of the domain. It is well known that \eqref{eq:classical} has no minimum energy solution in a bounded domain $\o$, and that it does not have a nontrivial solution when $\o$ is strictly starshaped; see, for instance, \cite[Chapter III]{s}. We show that the same is true for problem \eqref{eq:problem}.

\begin{theorem}\label{thm:nonexistence}
Let $\lambda=0$. Then, the following statements hold true.
\begin{itemize}
\item[$(i)$] The problem \eqref{eq:problem} does not have a least energy solution.
\item[$(ii)$] If $\o$ is strictly starshaped, then \eqref{eq:problem} admits only the trivial solution.
\end{itemize}
\end{theorem}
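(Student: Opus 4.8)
\emph{Proof proposal.} \textbf{Part $(i)$.} The plan is to identify the least-energy level and show it is not attained. Let $J(u)=\tfrac12\irn|\nabla u|^2-\tfrac1{2^*}\irn Q_\o|u|^{2^*}$ be the energy associated with \eqref{eq:problem} for $\lambda=0$, let $\cN_\o:=\{u\in D^{1,2}(\rn)\smallsetminus\{0\}:J'(u)u=0\}$ be its Nehari manifold, and put $c_\o:=\inf_{\cN_\o}J$. Every nontrivial solution of \eqref{eq:problem} lies in $\cN_\o$, and on $\cN_\o$ one has $J(u)=\tfrac1N\irn|\nabla u|^2$. The first step is to prove $c_\o=\tfrac1N S^{N/2}$, with $S$ the best Sobolev constant. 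For the lower bound, if $u\in\cN_\o$ then $\irn|\nabla u|^2=\irn Q_\o|u|^{2^*}\le\irn|u|^{2^*}$, while Sobolev's inequality gives $\irn|\nabla u|^2\ge S\bigl(\irn|u|^{2^*}\bigr)^{2/2^*}$; combining these forces $\irn|\nabla u|^2\ge S^{N/2}$. For the upper bound, one inserts into $J$, after projecting onto $\cN_\o$, the rescaled Aubin--Talenti bubbles $U_{\eps,y}$ centered at a fixed point $y\in\o$: since $\o$ is open, $\int_{\rn\smallsetminus\o}|U_{\eps,y}|^{2^*}\to0$ as $\eps\to0$, hence $\irn Q_\o|U_{\eps,y}|^{2^*}\to S^{N/2}$, the Nehari projection scalar tends to $1$, and the corresponding energy tends to $\tfrac1N S^{N/2}$. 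Finally, suppose some $u\in\cN_\o$ attained $c_\o$. Then $\irn|\nabla u|^2=S^{N/2}$, which forces equality in Sobolev's inequality together with $\irn Q_\o|u|^{2^*}=\irn|u|^{2^*}$. The first makes $u$ equal, up to a nonzero constant, a translation and a dilation, to the Aubin--Talenti instanton, which vanishes nowhere in $\rn$; the second gives $2\int_{\rn\smallsetminus\o}|u|^{2^*}=\irn(1-Q_\o)|u|^{2^*}=0$, i.e. $u\equiv0$ on the nonempty open set $\rn\smallsetminus\overline\o$ --- a contradiction. Thus $c_\o$ is not attained, and since every nontrivial solution lies in $\cN_\o$ the same rigidity shows its energy is strictly larger than $c_\o$; hence no solution realizes the least-energy level, so \eqref{eq:problem} has no least energy solution.

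\textbf{Part $(ii)$.} The plan is a Pohozaev argument performed separately on $\o$ and on $\rn\smallsetminus\overline\o$. First, regularity: a Brezis--Kato/Moser iteration gives $u\in L^\infty_{\mathrm{loc}}(\rn)$, so the right-hand side of \eqref{eq:problem} with $\lambda=0$ lies in every $L^p_{\mathrm{loc}}$, whence $u\in W^{2,p}_{\mathrm{loc}}(\rn)\cap C^{1,\alpha}(\rn)$, and $u$ is smooth in $\o$ and in $\rn\smallsetminus\overline\o$. Writing the Rellich--Pohozaev identity for $-\Delta u=|u|^{2^*-2}u$ on $\o$ and for $-\Delta u=-|u|^{2^*-2}u$ on $\rn\smallsetminus\overline\o$ (with outward normal $-\nu$ there, $\nu$ the outward normal of $\o$, and discarding the contributions on large spheres via a cutoff exhaustion --- this uses only $|\nabla u|\in L^2$ and $u\in L^{2^*}$), each identity relates bulk integrals of $|\nabla u|^2$ and $|u|^{2^*}$ to boundary integrals over $\partial\o$ of $(x\cdot\nu)|\nabla u|^2$, $(x\cdot\nabla u)\partial_\nu u$ and $(x\cdot\nu)|u|^{2^*}$. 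Adding the two identities, the first two families of boundary terms cancel because $u\in C^1$ across $\partial\o$, the bulk terms assemble (the sign change of the nonlinearity across $\partial\o$ being precisely what turns the two separate $L^{2^*}$ integrals into one integral against $Q_\o$), and one obtains
\begin{equation*}
-\tfrac{N-2}{2}\Bigl(\irn|\nabla u|^2-\irn Q_\o|u|^{2^*}\Bigr)=\tfrac{2}{2^*}\int_{\partial\o}(x\cdot\nu)\,|u|^{2^*}\,\d\sigma .
\end{equation*}
Testing \eqref{eq:problem} with $u$ gives $\irn|\nabla u|^2=\irn Q_\o|u|^{2^*}$, so the left side vanishes and $\int_{\partial\o}(x\cdot\nu)\,|u|^{2^*}\,\d\sigma=0$.

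Assume now that $\o$ is strictly starshaped, say with respect to the origin, so $x\cdot\nu(x)>0$ for every $x\in\partial\o$; the identity just proved then forces $u=0$ on $\partial\o$. On $\o$ the function $u$ solves the critical Dirichlet problem $-\Delta u=|u|^{2^*-2}u$, $u\in H^1_0(\o)$, and since $\o$ is strictly starshaped the classical Pohozaev nonexistence theorem (see, e.g., \cite[Chapter III]{s}) yields $u\equiv0$ in $\o$. On $\rn\smallsetminus\overline\o$ one tests \eqref{eq:problem} with $u\,\mathds{1}_{\rn\smallsetminus\overline\o}$ --- which belongs to $D^{1,2}(\rn)$ precisely because $u=0$ on $\partial\o$ --- to obtain $\int_{\rn\smallsetminus\overline\o}|\nabla u|^2+\int_{\rn\smallsetminus\overline\o}|u|^{2^*}=0$, so $u\equiv0$ there as well; together with $u=0$ on $\partial\o$ this gives $u\equiv0$ in $\rn$. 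The only real subtleties I anticipate are, in $(i)$, invoking the classification of extremals of the Sobolev inequality to get the rigidity, and, in $(ii)$, making the cutoff exhaustion rigorous and checking that the $C^{1,\alpha}$-regularity of $u$ across $\partial\o$ suffices for the matching of boundary integrals; the remainder is the standard Pohozaev bookkeeping.
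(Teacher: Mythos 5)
Your proposal is correct and follows essentially the same route as the paper: part $(i)$ by showing the least-energy level equals $\tfrac1N S^{N/2}$ and cannot be attained because attainment would force both equality in the Sobolev inequality and $u\equiv 0$ on $\rn\smallsetminus\overline\o$; part $(ii)$ by a two-region Poho\v{z}aev identity whose gradient boundary terms cancel by $C^1$-matching across $\partial\o$, yielding $\int_{\partial\o}(x\cdot\nu)|u|^{2^*}=0$, hence $u=0$ on $\partial\o$, then the classical starshaped nonexistence result in $\o$ and energy testing outside. The only cosmetic differences are that the paper rules out attainment via the maximum principle rather than the classification of Sobolev extremals, and proves the Poho\v{z}aev identity for a general nonlinearity and possibly unbounded region (which it reuses later for a half-space).
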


There are several existence results for the classical problem \eqref{eq:classical} with $\lambda=0$ in bounded domains with nontrivial topology. One of the first is the result of Coron \cite{c} who showed that in a bounded domain with a sufficiently small point puncture, this problem is solvable.
Our next result considers more general holes, obtained by removing a sufficiently thin neighborhood of a closed submanifold. It asserts the existence of multiple solutions, whose number depends on the topology of the manifold. To state this result we recall the classical notion of cup-length of a topological space and we introduce some notation.

Let $\tilde H^*(\,\cdot\,;\z2)$ be reduced singular cohomology with $\z2$-coefficients. The \emph{cup-length of $X$} is the smallest number $m$ such that the cup product of any $m$ cohomology classes in $\tilde H^*(X;\z2)$ is equal to zero. If $X$ is a subset of $\rn$ and $0<r\leq R$ we write
\begin{align*}
\bar B_R X:=\{x\in\rn:\dist(x,X)\leq R\}\qquad\text{and}\qquad A_{r,R}X:=\{x\in\rn:r\leq \dist(x,X)\leq R\}.
\end{align*}
Let $S$ be the best Sobolev constant for the embedding of $D^{1,2}(\rn)$ into $L^{2^*}(\rn)$.

\begin{theorem}\label{thm:coron}
Let $M$ be a closed (i.e., compact and without boundary) smooth submanifold of $\rn$ and $\bar B_RM$ be a closed tubular neighborhood of $M$. Then, for any given $b\in(\frac{1}{N}S^\frac{N}{2},\frac{2}{N}S^\frac{N}{2})$, there exists $r\in(0,R)$ such that, if $\o$ satisfies
\begin{equation}\label{eq:assumptions}
M\cap\overline{\o}=\emptyset\qquad\text{and}\qquad A_{r,R}M\subset\o,
\end{equation}
then the problem \eqref{eq:problem} with $\lambda=0$ has at least 
$$\cupl(M)$$
positive solutions whose energy is in $(\frac{1}{N}S^\frac{N}{2},b]$. In particular, \eqref{eq:problem} has at least one positive solution. 
\end{theorem}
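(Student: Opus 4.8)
Here is how I would approach the proof of Theorem~\ref{thm:coron}.

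\smallskip

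\emph{Setup.} I would replace the nonlinearity by $Q_\o(u^+)^{2^*-1}$ and work with $J(u)=\tfrac12\irn|\nabla u|^2-\tfrac1{2^*}\irn Q_\o(u^+)^{2^*}$ on $D^{1,2}(\rn)$; testing $J'(u)=0$ against $u^-$ gives $u\ge 0$, hence $u>0$ by the strong maximum principle, so the nontrivial critical points of $J$ are exactly the positive solutions of \eqref{eq:problem} with $\lambda=0$. On the Nehari set $\cN=\{u\neq 0:\io(u^+)^{2^*}>\int_{\rn\smallsetminus\o}(u^+)^{2^*},\ J'(u)u=0\}$ the Sobolev inequality gives $J|_\cN\ge c_\infty:=\tfrac1N S^{N/2}$; concentrating bubbles inside $\o$ show $\inf_\cN J=c_\infty$, and by Theorem~\ref{thm:nonexistence}$(i)$ this infimum is \emph{not} attained. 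So the whole issue is to produce critical values strictly above $c_\infty$.

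\smallskip

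\emph{Compactness.} The analytic core is that $J|_\cN$ satisfies the Palais--Smale condition at every level in $(c_\infty,2c_\infty)$. I would prove this via a Struwe-type global-compactness decomposition: a Palais--Smale sequence splits as a weak limit $u_0$ (a solution of the equation) plus finitely many rescaled bubbles concentrating at points of $\rn\cup\{\infty\}$. Since $-\Delta u=-|u|^{2^*-2}u$ has no nontrivial $D^{1,2}(\rn)$-solution, no bubble can sit in $\rn\smallsetminus\overline\o$ or at infinity; and a Pohozaev identity, computed with the dilation field centered \emph{off} the limiting tangent hyperplane, forces the trace on $\{x_N=0\}$ of any $D^{1,2}$-solution of $-\Delta u=\operatorname{sgn}(x_N)|u|^{2^*-2}u$ to vanish, whence that half-space problem is trivial and no bubble sits on $\partial\o$. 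Thus every bubble is an Aubin--Talenti bubble concentrating in $\o$, of energy exactly $c_\infty$, while $u_0$ is $0$ or a solution of energy $\ge c_\infty$; the energy identity then rules out any loss of compactness in $(c_\infty,2c_\infty)$.

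\smallskip

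\emph{The two maps and the cup-length estimate.} Fix $b\in(c_\infty,2c_\infty)$. For small $\eps>0$ define $\phi_\eps:A_{r,R}M\to\cN$ sending $y$ to the Nehari projection of a bubble centered at $y$; since $A_{r,R}M\subset\o$ one has $\sup_y J(\phi_\eps(y))\to c_\infty$, so I fix $\eps$ small (imposing the corresponding smallness on $r$) so that $c_1:=\sup_y J(\phi_\eps(y))\le b$. Dually, for $c_1$ close to $c_\infty$ every $u\in\{J\le c_1\}\cap\cN$ is a near-optimizer of the Sobolev inequality concentrated near one point of $\o$, so a barycenter map $\beta:\{J\le c_1\}\cap\cN\to\rn$ is well defined, continuous, takes values in $\rn\smallsetminus\bar B_{\rho_0}M$ for a small $\rho_0>0$ (no concentration occurs in $\bar B_{\rho_0}M$, where $Q_\o\equiv-1$), and $\beta\circ\phi_\eps$ is homotopic in $\rn\smallsetminus M\simeq\rn\smallsetminus\bar B_{\rho_0}M$ to the inclusion $A_{r,R}M\hookrightarrow\rn\smallsetminus M$. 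Combining this with the nearest-point retraction $\pi:\bar B_RM\to M$ and a Mayer--Vietoris/Gysin analysis of $\rn=\bar B_RM\cup(\rn\smallsetminus\bar B_{\rho_0}M)$ --- where one uses that the normal bundle of $M\subset\rn$ has vanishing top Stiefel--Whitney class --- I would conclude that $\phi_\eps^*$ has image in $H^*(A_{r,R}M;\z2)$ of $\z2$-cup-length at least $\cupl(M)$; hence $\cupl(\{J\le c_1\}\cap\cN)\ge\cupl(M)$, while $\{J\le c_\infty\}\cap\cN=\emptyset$.

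\smallskip

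\emph{Conclusion and main obstacle.} With the cohomology classes produced above, the standard cup-length min-max over sublevel sets of $J|_\cN$ yields values $c_\infty<c_{(1)}\le\cdots\le c_{(\cupl(M))}\le c_1\le b$; by the compactness step each is a critical value (and coincidences force extra critical points), so $J$ has at least $\cupl(M)$ critical points with energy in $(c_\infty,b]=(\tfrac1N S^{N/2},b]$, i.e.\ at least $\cupl(M)$ positive solutions of \eqref{eq:problem}, and at least one since $\cupl(M)\ge 1$. The step I expect to be the main obstacle is the compactness lemma --- precisely the exclusion of concentration along $\partial\o$, where the sharp change of sign of $Q_\o$ makes the situation genuinely different from the classical Brezis--Nirenberg/Coron problems --- together with the delicate bookkeeping of the preceding paragraph: the choice of $r$ versus $\eps$, the continuity and target of $\beta$, and verifying that the min-max values stay strictly above the non-attained level $c_\infty$, which is exactly the role played by the topology of $M$ through the barycenter $\beta$.
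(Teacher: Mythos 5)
Your overall architecture---a Struwe-type decomposition in which bubbles can only concentrate in $\o$ (with concentration on $\partial\o$ excluded via a Poho\v{z}aev identity for the half-space problem, exactly as in Corollary \ref{cor:nonexistence} and Step 3 of Lemma \ref{lem:struwe}), a barycenter map on low sublevel sets, test bubbles parametrized by points near $M$, and a Lusternik--Schnirelmann/cup-length count---is the same as the paper's, and the compactness part of your sketch is essentially correct in outline.

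The genuine gap is in the topological step. You define the test map $\phi_\eps$ only on the annular region $A_{r,R}M\simeq S_RM$ and try to extract $\cupl(M)$ from \emph{absolute} cohomology classes of the sublevel set obtained by pulling back along $\beta$ from $\rn\smallsetminus M$. This cannot give the claimed bound: applying Mayer--Vietoris to $\rn=\bar B_RM\cup(\rn\smallsetminus \mathring B_{R'}M)$ with $R'<R$, for every $j\geq 1$ the map $H^j(\bar B_RM;\z2)\oplus H^j(\rn\smallsetminus M;\z2)\to H^j(S_RM;\z2)$ given by the difference of restrictions is an isomorphism, so the image of $H^j(\rn\smallsetminus M;\z2)$ in $H^j(S_RM;\z2)$ meets $\pi^*(\tilde H^j(M;\z2))$ only in $0$. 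The classes that carry the cup-length of $M$ are therefore invisible to the composition $\beta\circ\phi_\eps$: for instance, for $M\approx\s1\times\s1$ in $\r^3$ one has $\tilde H^2(\r^3\smallsetminus M;\z2)=0$, so every product of two classes pulled back from $\rn\smallsetminus M$ vanishes and your scheme detects at most $2$ solutions instead of $\cupl(M)=3$. Moreover, even granting enough classes, the first value of an absolute cup-length min-max is $\inf_{\cN_0}J_0=\frac1NS^{N/2}$, which is not attained, so one critical point is lost and you must still prove that the remaining min-max values lie strictly above that level. The paper resolves both issues at once by working with pairs: Lemma \ref{lem:r} extends the test map to \emph{all} of $\bar B_RM$, tuning the concentration parameter so that the energy drops below a level $a$ when $\xi\in S_RM$ while staying below $b$ when $\xi$ is near $M$ (where $Q_\o=-1$; this is what forces $r$ to be small), yielding maps of pairs $(\bar B_RM,S_RM)\to(\cN_0^{\leq b},\cN_0^{\leq a})\to(\rn,\rn\smallsetminus M)$ whose composition is the inclusion. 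The relative Lusternik--Schnirelmann category of $(\cN_0^{\leq b},\cN_0^{\leq a})$ then counts critical points in $[a,b]$, where Palais--Smale holds, and the Thom isomorphism applied to the \emph{relative} group $H^*(\rn,\rn\smallsetminus M;\z2)\cong H^*(\bar B_RM,S_RM;\z2)$ recovers the full lower bound $\cupl(M)$. Your remark that the normal bundle has vanishing top mod-$2$ class is true, but it enters the correct argument only through the Thom class, not through a Gysin splitting of the absolute cohomology of $S_RM$.
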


Let us give some examples.

\begin{examples} \label{examples_coron}
Let $\lambda=0$.
\begin{enumerate}
\item[$(a)$] If $M$ is a point, the assumptions \eqref{eq:assumptions} coincide with those given by Coron in \emph{\cite[Theorem 1]{c}} for the classical problem \eqref{eq:classical}. The reduced cohomology of a point is trivial, so $\cupl(M)=1$ in this case; see \emph{Figure \ref{fig:sub1}}.
\item[$(b)$] If $M$ is homeomorphic to the unit circle $\s1\subset \r^2$, its reduced cohomology is $\tilde H^j(M;\z2)=\z2$ if $j=1$ and $\tilde H^j(M;\z2)=0$ otherwise, so $\cupl(M)=2$ and \emph{Theorem \ref{thm:coron}} yields at least $2$ positive solutions to problem \eqref{eq:problem}; see \emph{Figure \ref{fig:sub2}}.
\item[$(c)$] If $M$ is homeomorphic to $\s1\times\s1$, its reduced cohomology is $\tilde H^1(M;\z2)=\z2\oplus\z2$, $\tilde H^2(M;\z2)=\z2$ and $\tilde H^j(M;\z2)=0$ if $j\neq 1,2$. The product of the two generators of $\tilde H^1(M;\z2)$ is the generator of $\tilde H^2(M;\z2)$, so $\cupl(M)=3$ and \emph{Theorem \ref{thm:coron}} yields at least $3$ positive solutions to problem \eqref{eq:problem}; see \emph{Figure \ref{fig:sub3}}.
\item[$(d)$] More generally, if $M$ is homeomorphic to the unit sphere $\mathbb{S}^n\subset\r^{n+1}$, its reduced cohomology is $\tilde H^j(M;\z2)=\z2$ if $j=n$ and $\tilde H^j(M;\z2)=0$ otherwise, so $\cupl(M)=2$ and \emph{Theorem \ref{thm:coron}} yields at least $2$ positive solutions to problem \eqref{eq:problem}. If $M$ is homeomorphic to a product of $m$ spheres $\mathbb{S}^{n_1}\times\cdots\times\mathbb{S}^{n_m}$, then $\cupl(M)=m+1$ and problem \eqref{eq:problem} has at least $m+1$ positive solutions. See \emph{Figure \ref{fig:sub4}}.
\end{enumerate}
\end{examples}

\begin{figure}[htbp]
    \centering
    \begin{subfigure}[b]{0.3\textwidth}
        \centering
        \includegraphics[width=0.8\textwidth]{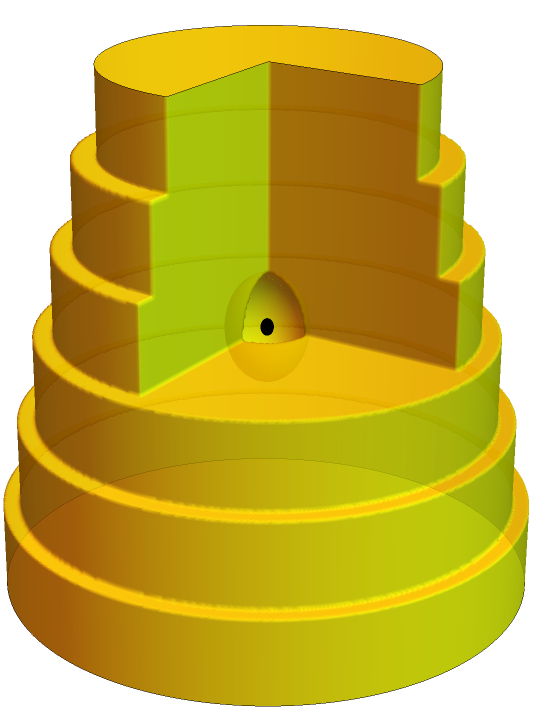}
        \caption{$M=\{0\}$.}
        \label{fig:sub1}
    \end{subfigure}
    \begin{subfigure}[b]{0.6\textwidth}
        \centering
        \includegraphics[width=0.45\textwidth]{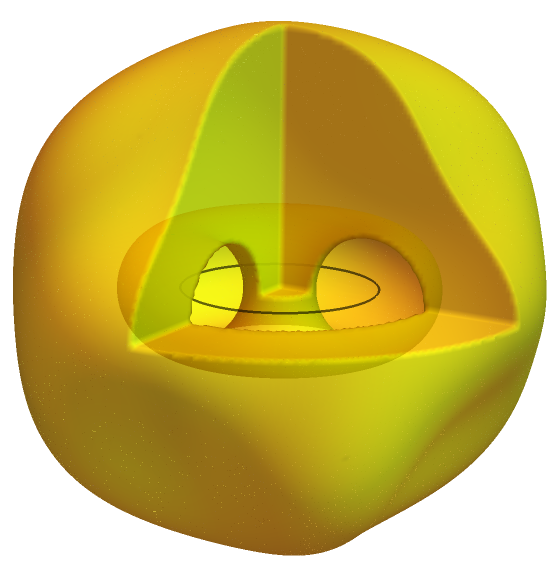}
        \includegraphics[width=0.45\textwidth]{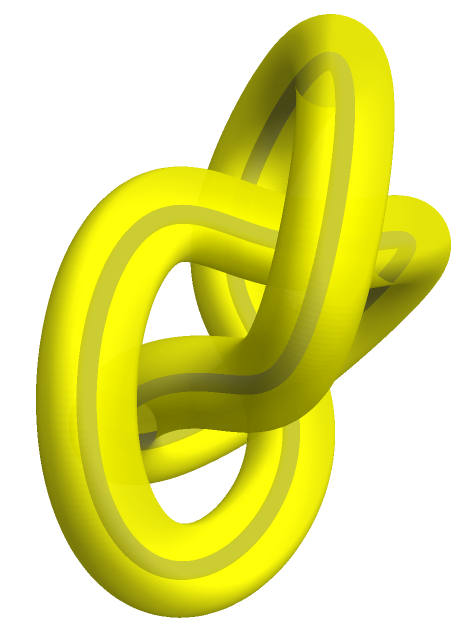}
        \caption{$M\approx\s1$. A simple closed curve is homeomorphic to $\s1$.}
        \label{fig:sub2}
    \end{subfigure}
    \begin{subfigure}[b]{0.4\textwidth}
        \centering
        \includegraphics[width=\textwidth]{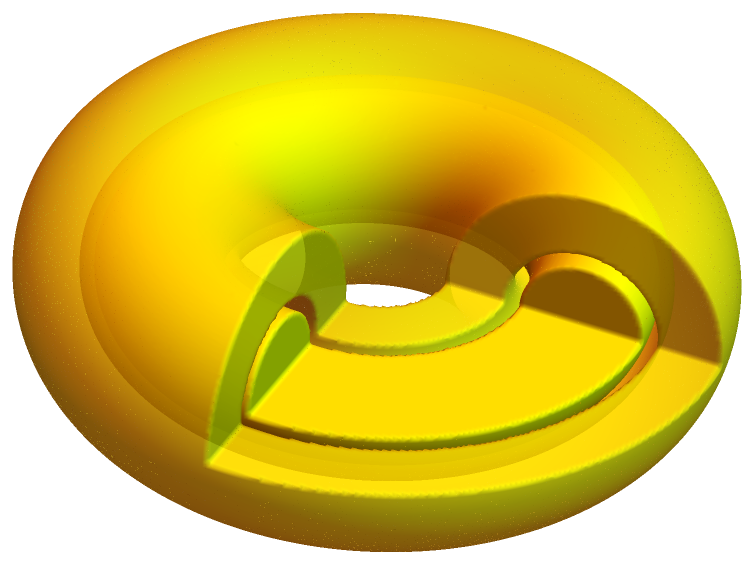}
        \caption{$M\approx\mathbb{S}^1\times \mathbb{S}^1$.}
        \label{fig:sub3}
    \end{subfigure}
    \qquad
    \begin{subfigure}[b]{0.33\textwidth}
        \centering
        \includegraphics[width=\textwidth]{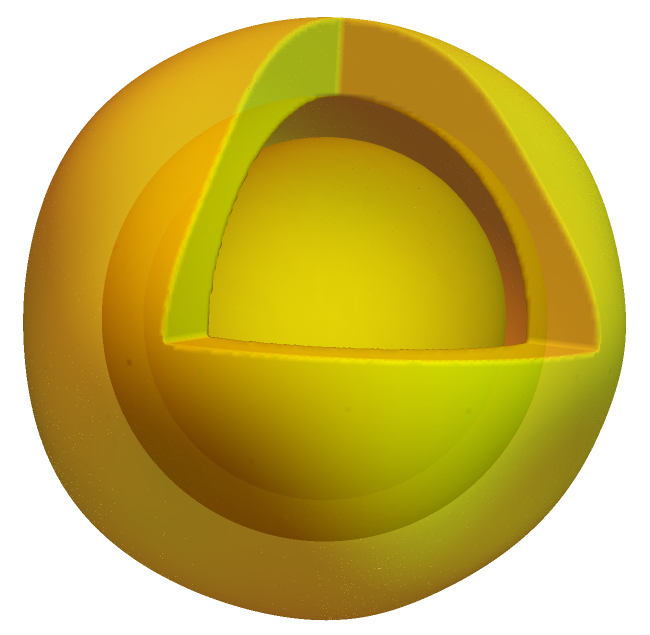}
        \caption{$M\approx\mathbb{S}^2$.}
        \label{fig:sub4}
    \end{subfigure}
    \caption{
     Some examples of three-dimensional domains satisfying \eqref{eq:assumptions} for different manifolds $M$. A sectional cut is presented to see the inner holes.}
    \label{figs}
\end{figure}
\medskip

As in the classical case \eqref{eq:classical}, symmetries influence the number of solutions to the problem \eqref{eq:problem}. Several symmetric results that hold for \eqref{eq:classical} have an analogous version for \eqref{eq:problem}. Our next result illustrates this fact.

Let \( O(N) \) be the group of linear isometries of \( \mathbb{R}^N \). If \( G \) is a closed subgroup of \( O(N) \), we define the \( G \)-orbit of a point \( x \in \mathbb{R}^N \) as
\[
Gx := \{ gx : g \in G \},
\]
and denote its cardinality by \( \#Gx \). Recall that \( \Omega \subset \mathbb{R}^N \) is called \( G \)-invariant if \( Gx \subset \Omega \) for all \( x \in \Omega \), and a function \( u : \mathbb{R}^N \to \mathbb{R} \) is \( G \)-invariant if it is constant on every \( G \)-orbit \( Gx \).

\begin{theorem}\label{thm:sym1}
Let $\lambda\in (-\Lambda_\Omega,0].$ If $\Omega$ is $G$-invariant and if $\#Gx = \infty$ for all $x \in \overline{\o}$, then the problem \eqref{eq:problem} has infinitely many $G$-invariant solutions.
\end{theorem}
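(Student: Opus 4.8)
The plan is to work in the space of $G$-invariant functions $D^{1,2}(\rn)^G:=\{u\in D^{1,2}(\rn):u\text{ is }G\text{-invariant}\}$ and apply the principle of symmetric criticality of Palais, so that $G$-invariant critical points of the energy functional
\[
J_\lambda(u):=\tfrac12\irn\bigl(|\nabla u|^2+\lambda\mathds{1}_\o u^2\bigr)-\tfrac{1}{2^*}\irn Q_\o(x)|u|^{2^*}
\]
restricted to $D^{1,2}(\rn)^G$ are genuine solutions of \eqref{eq:problem}. Since $\lambda\in(-\Lambda_\o,0]$, the quadratic form $u\mapsto\irn(|\nabla u|^2+\lambda\mathds{1}_\o u^2)$ is positive definite and equivalent to the $D^{1,2}$-norm, so $J_\lambda$ has a mountain-pass/Nehari geometry on the symmetric subspace. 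The key point that makes the symmetric problem tractable where the full problem may fail is a \emph{compactness improvement}: because $\#Gx=\infty$ for every $x\in\overline\o$, any Palais--Smale sequence in $D^{1,2}(\rn)^G$ that could lose compactness would have to concentrate at a point $x_0$; if $x_0\in\overline\o$ its infinitely many translates under $G$ would each carry a full bubble of energy, contradicting boundedness, and if $x_0\notin\overline\o$ then near $x_0$ the nonlinearity has the ``wrong'' sign ($Q_\o=-1$), which precludes positive-energy concentration there. Hence $J_\lambda$ restricted to $D^{1,2}(\rn)^G$ satisfies the Palais--Smale condition at \emph{every} positive energy level.

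With global compactness in hand, the first step is to run a fountain-type / $\z2$-equivariant minimax argument. Concretely, restrict further to the $\z2$-action $u\mapsto -u$ on the (infinite-dimensional) Banach space $D^{1,2}(\rn)^G$; since the admissible domain $\o$ is unbounded in the $G$-orbit sense but $D^{1,2}(\rn)^G$ is genuinely infinite-dimensional, one produces an unbounded sequence of critical values $c_1\le c_2\le\cdots\to\infty$ by a standard symmetric mountain-pass theorem (e.g. the Clark/Ambrosetti--Rabinowitz $\z2$ minimax), using the genus of symmetric sets as the topological index. The even symmetry of $J_\lambda$ is automatic. This yields infinitely many (sign-changing in general) $G$-invariant solutions; the statement of the theorem only claims ``infinitely many,'' so this already suffices, but the numbers $c_k\to\infty$ give distinct solutions.

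The remaining technical steps, in order, are: (1) verify $J_\lambda\in C^1(D^{1,2}(\rn)^G)$ and has the symmetric mountain-pass geometry, i.e. $J_\lambda\ge\alpha>0$ on a small sphere and $J_\lambda\to-\infty$ on every finite-dimensional subspace (here one uses that on any finite-dimensional subspace the positive contribution $\io|u|^{2^*}$ dominates, which is where the region $\o$ having infinite $G$-orbits — hence $D^{1,2}(\rn)^G\cap\{u:\supp u\subset\o'\}$ nontrivial for suitable $\o'$ — is needed to get an unbounded index); (2) prove the Palais--Smale condition on $D^{1,2}(\rn)^G$ at all levels by the concentration-compactness dichotomy sketched above, carefully ruling out concentration using the $G$-orbit cardinality exactly as in the classical symmetric Brezis--Nirenberg arguments; (3) apply the $\z2$-symmetric minimax theorem to extract $c_k\to\infty$; (4) invoke symmetric criticality to conclude the critical points solve \eqref{eq:problem} in $\rn$.

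\textbf{Main obstacle.} The delicate step is (2), the Palais--Smale analysis at arbitrarily high energy levels. One must control possible concentration on the ``bad'' set $\rn\smallsetminus\overline\o$ where $Q_\o=-1$: there, the usual bubble profiles are not solutions, but one must still exclude that a PS sequence develops oscillatory concentration or spreads mass to infinity. The clean way is to show that any nonzero weak limit is itself a solution and that the energy defect is quantized into bubbles, each of which must sit at a point of $\overline\o$ with infinite orbit — impossible for a bounded sequence. Making the ``infinite orbit $\Rightarrow$ infinitely many disjoint bubbles $\Rightarrow$ infinite energy'' argument rigorous (choosing disjoint small balls around a large but finite set of orbit points and summing the lower energy bounds) is the heart of the proof and the place where the hypothesis $\#Gx=\infty$ on $\overline\o$ is essential.
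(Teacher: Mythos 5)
Your setup (symmetric criticality) and your compactness step are essentially the paper's: the restoration of the Palais--Smale condition in $D^{1,2}(\rn)^G$ follows from the global compactness analysis of Section \ref{sec:compactness} (which shows that bubbling can only occur at points $\xi_k\in\o$, with the boundary and exterior cases excluded via the rescaling argument and the Poho\v{z}aev-based nonexistence result) together with the observation that an orbit of infinite cardinality in $\overline{\o}$ produces arbitrarily many pairwise disjoint balls each carrying a fixed amount of $L^{2^*}$-mass, contradicting boundedness. This is exactly Lemma \ref{lem:symmetric_ps}.

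The gap is in your minimax steps (1) and (3). The geometry you assert --- ``$J_\lambda\to-\infty$ on every finite-dimensional subspace'' --- is false here, precisely because of the sign change in the nonlinearity: if $u\in D^{1,2}(\rn)^G$ is supported in $\rn\smallsetminus\overline{\o}$ (such $G$-invariant functions exist, since $\o$ is bounded), then $\irn Q_\o|u|^{2^*}=-\irn|u|^{2^*}<0$ and $J_\lambda(tu)=\frac{t^2}{2}\|u\|^2+\frac{t^{2^*}}{2^*}\irn|u|^{2^*}\to+\infty$ as $t\to\infty$. Hence hypothesis $(I_5)$ of the Ambrosetti--Rabinowitz $\z2$-theorem fails on $D^{1,2}(\rn)^G$, and you cannot simply restrict $J_\lambda$ to subspaces of functions supported in $\o$, since critical points of such a restriction need not be critical points of $J_\lambda$. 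The fountain-theorem variant is equally problematic: the condition $b_k\to\infty$ is normally derived from compactness of the relevant embedding, which is unavailable at the critical exponent, so your claim that the critical values are unbounded is unjustified (and unnecessary for the statement). The paper circumvents all of this by working on the Nehari manifold $\cN_\lambda^G$, on which $J_\lambda$ is even, bounded below by a positive constant, and satisfies the Palais--Smale condition; the conclusion then follows from Lusternik--Schnirelmann theory for even functionals (\cite[Theorem II.5.7]{s}) once one shows that $\mathrm{genus}(\cN_\lambda^G)=\infty$. This last point is obtained by exhibiting an odd continuous map from the infinite-dimensional unit sphere of $D_0^{1,2}(\o)^G$ into $\cN_\lambda^G$: functions supported in $\o$ do satisfy $\irn Q_\o|u|^{2^*}=\io|u|^{2^*}>0$ and therefore admit a radial projection onto the Nehari manifold. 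You should replace your steps (1) and (3) by this (or an equivalent) argument.
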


Note that \emph{only the $G$-orbits in $\overline{\o}$} are required to be infinite.

\begin{corollary} \label{cor:annuli}
If $\o$ is a finite disjoint union of annuli, i.e.,
$$\o:=\bigcup_{i=1}^n\{x\in\rn:a_i<|x|<b_i\},\qquad 0<a_1<b_1<a_2<\cdots<a_n<b_n$$
for some $n\in\mathbb N,$ then \eqref{eq:problem} has infinitely many radial solutions.
\end{corollary}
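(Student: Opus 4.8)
The plan is to deduce this from Theorem~\ref{thm:sym1} by taking $G=O(N)$, the full group of linear isometries of $\rn$. Three things must be checked: that $\o$ is $O(N)$-invariant, that every $O(N)$-orbit meeting $\overline{\o}$ is infinite, and that the resulting $O(N)$-invariant solutions are precisely the radial ones.

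First, since each annulus $\{x\in\rn:a_i<|x|<b_i\}$ is defined by a condition on $|x|$ alone and $|gx|=|x|$ for every $g\in O(N)$, the set $\o$ is $O(N)$-invariant, and hence so is its closure $\overline{\o}=\bigcup_{i=1}^n\{x\in\rn:a_i\le|x|\le b_i\}$. Second, the $O(N)$-orbit of a point $x\in\rn$ is the sphere $\{y\in\rn:|y|=|x|\}$ of radius $|x|$ centered at the origin, which contains infinitely many points whenever $x\neq 0$ (recall $N\geq 3$). Because $a_1>0$, every $x\in\overline{\o}$ satisfies $|x|\geq a_1>0$, so $\#O(N)x=\infty$ for all $x\in\overline{\o}$.

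Consequently the hypotheses of Theorem~\ref{thm:sym1} hold for every admissible $\lambda\in(-\Lambda_\o,0]$, and that theorem produces infinitely many $O(N)$-invariant solutions of \eqref{eq:problem}. Since a function on $\rn$ is $O(N)$-invariant if and only if it is constant on spheres centered at the origin, i.e., radial, these are infinitely many radial solutions, which is the claim.

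As for difficulty, this is essentially a direct specialization of Theorem~\ref{thm:sym1}, so there is no substantial obstacle; the only place where the hypotheses of the corollary are genuinely used is the condition $a_1>0$, which ensures that the origin---the unique point of $\rn$ whose $O(N)$-orbit is finite (indeed a singleton)---does not belong to $\overline{\o}$. Were the innermost region a ball rather than an annulus, one could not invoke Theorem~\ref{thm:sym1} with $G=O(N)$, and a different argument (or a proper subgroup of $O(N)$ acting freely enough on $\overline{\o}$) would be needed.
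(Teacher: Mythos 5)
Your proposal is correct and is exactly the argument the paper intends: Corollary \ref{cor:annuli} is stated as an immediate consequence of Theorem \ref{thm:sym1} with $G=O(N)$, using that every point of $\overline{\o}$ has norm at least $a_1>0$ and hence an infinite orbit (a sphere), and that $O(N)$-invariant means radial. Your additional observation about why the origin must be excluded is consistent with the paper's own discussion following the corollary.
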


This contrasts with the behavior of the problem
$$-\Delta u=|u|^{2^*-2}u,\qquad u\in D^{1,2}(\rn),$$
which has only one radial solution.

It is not enough that $\o$ be radial for Corollary \ref{cor:annuli} to be true. The statement is certainly not true if $\o$ is a ball, as Theorem \ref{thm:nonexistence} shows. But what if $\o$ is the disjoint union of a ball, centered at the origin, and an annulus? Does \eqref{eq:problem} have a solution? If the gap between the ball and the annulus is thin enough, the answer is yes, as shown in Example \ref{examples_coron}$(d)$. The general case seems to be much harder. We leave it as an open question.

\begin{question}
Assume that $\o$ is the disjoint union of a ball, centered at the origin, and an annulus. Does problem \eqref{eq:problem} have a solution?
\end{question}

To show that problem \eqref{eq:problem} has no nontrivial solution when $\o$ is strictly starshaped, we derive a Poho\v{z}aev identity, that holds true for a more general problem; see Theorem \ref{thm:pohozhaev}.

On the other hand, the proof of the existence results relies on a delicate analysis of the loss of compactness of the variational functional associated with the problem \eqref{eq:problem}. We show that blow-up can occur only at points in $\o$. This fact is responsible for the similarities in the behavior of \eqref{eq:problem} with that of the classical critical equation \eqref{eq:classical} in a bounded domain; see Theorem \ref{thm:struwe}.

As a consequence of this analysis, and using the well-known Brezis-Nirenberg estimates, we obtain Theorem \ref{thm:bn} by minimizing the energy. Theorem \ref{thm:struwe} also allows us to conclude that symmetries can help to restore compactness and obtain multiplicity of solutions results, such as the one proposed in Theorem \ref{thm:sym1}.

Theorem \ref{thm:coron} is more delicate because, as Theorem \ref{thm:nonexistence} shows, the minimal energy is not attained if $\lambda=0$. However, Theorem \ref{thm:struwe} gives an energy interval in which compactness holds, and we use the thin enough $M$-shaped hole in the domain $\o$ to produce nontrivial topology in an interval $(a,b)$ within those levels. More precisely, using the fact that blow-up can occur only at points in $\o$, we are able to construct maps of pairs
$$(\bar B_RM,\partial(\bar B_RM))\to (\cN_0^{\leq b},\cN_0^{\leq a})\to(\rn,\rn\smallsetminus M)$$
whose composition is the inclusion, where $\cN_0^{\leq c}$ is the sublevel set for $c$ in the Nehari manifold $\cN_0$. As a consequence, the number of critical points of the variational functional is bounded from below by the Lusternik-Schnirelmann category of the inclusion of pairs; see Theorem \ref{thm:coron_ls}. Then, using Thom's isomorphism, we show that the category of this inclusion is bounded from below by the cup-length of $M$.

When $M$ is a single point, the profile and blow-up rate of the solution as the hole shrinks are described in \cite{cps}. There, solutions are also constructed in the slightly subcritical case using the Lyapunov-Schmidt reduction procedure. A result related to Theorem \ref{thm:coron} for the classical problem \eqref{eq:classical} is given in \cite{cgp}.

The paper is organized as follows. The variational setting is described in Section \ref{sec:setting}. In Section \ref{sec:pohozhaev} we derive a Poho\v{z}aev identity for problem \eqref{eq:problem} and we prove Theorem \ref{thm:nonexistence}. In Section \ref{sec:compactness} we analyze the loss of compactness and we prove Theorem \ref{thm:bn}. Section \ref{sec:coron} is devoted to the proof of Theorem \ref{thm:coron} and Section \ref{sec:symmetries} to that of Theorem \ref{thm:sym1}.

\section{The variational problem}
\label{sec:setting}
	
We start by explaining the relevance of the number $\Lambda_\o$ defined in \eqref{eq:Lambda}. Note first that it satisfies the following inequalities.

\begin{lemma} \label{lem:Lambda}
$0<\Lambda_\o\leq\lambda_1(\o)$, where $\lambda_1(\o)$ is the first Dirichlet eigenvalue of $-\Delta$ in $\o$.
\end{lemma}

\begin{proof}
By Hölder's and Sobolev's inequalities, there is a constant $C>0$, depending only on $\o$ and $N$, such that
\begin{align*}
\io u^2\leq |\o|^\frac{2}{N}\left(\io |u|^{2^*}\right)^\frac{2}{2^*}\leq|\o|^\frac{2}{N}\left(\irn |u|^{2^*}\right)^\frac{2}{2^*}\leq C\irn|\nabla u|^2
\end{align*}
for every $u\in D^{1,2}(\rn)$. Therefore, $0<\Lambda_\o$. On the other hand, as $D_0^{1,2}(\o)\subset D^{1,2}(\rn)$ we have that
$$\Lambda_\o=\inf_{\substack{u\in D^{1,2}(\rn)\\ u\neq 0}}\frac{\irn|\nabla u|^2}{\io u^2}\leq \inf_{\substack{u\in D_0^{1,2}(\o)\\ u\neq 0}}\frac{\io|\nabla u|^2}{\io u^2}=\lambda_1(\o),$$
as claimed.
\end{proof}

The main property of $\Lambda_\o$ is the following one.

\begin{lemma}\label{lem:equivalent norm}
For every $\lambda\in(-\Lambda_\o,0]$, the expression
$$\|u\|_\lambda:= \left(\irn(|\nabla u|^2 + \lambda\mathds 1_\o u^2)\right)^{1/2}.$$
defines a norm in $D^{1,2}(\rn)$ that is equivalent to the standard one, denoted by
$$\|u\|:= \left(\irn|\nabla u|^2\right)^{1/2}.$$
\end{lemma}

\begin{proof}
As $\lambda\in(-\Lambda_\o,0]$, for every $u\in D^{1,2}(\rn)$ we have that
$$\|u\|^2=\|u\|^2_\lambda - \lambda\io u^2 \leq \|u\|^2_\lambda - \frac{\lambda}{\Lambda_\o}\|u\|^2.$$
Therefore,
$$\Big(1+\frac{\lambda}{\Lambda_\o}\Big)\|u\|^2\leq\|u\|^2_\lambda\leq\|u\|^2,$$
and $1+\frac{\lambda}{\Lambda_\o}>0$. The statement follows easily.
\end{proof}
	
The solutions to \eqref{eq:problem} are the critical points of the functional $J_\lambda:D^{1,2}(\rn)\to\r$ given by
$$J_\lambda(u):=\frac{1}{2}\|u\|_\lambda^2 - \frac{1}{2^*}\irn Q_\o(x)|u|^{2^*},$$
which is of class $\cC^2$. Its derivative at $u$ is
$$J'_\lambda(u)v=\irn\nabla u\cdot\nabla v + \irn\lambda\mathds 1_\o  uv - \irn Q_\o(x)|u|^{2^*-2}uv.$$
The nontrivial critical points of $J_\lambda$ belong to the Nehari manifold
$$\cN_\lambda:=\{u\in D^{1,2}(\rn):u\neq 0, \ J'_\lambda(u)u=0\}=\{u\in D^{1,2}(\rn):u\neq 0, \ \|u\|_\lambda^2 = \irn Q_\o(x)|u|^{2^*}\},$$
which is a Hilbert submanifold of $D^{1,2}(\rn)$ of class $\cC^2$ and a natural constraint for $J_\lambda$. Note that
$$J_\lambda(u)=\frac{1}{N}\|u\|_\lambda^2=\frac{1}{N}\irn Q_\o(x)|u|^{2^*}\qquad\text{if \ }u\in\cN_\lambda.$$
Set 
$$c_\lambda:=\inf_{u\in\cN_\lambda}J_\lambda(u).$$
A function $u\in\cN_\lambda$ that satisfies $J_\lambda(u)=c_\lambda$ is called a \emph{least energy solution} to the problem \eqref{eq:problem}.

Not every element of $D^{1,2}(\rn)$ admits a radial projection onto $\cN_\lambda$. Those that do, belong to the set
\begin{equation*}
\cU:=\Big\{u\in D^{1,2}(\rn):\irn Q_\o(x)|u|^{2^*}>0\Big\}. 
\end{equation*}
Indeed, for any $u\in\cU$ there exists a unique $\tau_{\lambda,u}\in(0,\infty)$ such that $\tau_{\lambda,u}u\in\cN_\lambda$. Explicitly,
\begin{equation} \label{eq:U}
\tau_{\lambda,u}=\left(\frac{\|u\|_\lambda^2}{\irn Q_\o(x)|u|^{2^*}}\right)^\frac{1}{2^*-2}\qquad\text{and}\qquad J_\lambda(\tau_{\lambda,u}u)=\frac{1}{N}\left(\frac{\|u\|_\lambda^2}{\Big(\irn Q_\o(x)|u|^{2^*}\Big)^{2/2^*}}\right)^\frac{N}{2}.
\end{equation}
Note that $\cN_\lambda\subset\cU$. Therefore,
$$c_\lambda=\frac{1}{N}\left(\inf_{u\in\cU}\frac{\|u\|_\lambda^2}{\Big(\irn Q_\o(x)|u|^{2^*}\Big)^{2/2^*}}\right)^\frac{N}{2}.$$
Let $S$ be the best constant for the Sobolev embedding $D^{1,2}(\rn)\hookrightarrow L^{2^*}(\rn)$.
	
\begin{proposition}\label{prop:minimal energy}
\begin{itemize}
\item[$(a)$] $c_\lambda > 0$ for every $\lambda\in(-\Lambda_\o,0]$.
\item[$(b)$] $c_\lambda < \frac{1}{N}S^\frac{N}{2}$ if $N\geq 4$ and $\lambda\in(-\Lambda_\o,0)$.
\item[$(c)$] $c_0=\frac{1}{N}S^\frac{N}{2}$ and $c_0$ is not attained, i.e., problem \eqref{eq:problem} does not have a least energy solution if $\lambda=0$.
\end{itemize}
\end{proposition}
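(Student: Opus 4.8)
The plan is to treat the three items separately, using throughout the variational formula
$c_\lambda=\frac1N\big(\inf_{u\in\cU}\,\|u\|_\lambda^2/(\irn Q_\o(x)|u|^{2^*})^{2/2^*}\big)^{N/2}$
displayed above, together with the trivial bound $\irn Q_\o(x)|u|^{2^*}\le\irn|u|^{2^*}$, valid because $Q_\o\le 1$. For item $(a)$ I would simply combine Lemma~\ref{lem:equivalent norm}, which gives $\|u\|_\lambda^2\ge(1+\lambda/\Lambda_\o)\|u\|^2$ with $1+\lambda/\Lambda_\o>0$, with the Sobolev inequality $(\irn|u|^{2^*})^{2/2^*}\le S^{-1}\|u\|^2$. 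Inserting both into the quotient above yields
$\|u\|_\lambda^2/(\irn Q_\o(x)|u|^{2^*})^{2/2^*}\ge(1+\lambda/\Lambda_\o)\,S>0$ for every $u\in\cU$, hence $c_\lambda\ge\frac1N\big((1+\lambda/\Lambda_\o)S\big)^{N/2}>0$.

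For item $(b)$ the key observation is that the standard Aubin--Talenti bubbles can be placed entirely inside $\o$, so that on them $Q_\o\equiv 1$ and the quotient above becomes the classical Brezis--Nirenberg quotient. Concretely, I would fix $\xi\in\o$ and $\rho>0$ with $\overline{B_\rho(\xi)}\subset\o$, pick $\vp\in\cC_c^\infty(B_\rho(\xi))$ with $\vp\equiv 1$ near $\xi$, and test with $u_\eps:=\vp\,U_\eps$, where $U_\eps$ is the instanton of concentration $\eps$ centered at $\xi$, normalized so that $\|U_\eps\|^2=\irn|U_\eps|^{2^*}=S^{N/2}$. Since $\supp u_\eps\subset\o$, we have $u_\eps\in\cU$ and $\irn Q_\o(x)|u_\eps|^{2^*}=\irn|u_\eps|^{2^*}$. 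The classical expansions (as in \cite{bn}) give, as $\eps\to 0^+$, $\|u_\eps\|^2=S^{N/2}+O(\eps^{N-2})$, $\irn|u_\eps|^{2^*}=S^{N/2}+O(\eps^N)$, and $\io\mathds 1_\o u_\eps^2=\irn u_\eps^2$, the last being a positive multiple of $\eps^2$ when $N\ge 5$ and of $\eps^2|\log\eps|$ when $N=4$, up to lower-order terms. Because $\lambda<0$, the term $\lambda\io\mathds 1_\o u_\eps^2$ is negative and, for $N\ge 4$, dominates the $O(\eps^{N-2})$ remainder in $\|u_\eps\|^2$; hence for $\eps$ small the quotient at $u_\eps$ is strictly below $S$, and therefore $c_\lambda<\frac1N S^{N/2}$.

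For item $(c)$, the lower bound $c_0\ge\frac1N S^{N/2}$ follows at once from $\irn Q_\o(x)|u|^{2^*}\le\irn|u|^{2^*}$ and Sobolev, exactly as in $(a)$ with $\lambda=0$, while the upper bound $c_0\le\frac1N S^{N/2}$ comes from the test functions $u_\eps$ of part $(b)$, for which the quotient tends to $S$. To prove non-attainment I would argue by contradiction: if $u\in\cN_0$ satisfied $J_0(u)=c_0=\frac1N S^{N/2}$, then $\|u\|^2=\irn Q_\o(x)|u|^{2^*}$ and $\|u\|^2/(\irn Q_\o(x)|u|^{2^*})^{2/2^*}=S$; sandwiching this between $S$ (Sobolev) and $\|u\|^2/(\irn|u|^{2^*})^{2/2^*}$ forces both $\irn|u|^{2^*}=\irn Q_\o(x)|u|^{2^*}$ — i.e.\ $u=0$ a.e.\ on $\rn\smallsetminus\o$, a set of positive measure since $\o$ is bounded — and equality in the Sobolev inequality for $u$ — which forces $u$ to be, up to sign, translation and dilation, an Aubin--Talenti instanton, in particular nowhere zero on $\rn$. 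These two facts are incompatible, so $c_0$ is not attained.

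The only genuinely delicate point I anticipate is the order bookkeeping in part $(b)$: one must check that the negative contribution $\lambda\io\mathds 1_\o u_\eps^2$ really beats the positive remainder $O(\eps^{N-2})$ coming from truncating the bubble. This is exactly where the hypothesis $N\ge 4$ is used — for $N=3$ both quantities are $O(\eps)$ and the sign of the correction is no longer clear, in analogy with the classical Brezis--Nirenberg phenomenon — and it explains why the restriction appears only in $(b)$, not in $(a)$ or in the lower bound of $(c)$. Everything else reduces to the norm equivalence of Lemma~\ref{lem:equivalent norm} and standard Sobolev/Aubin--Talenti facts, which become directly applicable here precisely because the test bubbles are supported in $\o$, where $Q_\o\equiv 1$.
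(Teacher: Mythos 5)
Your proposal is correct, and for parts $(a)$ and $(b)$ it follows essentially the paper's route: $(a)$ is the norm equivalence of Lemma~\ref{lem:equivalent norm} plus Sobolev (the paper phrases it as a lower bound for $\|u\|_\lambda$ on $\cN_\lambda$, you phrase it via the quotient over $\cU$ — same content), and $(b)$ rests on the observation that test functions supported in $\o$ reduce the quotient to the classical Brezis--Nirenberg quotient; the paper simply cites \cite[Lemma 1.1]{bn} for the strict inequality $<S$ when $N\ge 4$, whereas you re-derive the expansion of $\|u_\eps\|^2$, $|u_\eps|_{2^*}^{2^*}$ and $\irn u_\eps^2$ for the truncated bubble. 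Your bookkeeping there is right: for $N\ge5$ the term $\lambda\,\eps^2$ dominates $O(\eps^{N-2})$, and for $N=4$ the term $\lambda\,\eps^2|\log\eps|$ dominates $O(\eps^{2})$. The genuine difference is in the non-attainment part of $(c)$. Both arguments start from the same sandwich forcing $\irn|u|^{2^*}=\irn Q_\o|u|^{2^*}$ (so $u\equiv0$ a.e.\ outside $\o$) together with equality in the Sobolev inequality; the paper then uses that a minimizer on $\cN_0$ is a (least energy) solution and invokes the maximum principle to rule out vanishing on the open set $\rn\smallsetminus\overline\o$, while you instead invoke the Talenti--Aubin classification of Sobolev extremals as instantons, which are nowhere zero. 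Your route has the small advantage of not needing the minimizer to be a critical point nor any elliptic regularity, at the price of quoting the (classical but nontrivial) rigidity of Sobolev extremals; both are complete proofs.
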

	
\begin{proof}
$(a):$ \ Let $\lambda\in(-\Lambda_\o,0]$. By Lemma \ref{lem:equivalent norm} and Sobolev's inequality there is $C>0$ such that
$$\|u\|_\lambda^2=\irn Q_\o|u|^{2^*}\leq \irn |u|^{2^*}\leq C\|u\|_\lambda^{2^*}\qquad\text{for every \ }u\in\cN_\lambda.$$
Therefore, $c_\lambda > 0$.

$(b):$ \ Assume that $N\geq 4$ and $\lambda\in(-\Lambda_\o,0)$. Since \ $D^{1,2}_0(\o)\smallsetminus\{0\}\subset\cU$ \ and \ $\Lambda_\o\leq\lambda_1(\o)$ \ we have that 
$$\inf_{u\in \cU}\frac{\|u\|^2}{\Big(\irn Q_\o(x)|u|^{2^*}\Big)^{2/2^*}}\leq \inf_{\substack{u\in D^{1,2}_0(\o) \\ u\neq 0}}\frac{\io(|\nabla u|^2 + \lambda u^2)}{\Big(\io|u|^{2^*}\Big)^{2/2^*}}<S.$$
The last (strict) inequality follows from the classical Brezis-Nirenberg estimates \cite[Lemma 1.1]{bn}. Hence, $c_\lambda < \frac{1}{N}S^\frac{N}{2}$.

$(c):$ \ Note that $D^{1,2}_0(\o)\smallsetminus\{0\}\subset\cU\subset D^{1,2}(\rn)\smallsetminus\{0\}$. Therefore,
$$S=\inf_{\substack{u\in D^{1,2}_0(\o) \\ u\neq 0}}\frac{\io|\nabla u|^2}{\Big(\io|u|^{2^*}\Big)^{2/2^*}}\geq\inf_{u\in \cU}\frac{\|u\|^2}{\Big(\irn Q_\o(x)|u|^{2^*}\Big)^{2/2^*}}\geq\inf_{\substack{u\in D^{1,2}(\rn) \\ u\neq 0}}\frac{\|u\|^2}{\Big(\irn |u|^{2^*}\Big)^{2/2^*}}=:S.$$
The first equality is well known \cite[Lemma III.1.1]{s}. It follows that all inequalities are equalities and
$$\inf_{u\in \cU}\frac{\|u\|^2}{\Big(\irn Q_\o(x)|u|^{2^*}\Big)^{2/2^*}}=S.$$
Hence, $c_0=\frac{1}{N}S^\frac{N}{2}$. If this infimum were attained at some $u\in\cN_0$, from the identity
$$\frac{\|u\|^2}{\Big(\irn Q_\o(x)|u|^{2^*}\Big)^{2/2^*}}=\frac{\|u\|^2}{\Big(\irn |u|^{2^*}\Big)^{2/2^*}}=S$$
we would get that $u\equiv 0$ in $\rn\smallsetminus\o$, contradicting the maximum principle.
\end{proof}

\section{A Poho\v{z}aev identity and a nonexistence result}
\label{sec:pohozhaev}
	
In this section we assume that $\t$ is a \emph{possibly unbounded} smooth open subset of $\rn$ and we set
\begin{equation} \label{eq:Q_t}
Q_\t(x):=
\begin{cases}
1 & \text{if \ }x\in\t, \\
-1 & \text{if \ }x\in\rn\smallsetminus\t.
\end{cases}
\end{equation}
 Let $f:\r\to\r$ be a continuous function and $F:\r\to \r$ be given by
\begin{align*}
F(w):=\int_0^w f(t)\d t.
\end{align*}
We prove the following Poho\v{z}aev identity.
	
\begin{theorem}\label{thm:pohozhaev}
Let $u\in \cC^{1}(\rn)\cap \cC^2(\t)\cap \cC^2(\rn\smallsetminus\overline{\t})\cap D^{1,2}(\rn)$ be a solution to the equation
\begin{equation} \label{eq:problem_pohozhaev}
-\Delta u=Q_\t(x)f(u)
\end{equation}
such that $F(u)\in L^1(\rn)$. Then,
\begin{align*}
\frac{1}{2^*} \int_{\R^N}|\nabla u|^2-\int_{\R^N}Q_\t(x)F(u)\d x  + \frac{2}{N}\int_{\partial\t}F(u) \zeta\cdot \nu_\t \d \zeta=0,
\end{align*}
where $\nu_\t$ denotes the exterior unit normal to $\t$.
\end{theorem}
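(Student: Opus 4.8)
The plan is to derive the identity by the standard Pohozaev technique, multiplying the equation \eqref{eq:problem_pohozhaev} by the dilation field $x\cdot\nabla u$ and integrating, but being careful to track the boundary contribution on $\partial\t$ caused by the jump of $Q_\t$. First I would work on a large ball $B_\rho$ and split it as $B_\rho = (B_\rho\cap\t)\cup(B_\rho\setminus\overline{\t})\cup(B_\rho\cap\partial\t)$. On each of the two open pieces $u$ is $\cC^2$, so I can apply the classical pointwise identities: multiplying $-\Delta u = \pm f(u)$ by $x\cdot\nabla u$ gives, after the usual integration by parts on a smooth subdomain $U$,
\begin{align*}
\int_U \Big(\tfrac{N-2}{2}|\nabla u|^2 - N\, (\pm F(u))\Big)\d x
= \int_{\partial U}\Big( (x\cdot\nu)\big(\tfrac12|\nabla u|^2 \mp F(u)\big) - (x\cdot\nabla u)\,\partial_\nu u\Big)\d\sigma,
\end{align*}
where the sign $\pm$ is $+$ on $\t$ and $-$ on the complement, and I use the elementary identity $\nabla\big(\pm F(u)\big)\cdot x = (x\cdot\nabla u)(\pm f(u))$ valid on each piece where $u\in\cC^2$.

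Next I would add the two contributions. On the common interface $\partial\t$, the boundary of $B_\rho\cap\t$ carries outward normal $\nu_\t$ while the boundary of $B_\rho\setminus\overline\t$ carries $-\nu_\t$. Because $u\in\cC^1(\rn)$, the terms $|\nabla u|^2$, $\partial_\nu u$ and $(x\cdot\nabla u)$ are continuous across $\partial\t$, so in the sum all the $\tfrac12|\nabla u|^2$ and $(x\cdot\nabla u)\partial_\nu u$ interface contributions cancel in pairs; the only survivor is the $F(u)$ term, which appears as $-F(u)$ from the $\t$-side (with $+$ sign convention and normal $\nu_\t$) combined with $+F(u)$ from the complement side but with normal $-\nu_\t$ and sign $-$, giving a net $-2\int_{\partial\t}F(u)\,(\zeta\cdot\nu_\t)\d\zeta$ over the part of $\partial\t$ inside $B_\rho$. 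Meanwhile the bulk terms combine into $\tfrac{N-2}{2}\int_{B_\rho}|\nabla u|^2 - N\int_{B_\rho}Q_\t F(u)$, and the remaining boundary integrals live on $\partial B_\rho$, where $\nu = x/\rho$ and $x\cdot\nu = \rho$.

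Then I would let $\rho\to\infty$. Since $u\in D^{1,2}(\rn)$ we have $\int_{\rn}|\nabla u|^2<\infty$ and $\int_\rn|u|^{2^*}<\infty$, and $F(u)\in L^1(\rn)$ by hypothesis, so the bulk integrals converge to the claimed quantities; note $\tfrac{N-2}{2} = \tfrac{N}{2^*}$, which produces the coefficient $\tfrac{1}{2^*}$ after dividing the whole identity by $N$ (or equivalently multiplying the surface term by $\tfrac{2}{N}$). The remaining task is to show the $\partial B_\rho$ integrals vanish along a suitable sequence $\rho_k\to\infty$; this is the one genuinely delicate point. The terms on $\partial B_\rho$ are bounded by $C\rho\int_{\partial B_\rho}(|\nabla u|^2 + |u|^{2^*} + |F(u)|)\d\sigma$ up to controlling $F$ near $0$, and since $\int_\rn(|\nabla u|^2+|u|^{2^*})<\infty$ one cannot in general make $\rho\int_{\partial B_\rho}(\cdots)\to 0$ for \emph{every} $\rho$, but a standard argument shows it does along some sequence: if $g\ge 0$ with $\int_0^\infty\big(\int_{\partial B_\rho}g\big)\d\rho<\infty$, then $\liminf_{\rho\to\infty}\rho\int_{\partial B_\rho}g = 0$. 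Applying this to $g = |\nabla u|^2$ and $g=|u|^{2^*}$ (and to $|F(u)|\in L^1$), and using the continuity of $u$ to handle the mixed term $(x\cdot\nabla u)\partial_\nu u$ via Cauchy--Schwarz, I pass to the limit along such a sequence $\rho_k$ and obtain exactly the stated identity. The main obstacle, then, is purely this decay-along-a-subsequence estimate on large spheres, together with making sure that all the interface terms on $\partial\t$ cancel correctly — which relies essentially on the $\cC^1(\rn)$ regularity hypothesis that was built into the statement precisely for this purpose.
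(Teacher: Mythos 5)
Your proposal is correct and follows essentially the same route as the paper: the same pointwise Rellich--Pohozaev divergence identities on $\t$ and on $\rn\smallsetminus\overline{\t}$, the same cancellation of the $|\nabla u|^2$ and $(x\cdot\nabla u)\partial_\nu u$ interface terms via the $\cC^1(\rn)$ hypothesis leaving only the $F(u)\,\zeta\cdot\nu_\t$ jump, and the same $\liminf$ argument selecting a sequence of radii along which the far boundary terms vanish. The only cosmetic difference is that you truncate both regions with $B_\rho$ simultaneously, while the paper truncates only the complement of $\t$ and integrates over $\t$ directly.
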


\begin{proof}
We begin by following the standard argument used to show the usual Poho\v{z}aev identity. Let $u$ be a solution of \eqref{eq:problem_pohozhaev}. Observe that
\begin{align*}
\operatorname{div}(F(u) x) & =f(u)(\nabla u \cdot x)+N F(u), \\
\operatorname{div}((\nabla u \cdot x) \nabla u) & =\nabla(\nabla u \cdot x) \cdot \nabla u+(\nabla u \cdot x) \Delta u,\\
\nabla(\nabla u \cdot x) \cdot \nabla u & =\sum_{i=1}^N \frac{\partial}{\partial x_i}\left(\sum_{j=1}^N \frac{\partial u}{\partial x_j} x_j\right) \frac{\partial u}{\partial x_i}=\sum_{i, j=1}^N \frac{\partial^2 u}{\partial x_i \partial x_j} x_j \frac{\partial u}{\partial x_i}+\sum_{i=1}^N\left|\frac{\partial u}{\partial x_i}\right|^2 \\
& =\sum_{j=1}^N \frac{\partial}{\partial x_j}\left(\frac{|\nabla u|^2}{2}\right) x_j+|\nabla u|^2=\nabla\left(\frac{|\nabla u|^2}{2}\right) \cdot x+|\nabla u|^2.
\end{align*}
Therefore,
\begin{align*}
\operatorname{div}((\nabla u \cdot x) \nabla u) & =\nabla\left(\frac{|\nabla u|^2}{2}\right) \cdot x+|\nabla u|^2+(\nabla u \cdot x) \Delta u \\
& =\operatorname{div}\left(\frac{|\nabla u|^2}{2} x\right)-\frac{N}{2}|\nabla u|^2+|\nabla u|^2+(\nabla u \cdot x) \Delta u\\
& =\operatorname{div}\left(\frac{|\nabla u|^2}{2} x\right)-\frac{N-2}{2}|\nabla u|^2+(\nabla u \cdot x) \Delta u
\end{align*}
Since $u$ solves \eqref{eq:problem_pohozhaev}, we have that
\begin{align}\label{1eq}
0 & =(\Delta u+f(u))(\nabla u \cdot x) =\frac{N-2}{2}|\nabla u|^2-N F(u)+\operatorname{div}\left((\nabla u \cdot x) \nabla u-\frac{|\nabla u|^2}{2} x+F(u) x\right)\quad \text{ in }\t
\end{align}
and
\begin{align}\label{2eq}
0 & =\frac{N-2}{2}|\nabla u|^2+N F(u)+\operatorname{div}\left((\nabla u \cdot x) \nabla u-\frac{|\nabla u|^2}{2} x-F(u) x\right)\quad \text{ in }\mathbb R^N\smallsetminus\overline{\t}.
\end{align}
Let $B_R$ be the ball of radius $R$ in $\rn$, centered at the origin. We integrate \eqref{2eq} over $B_R\smallsetminus\t$ to obtain
\begin{align}\label{3eq}
0&=\frac{N-2}{2} \int_{B_R\smallsetminus\t}|\nabla u|^2
+N \int_{B_R\smallsetminus\t} F(u)+\int_{\partial \t\cup \partial B_R}\left((\nabla u \cdot \zeta) \nabla u-\frac{|\nabla u|^2}{2} \zeta	-F(u) \zeta\right) \cdot \nu_{B_R\smallsetminus\t}\, \d \zeta,
\end{align}
where $\nu_{B_R\smallsetminus\t}$ is the exterior unit normal to $B_R\smallsetminus\t$.  Since $F(u)\in L^1(\R^N)$, we have that
\begin{align*}
\lim_{n\to \infty}\int_{\partial B_{R_n}(0)}\left((\nabla u \cdot \zeta) \nabla u-\frac{|\nabla u|^2}{2} \zeta+F(u) \zeta\right) \cdot \nu_{B_{R_n}} \d \zeta=0
\end{align*}
for some sequence $R_n\to\infty.$ Then, letting $R=R_n\to\infty$ in \eqref{3eq}, we get
\begin{align}\label{4eq}
0=\frac{N-2}{2} \int_{\R^N\smallsetminus\t}|\nabla u|^2+N \int_{\R^N\smallsetminus\t} F(u)	+\int_{\partial \t}\left((\nabla u \cdot \zeta) \nabla u-\frac{|\nabla u|^2}{2} \zeta
-F(u) \zeta\right) \cdot \nu_{\rn\smallsetminus\t} \d \zeta.
\end{align}
Similarly, from \eqref{1eq} we obtain 
\begin{align}
0= \frac{N-2}{2} \int_{\t}|\nabla u|^2-N \int_{\t} F(u)  +\int_{\partial \t}\left((\nabla u \cdot \zeta) \nabla u-\frac{|\nabla u|^2}{2} \zeta+F(u) \zeta\right) \cdot \nu_\t \d\zeta.\label{12eq}
\end{align}
Noting that $\nu_{\rn\smallsetminus\t}=-\nu_\t$ and adding \eqref{12eq} and \eqref{4eq} yields
\begin{align*}
\frac{N-2}{2} \int_{\R^N}|\nabla u|^2-N \int_{\R^N}Q_\t(x)F(u)  + 2\int_{\partial \t}F(u) \zeta\cdot \nu_\t\d \zeta	=0,
\end{align*}
as claimed.
\end{proof}

\begin{corollary}\label{cor:nonexistence}
Let  $\t$ be a possibly unbounded smooth open subset of $\rn$ which is strictly starshaped. If $\rn\smallsetminus\overline{\t}\neq\emptyset$, the problem 
\begin{equation}\label{eq:problem theta}
\begin{cases}
-\Delta u = Q_\t(x)|u|^{2^*-2}u,\\
u\in D^{1,2}(\rn),
\end{cases} 
\end{equation}
admits only the trivial solution.
\end{corollary}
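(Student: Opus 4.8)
The plan is to run the standard Poho\v{z}aev argument, feeding $f(u)=|u|^{2^*-2}u$ (so that $F(u)=\tfrac{1}{2^*}|u|^{2^*}$) into Theorem \ref{thm:pohozhaev} and combining the resulting identity with the Nehari identity obtained by testing the equation against $u$ itself. First I would dispose of the hypotheses of Theorem \ref{thm:pohozhaev}. Any solution $u$ of \eqref{eq:problem theta} lies in $D^{1,2}(\rn)\hookrightarrow L^{2^*}(\rn)$, hence $F(u)=\tfrac{1}{2^*}|u|^{2^*}\in L^1(\rn)$; and a routine elliptic bootstrap (Brezis--Kato to get $L^\infty_{\mathrm{loc}}$, then Schauder estimates applied separately in $\t$ and in $\rn\smallsetminus\overline\t$, where the right-hand side is $+|u|^{2^*-2}u$, resp.\ $-|u|^{2^*-2}u$) gives $u\in\cC^1(\rn)\cap\cC^2(\t)\cap\cC^2(\rn\smallsetminus\overline\t)$. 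Since $\t$ is strictly starshaped, after a translation (which turns $u$ into a solution of the translated problem) we may assume $\zeta\cdot\nu_\t(\zeta)>0$ for every $\zeta\in\partial\t$.

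Now the core step. Theorem \ref{thm:pohozhaev} yields
\begin{equation*}
\frac{1}{2^*}\irn|\nabla u|^2-\frac{1}{2^*}\irn Q_\t(x)|u|^{2^*}+\frac{2}{N2^*}\int_{\partial\t}|u|^{2^*}\,\zeta\cdot\nu_\t\d\zeta=0,
\end{equation*}
while testing \eqref{eq:problem theta} with $u$ (admissible since $u\in D^{1,2}(\rn)$ and $Q_\t|u|^{2^*}\in L^1(\rn)$) gives $\irn|\nabla u|^2=\irn Q_\t(x)|u|^{2^*}$. Substituting, the bulk terms cancel and we are left with $\int_{\partial\t}|u|^{2^*}\,\zeta\cdot\nu_\t\d\zeta=0$. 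As the integrand is nonnegative ($|u|^{2^*}\ge 0$ and $\zeta\cdot\nu_\t>0$ on the smooth hypersurface $\partial\t$), it vanishes $\sigma$-a.e., so $u\equiv 0$ on $\partial\t$ by continuity.

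It remains to upgrade ``$u=0$ on $\partial\t$'' to ``$u\equiv 0$''. Because $u$ vanishes on $\partial\t$, its tangential gradient there vanishes, so $\nabla u=(\partial_{\nu_\t}u)\,\nu_\t$ and $F(u)=0$ on $\partial\t$; inserting this into the one-sided identity \eqref{12eq} from the proof of Theorem \ref{thm:pohozhaev} (Poho\v{z}aev restricted to $\t$) and using the Nehari identity restricted to $\t$ (the boundary term there drops out since $u=0$ on $\partial\t$), all bulk terms cancel again and only $\tfrac12\int_{\partial\t}(\partial_{\nu_\t}u)^2\,\zeta\cdot\nu_\t\d\zeta=0$ survives, whence $\partial_{\nu_\t}u\equiv 0$ on $\partial\t$ too. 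Thus $u$ has vanishing Cauchy data on $\partial\t$ and solves the linear equation $-\Delta u=V(x)u$ with $V:=Q_\t|u|^{2^*-2}\in L^\infty_{\mathrm{loc}}(\rn)$; since $\rn\smallsetminus\overline\t\ne\emptyset$ and $\rn$ is connected, the unique continuation principle forces $u\equiv 0$. (Alternatively, once $u=0$ on $\partial\t$ one can test $-\Delta u+|u|^{2^*-2}u=0$, valid on $\rn\smallsetminus\overline\t$, against $u$ on $B_R\smallsetminus\overline\t$: the $\partial\t$-term drops out, and choosing $R_n\to\infty$ along which the spherical term tends to $0$---possible via Hardy's inequality $\int_{\rn}u^2/|x|^2\le C\|\nabla u\|^2$ together with $\|\nabla u\|<\infty$ and Cauchy--Schwarz on spheres---gives $\int_{\rn\smallsetminus\overline\t}(|\nabla u|^2+|u|^{2^*})=0$, so $u$ vanishes outside $\overline\t$; then $u\in D^{1,2}_0(\t)$ with zero normal derivative on $\partial\t$, and unique continuation concludes.)

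The only genuinely delicate point is the handling of the contributions at infinity when $\t$ is unbounded: this affects the Nehari and one-sided Poho\v{z}aev identities restricted to $\t$ (and the alternative energy argument on $\rn\smallsetminus\overline\t$), and must be dealt with by a good choice of exhausting radii, exactly as in the proof of Theorem \ref{thm:pohozhaev}. When $\t$ is bounded these issues disappear and the argument reduces to the classical Poho\v{z}aev nonexistence proof. The regularity bootstrap and the unique continuation step are standard and I would only cite them.
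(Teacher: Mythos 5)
Your proposal is correct, and its first half coincides exactly with the paper's argument: elliptic regularity, then Theorem \ref{thm:pohozhaev} with $f(t)=|t|^{2^*-2}t$ combined with the testing identity $\irn|\nabla u|^2=\irn Q_\t|u|^{2^*}$ to reduce everything to $\int_{\partial\t}|u|^{2^*}\,\zeta\cdot\nu_\t=0$, whence $u=0$ on $\partial\t$ by strict starshapedness. The endgame is where you diverge. The paper treats the interior first: $u$ solves the Dirichlet problem $-\Delta u=|u|^{2^*-2}u$ in $\t$ with zero boundary data on the starshaped set $\t$, so the classical Poho\v{z}aev-plus-unique-continuation result gives $u\equiv0$ in $\t$; then the \emph{global} identity $\irn|\nabla u|^2=\irn Q_\t|u|^{2^*}$ collapses to $\int_{\rn\smallsetminus\t}(|\nabla u|^2+|u|^{2^*})=0$, killing $u$ outside. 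Note that this last step involves no boundary terms at infinity, since it is just the weak formulation tested with $u\in D^{1,2}(\rn)$. Your main route instead extracts $\partial_{\nu_\t}u=0$ on $\partial\t$ from the one-sided identity \eqref{12eq} together with the Nehari identity restricted to $\t$, and then invokes unique continuation from vanishing Cauchy data; your parenthetical alternative kills the exterior first (exploiting the defocusing sign there) and continues uniquely from the open set $\rn\smallsetminus\overline\t$. Both variants are sound and arguably avoid re-citing the starshaped Dirichlet nonexistence theorem, but they cost you an extra localized integration by parts over the unbounded set $\t$ (resp.\ $\rn\smallsetminus\overline\t$): its sphere term $\int_{\partial B_R}|u|\,|\nabla u|$ is \emph{not} controlled by $F(u),|\nabla u|^2\in L^1(\rn)$ alone, so the Hardy-inequality/Cauchy--Schwarz selection of radii you sketch is genuinely needed there and is not already contained in the proof of Theorem \ref{thm:pohozhaev}. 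You correctly flag this; the paper's ordering of the two regions sidesteps it entirely.
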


\begin{proof}
If $u$ solves \eqref{eq:problem theta} then, by standard elliptic regularity, $u\in \cC^{1,\alpha}_{loc}(\R^N)$, see \cite[Lemma 3.3]{chs}. Applying Theorem \ref{thm:pohozhaev} with $f(t):=|t|^{2^*-2}t$ we see that $u$ satisfies
\begin{align*}
0=\frac{1}{2^*} \int_{\R^N}|\nabla u|^2-\frac{1}{2^*} \int_{\R^N}Q_\t(x)|u|^{2^*}  + \frac{N-2}{N^2}\int_{\partial \t}|u|^{2^*} \zeta\cdot \nu_\o \d \zeta = \frac{N-2}{N^2}\int_{\partial \t}|u|^{2^*} \zeta\cdot \nu_\t \d \zeta.
\end{align*}
Now, since $\t$ is strictly starshaped, we have that $\zeta\cdot \nu_\t$ is strictly positive on $\partial\t$. This implies that $u=0$ on $\partial \t$. Therefore, $u$ solves
\begin{align*}
-\Delta u = |u|^{2^*-2}u\quad \text{in }\t,\qquad u=0\quad \text{on }\partial \t.
\end{align*}
By the usual Poho\v{z}aev identity and the unique continuation principle this problem admits only the trivial solution, see \cite[Theorem III.1.3]{s}.  Therefore, $u\equiv 0$ in $\t$.  Now, testing \eqref{eq:problem theta} with $u$ we get that
\begin{align*}
0=\int_{\R^N \smallsetminus\t}(|\nabla u|^2+|u|^{2^*}),
\end{align*}
which implies that $u\equiv 0$ in $\R^N$, as claimed.
\end{proof}
	
\begin{proof}[Proof of Theorem \ref{thm:nonexistence}]
$(i)$ was proved in Proposition \ref{prop:minimal energy} and $(ii)$ is a special case of Corollary \ref{cor:nonexistence}.
\end{proof}

\section{Representation of Palais-Smale sequences}
\label{sec:compactness}

In this section we describe the loss of compactness of the functional $J_\lambda$ associated with the problem \eqref{eq:problem}. We shall see that compactness is lost due to the concentration of solutions of the limit problem
\begin{equation} \label{eq:limit problem}
\begin{cases}
-\Delta u = |u|^{2^*-2}u,\\
u\in D^{1,2}(\rn),
\end{cases} 
\end{equation}
\emph{at points that belong to $\o$}. 

We denote by $J_\infty:D^{1,2}(\rn)\to\r$ the functional
$$J_\infty(u):=\frac{1}{2}\|u\|^2 - \frac{1}{2^*}\irn|u|^{2^*},$$
associated with the limit problem \eqref{eq:limit problem}, and by
$$\cN_\infty:=\{u\in D^{1,2}(\rn):u\neq 0, \ \|u\|^2 = \irn|u|^{2^*}\}$$
its Nehari manifold. Then,
$$c_\infty:=\inf_{u\in\cN_\infty}J_\infty(u)=\tfrac{1}{N}S^\frac{N}{2}.$$

For $u\in D^{1,2}(\rn)$, \ $\eps>0$ \ and \ $\xi\in\rn$, \ we define
$$u_{\eps,\xi}(x):=\eps^{\frac{2-N}{2}} u\Big(\frac{x - \xi}{\eps}\Big).$$
Note that $\|u_{\eps,\xi}\|=\|u\|$ and $|u_{\eps,\xi}|_{2^*}=|u|_{2^*},$ where $|\cdot|_{2^*}$ stands for the usual norm in $L^{2^*}(\rn)$.

The proof of the next theorem follows basically Struwe's argument, see \cite[Proposition 2.1]{s1} or \cite[Theorem III.3.1]{s}.

\begin{theorem}\label{thm:struwe}
Let $(u_k)$ be a sequence in $D^{1,2}(\R^N)$ such that
$$J_\lambda(u_k) \to c \qquad \text{and} \qquad J'_\lambda(u_k) \to 0\text{ \ in \ }(D^{1,2}(\rn))'.$$
Passing to a subsequence, there exist a (possibly trivial) solution $u$ of the problem \eqref{eq:problem}, a number $m \in \mathbb{N} \cup \{0\}$, $m$ sequences $(\xi_{1,k}), \dots, (\xi_{m,k})$ of points in $\Omega$, $m$ sequences $(\varepsilon_{1,k}), \dots, (\varepsilon_{m,k})$ in $(0, \infty)$, and $m$ nontrivial solutions $w_1, \dots, w_m$ to the limit problem \eqref{eq:limit problem} with the following properties:
\begin{itemize}
\item[$(i)$] $\lim_{k \to \infty} \varepsilon_{i,k}^{-1} \dist(\xi_{i,k}, \partial \Omega) = \infty$ for each $i = 1, \dots, m$.
\item[$(ii)$] $\lim_{k \to \infty} \|u_k - u - \sum\limits_{i=1}^m [w_i]_{\varepsilon_{i,k}, \xi_{i,k}}\| = 0$.
\item[$(iii)$] $J_\lambda(u) + \sum\limits_{i=1}^m J_\infty(w_i) = c\geq 0$.
\end{itemize}	
\end{theorem}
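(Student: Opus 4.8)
The proof follows the classical Struwe profile-decomposition argument, with one key twist: we must verify that blow-up can only happen at points of $\o$, not in $\rn\smallsetminus\overline\o$ nor on $\partial\o$. The skeleton is the standard iterated rescaling. First I would note that $(u_k)$ is bounded in $D^{1,2}(\rn)$: from $J_\lambda(u_k)\to c$ and $J'_\lambda(u_k)u_k=o(\|u_k\|)$ one gets $\frac1N\|u_k\|_\lambda^2 = c+o(1)+o(\|u_k\|)$, and by Lemma \ref{lem:equivalent norm} the $\lambda$-norm is equivalent to the standard one, so $\|u_k\|$ is bounded. Passing to a subsequence, $u_k\rh u$ weakly in $D^{1,2}(\rn)$, $u_k\to u$ in $L^2_{loc}$ and a.e., and a standard argument (Brezis--Lieb plus the local compactness of the lower-order term $\lambda\mathds 1_\o u_k^2$, since $\o$ is bounded) shows $u$ is a (possibly trivial) solution of \eqref{eq:problem}, hence $u\in\cN_\lambda\cup\{0\}$ and $J_\lambda(u)\ge 0$ by Proposition \ref{prop:minimal energy}(a). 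Set $v_k^1:=u_k-u$; then $v_k^1\rh 0$, $J_\infty(v_k^1)\to c-J_\lambda(u)$ and $J'_\infty(v_k^1)\to 0$ in $(D^{1,2}(\rn))'$ — here one uses again that the $\lambda$-term and the difference between $Q_\o|u_k|^{2^*-2}u_k$ and $|v_k^1|^{2^*-2}v_k^1 + |u|^{2^*-2}u$ are handled by Brezis--Lieb-type splittings together with $\o$ bounded.

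\smallskip
If $v_k^1\to 0$ strongly we stop with $m=0$. Otherwise, by the standard concentration lemma (Lions' or the rescaling argument in \cite[Theorem III.3.1]{s}) one extracts $\eps_{1,k}>0$ and $\xi_{1,k}\in\rn$ such that the rescaled functions $\tilde v_k^1(x):=\eps_{1,k}^{\frac{N-2}{2}}v_k^1(\eps_{1,k}x+\xi_{1,k})$ converge weakly to a nontrivial $w_1$ which, being a weak limit of (rescaled) almost-critical sequences for $J_\infty$, solves the limit problem \eqref{eq:limit problem}. The crucial new point is item $(i)$: one shows $\eps_{1,k}^{-1}\dist(\xi_{1,k},\partial\o)\to\infty$, equivalently that concentration cannot occur at a point of $\rn\smallsetminus\o$ or of $\partial\o$. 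For this I would argue by contradiction: if (along a subsequence) $\eps_{1,k}^{-1}\dist(\xi_{1,k},\partial\o)$ stays bounded, then in the rescaled picture the set $\eps_{1,k}^{-1}(\o-\xi_{1,k})$ converges (locally, after a further rotation) either to a half-space or to the empty set; in either case the limit equation satisfied by $w_1$ on a full neighborhood of its support would be $-\Delta w_1 = -|w_1|^{2^*-2}w_1$ (on the region outside the limiting half-space) glued to $-\Delta w_1=|w_1|^{2^*-2}w_1$ (inside). But $w_1\in D^{1,2}(\rn)$ cannot be a nontrivial solution of the pure ``negative'' equation on any open set containing part of its support — testing with $w_1$ on that region forces it to vanish there, and then unique continuation across the flat interface (the interface being a hyperplane, regularity is good enough) forces $w_1\equiv 0$, a contradiction. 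Thus concentration is confined to $\o$, which is exactly where $Q_\o=+1$ and the limit equation is the correct focusing one; this also shows $\xi_{i,k}\in\o$ for $k$ large after a harmless perturbation.

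\smallskip
Then one subtracts the bubble: $v_k^2 := v_k^1 - [w_1]_{\eps_{1,k},\xi_{1,k}}$. Using $\|[w_1]_{\eps,\xi}\|=\|w_1\|$, the Brezis--Lieb lemma, and the vanishing of cross terms, one checks $v_k^2\rh 0$, $J_\infty(v_k^2)\to c-J_\lambda(u)-J_\infty(w_1)$, $J'_\infty(v_k^2)\to 0$. Since every nontrivial solution of \eqref{eq:limit problem} has $J_\infty\ge c_\infty=\frac1N S^{N/2}>0$ (Pohozaev / Nehari), each extracted bubble lowers the energy by a fixed positive amount, so the iteration terminates after finitely many steps $m$ with $v_k^{m+1}\to 0$ strongly. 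This yields $(ii)$ and, reading off the energy balance at the final step, $(iii)$; and $(i)$ has been established bubble by bubble. A final remark: the sequences $(\eps_{i,k})$ and $(\xi_{i,k})$ for different $i$ are automatically ``mutually divergent'' in the usual Struwe sense, which is what makes the cross terms vanish and the norm splitting $(ii)$ exact.

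\smallskip
\textbf{Main obstacle.} The one genuinely new ingredient — and the step I expect to require the most care — is the proof of item $(i)$, i.e.\ ruling out concentration on $\partial\o$ and in $\rn\smallsetminus\overline\o$. One must handle the rescaling when $\xi_{1,k}$ approaches $\partial\o$ at a rate comparable to $\eps_{1,k}$ (the genuinely delicate regime, where $\partial\o$ flattens to a hyperplane in the limit), pass to the limit in the equation across this interface to identify the equation $w_1$ satisfies, and then invoke a nonexistence/unique-continuation argument for the transmission-type limit equation. Everything else is the routine Struwe machinery, where the only adaptation is carrying the harmless lower-order term $\lambda\mathds 1_\o u$ through the iteration — and that is harmless precisely because $\o$ is bounded, so $\mathds 1_\o u_k^2\to\mathds 1_\o u^2$ strongly in $L^1$ along the $L^2_{loc}$-convergent subsequence and disappears under every rescaling that sends $\eps_{i,k}\to 0$.
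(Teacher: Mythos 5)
Your overall architecture is the same as the paper's: boundedness, weak limit, subtraction, and then the Struwe iteration, with the genuinely new content concentrated in item $(i)$, i.e.\ reducing boundary concentration to the nonexistence of nontrivial solutions of the half-space transmission problem $-\Delta w=Q_{\mathbb H}(x)|w|^{2^*-2}w$, $w\in D^{1,2}(\rn)$. But precisely at that reduction there is a gap. You claim that testing with $w_1$ on the defocusing region ``forces it to vanish there''. Integrating $-\Delta w_1=-|w_1|^{2^*-2}w_1$ against $w_1$ over $\rn\smallsetminus\overline{\mathbb H}$ produces the interface term $\int_{\partial\mathbb H}w_1\,\partial_\nu w_1$, which has no sign and cannot be discarded, since at this stage you do not know that $w_1$ vanishes on the hyperplane. (The testing argument does work in your ``empty set'' case, where the defocusing equation holds on all of $\rn$ and there is no boundary; that is the easy case.) The paper closes exactly this gap with the Poho\v{z}aev identity of Theorem \ref{thm:pohozhaev}, proved for possibly unbounded $\t$ precisely for this purpose: since the half-space is strictly starshaped with respect to any of its interior points, the identity gives $\int_{\partial\mathbb H}|w_1|^{2^*}\,\zeta\cdot\nu=0$, hence $w_1=0$ on $\partial\mathbb H$ \emph{first}; then $w_1$ solves the Dirichlet problem in $\mathbb H$, which is killed by the classical Poho\v{z}aev identity plus unique continuation, and only after that does testing in the complement become legitimate (Corollary \ref{cor:nonexistence}). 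So the two steps of your nonexistence argument must be run in the opposite order, and the first one requires the Poho\v{z}aev identity rather than an energy test.

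Two further points, less serious. First, the remainder $v_k^1=u_k-u$ is a Palais--Smale sequence for $J_0$ (the functional still carrying the weight $Q_\o$), not for $J_\infty$ as you write; this is not cosmetic, because if it were a Palais--Smale sequence for $J_\infty$ nothing would confine the bubbles to $\o$ and item $(i)$ would be vacuous. Your later discussion shows you are aware the weight persists, but the statement as written is inconsistent with the rest of your argument. Second, the paper builds the localization into the selection of $(\eps_k,\xi_k)$ by using the weighted concentration function $\Phi_k(r)=\sup_{x}\int_{B_r(x)}\mathds 1_\o|u_k|^{2^*}$, which automatically places the centers within distance $\eps_k$ of $\o$ and reduces Step 3 of Lemma \ref{lem:struwe} to a clean two-case dichotomy; with the standard unweighted concentration function you must additionally rule out centers escaping into $\rn\smallsetminus\overline\o$ at scale $\gg\eps_k$, which your ``empty set'' branch does handle, but the weighted choice is the cleaner route.
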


The following result provides the induction step to prove Theorem \ref{thm:struwe}.

\begin{lemma}\label{lem:struwe}
Let $(u_k)$ be a sequence in $D^{1,2}(\mathbb{R}^{N})$ such that $u_k \rightharpoonup 0$ weakly but not strongly in $D^{1,2}(\mathbb{R}^{N})$, 
$$J_0(u_k) \to c \qquad \text{and} \qquad J'_0(u_k) \to 0\text{ \ in \ }(D^{1,2}(\rn))'.$$
Then, passing to a subsequence, there exist a sequence $(\xi_k)$ in $\Omega$, a sequence $(\varepsilon_k)$ in $(0, \infty)$ and a nontrivial  solution $w$ to the limit problem \eqref{eq:limit problem} with the following properties:
\begin{itemize}
\item[$(i)$] $\varepsilon_k^{-1} \dist(\xi_k, \partial \Omega) \to \infty$ as $k \to \infty$.
\item[$(ii)$] There exists $\delta>0$ such that
$$\int_{B_{\eps_k}(\xi_k)}|u_k|^{2^*}=\delta\qquad\text{for every \ }k\in\n.$$
\item[$(iii)$] $\what{u}_k \rightharpoonup 0$ weakly in $D^{1,2}(\mathbb{R}^{N})$, \ $J_0(\what{u}_k) \to c - J_{\infty}(w)$ \ and \ $J_0'(\what{u}_k) \to 0$, where 
$$\what{u}_k(x):=u_{k}(x)- w_{\eps_k,\xi_k}(x).$$
\end{itemize}
\end{lemma}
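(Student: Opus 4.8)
The plan is to follow Struwe's profile‑decomposition induction step, see \cite[Theorem III.3.1]{s}, adapting the concentration analysis to the sign‑changing weight $Q_\o$. First I would record boundedness of $(u_k)$: since $J_0(u_k)-\tfrac12 J_0'(u_k)u_k=\tfrac1N\irn Q_\o|u_k|^{2^*}$, one gets $\irn Q_\o|u_k|^{2^*}=Nc+o(1)+o(\|u_k\|)$ and then $\tfrac12\|u_k\|^2=J_0(u_k)+\tfrac1{2^*}\irn Q_\o|u_k|^{2^*}$ is bounded up to $o(\|u_k\|)$; boundedness follows, and $\|u_k\|^2-\irn Q_\o|u_k|^{2^*}=J_0'(u_k)u_k\to 0$. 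As $u_k\rightharpoonup 0$ but not strongly, after a subsequence $\|u_k\|^2\to\ell^2>0$, hence $\irn Q_\o|u_k|^{2^*}\to\ell^2$, so $\io|u_k|^{2^*}\ge\irn Q_\o|u_k|^{2^*}$ is bounded below. Because $\o$ is \emph{bounded}, covering $\overline\o$ by finitely many balls of a fixed radius $\rho$ shows the Lévy concentration function $Q_k(r):=\sup_\xi\int_{B_r(\xi)}|u_k|^{2^*}$ satisfies $Q_k(\rho)\ge\beta>0$ for all large $k$ (no vanishing). Fix $\delta\in(0,\beta)$ below the universal $\varepsilon$‑regularity threshold for the critical problem, and for each $k$ choose $\eps_k\in(0,\rho)$ with $Q_k(\eps_k)=\delta$ and $\xi_k$ attaining the supremum, so $\int_{B_{\eps_k}(\xi_k)}|u_k|^{2^*}=\delta$ — this is exactly $(ii)$. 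Put $\Omega_k:=\eps_k^{-1}(\Omega-\xi_k)$ and $\tilde u_k(x):=\eps_k^{(N-2)/2}u_k(\eps_k x+\xi_k)$. Since $J_0$, its derivative, and the norms are invariant under this rescaling, $\tilde u_k$ is a bounded Palais–Smale sequence for $J_{\Omega_k}(v):=\tfrac12\|v\|^2-\tfrac1{2^*}\irn Q_{\Omega_k}|v|^{2^*}$, $\tilde u_k\rightharpoonup w$ along a subsequence, and $\sup_\xi\int_{B_1(\xi)}|\tilde u_k|^{2^*}=\int_{B_1(0)}|\tilde u_k|^{2^*}=\delta$. As $\delta$ is below the threshold, Struwe's local‑compactness argument gives $\tilde u_k\to w$ in $L^{2^*}_{loc}(\rn)$; hence $\int_{B_1(0)}|w|^{2^*}=\delta>0$, so $w\neq 0$, and one may pass to the limit in the equation on every ball.

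The second step locates the concentration and forces $Q_{\Omega_k}\to 1$. If $\eps_k\not\to 0$, then (subsequence) $\eps_k\to\bar\eps>0$; if $\xi_k$ stays bounded the de‑scaling operator converges and $\tilde u_k\rightharpoonup 0$, whereas if $\xi_k\to\infty$ then $\dist(0,\Omega_k)=\eps_k^{-1}\dist(\xi_k,\o)\to\infty$ ($\o$ bounded), so $Q_{\Omega_k}\to-1$ locally, $w$ solves $-\Delta w=-|w|^{2^*-2}w$ in $\rn$, and $w=0$; either way this contradicts $w\neq0$, so $\eps_k\to 0$. The same receding‑domain argument shows $\xi_k$ is bounded, $\xi_k\to\xi_0\in\overline\o$. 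If $\xi_0\in\o$, then $B_\rho(\xi_0)\subset\o$ for some $\rho>0$, so $\Omega_k\supset B_{\rho/(2\eps_k)}(0)$ and $\eps_k^{-1}\dist(\xi_k,\partial\o)\to\infty$. If $\xi_0\in\partial\o$, set $t_k:=\eps_k^{-1}\dist(\xi_k,\partial\o)$: when $t_k\to\infty$ with $\xi_k\in\o$ again $\Omega_k\supset B_{t_k}(0)$; when $t_k\to\infty$ with $\xi_k\notin\overline\o$, or when $t_k\to d\in(0,\infty)$, the smooth boundary of $\o$ flattens under the rescaling and $\Omega_k$ converges locally to an open half‑space $H$ which — or whose reflection across $\partial H$ — is strictly starshaped with respect to the origin with nonempty complement, so Corollary \ref{cor:nonexistence} gives $w=0$, a contradiction. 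The remaining case $t_k\to 0$ is ruled out by re‑centering: taking the nearest $\xi_k'\in\partial\o$ and $\xi_k^{*}:=\xi_k'-\tfrac{\eps_k}{2}\nu_\o(\xi_k')\in\o$, the de‑scaling at $(\eps_k,\xi_k^{*})$ has $\dist(\xi_k^{*},\partial\o)/\eps_k\to\tfrac12$, so its weak limit $w^{*}$ vanishes by the half‑space case, while $B_{\eps_k}(\xi_k)\subset B_{2\eps_k}(\xi_k^{*})$ and $\sup_\xi\int_{B_1(\xi)}|\tilde u_k^{*}|^{2^*}\le Q_k(\eps_k)=\delta$ give, via local compactness, $\delta\le\int_{B_2(0)}|\tilde u_k^{*}|^{2^*}\to\int_{B_2(0)}|w^{*}|^{2^*}=0$, a contradiction. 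Hence $\xi_k\in\o$ for large $k$, $\eps_k^{-1}\dist(\xi_k,\partial\o)\to\infty$ — which is $(i)$ — $Q_{\Omega_k}\to 1$ a.e., and $w$ is a nontrivial solution of the limit problem \eqref{eq:limit problem}.

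For $(iii)$, write $\hat u_k:=u_k-w_{\eps_k,\xi_k}$. Since $\eps_k\to 0$ and $\xi_k$ is bounded, the rescaled bubble carries vanishing gradient energy away from $\xi_0$, so any weak limit of $w_{\eps_k,\xi_k}$ is harmonic on $\rn\smallsetminus\{\xi_0\}$ and lies in $D^{1,2}(\rn)$, hence is $0$ (a point has zero capacity); therefore $\hat u_k\rightharpoonup 0$. De‑scaling by $(\eps_k,\xi_k)$ maps $\hat u_k$ to $z_k:=\tilde u_k-w\rightharpoonup 0$, and by the scaling invariance $J_0(\hat u_k)=J_{\Omega_k}(z_k)$, $\|J_0'(\hat u_k)\|_{(D^{1,2}(\rn))'}=\|J_{\Omega_k}'(z_k)\|_{(D^{1,2}(\rn))'}$. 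The Brezis–Lieb lemma — applicable with the bounded weight $Q_{\Omega_k}$, using $\tilde u_k\to w$ a.e. — together with $\langle z_k,w\rangle\to 0$, $Q_{\Omega_k}\to 1$ a.e., and $\irn Q_{\Omega_k}|\tilde u_k|^{2^*}=\irn Q_\o|u_k|^{2^*}$ yields $J_{\Omega_k}(z_k)=J_0(u_k)-J_\infty(w)+o(1)\to c-J_\infty(w)$. The analogous splitting for the derivative, using $J_\infty'(w)=0$ (so the terms $-\langle w,\cdot\rangle+\irn|w|^{2^*-2}w\,(\cdot)$ cancel) and the uniform control of $(Q_{\Omega_k}-1)|w|^{2^*-2}w$ coming from $\Omega_k\supset B_{R_k}(0)$ with $R_k\to\infty$, gives $J_{\Omega_k}'(z_k)\to 0$ in $(D^{1,2}(\rn))'$. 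This proves $(iii)$.

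The main obstacle is the location step — showing blow‑up cannot sit on or near $\partial\o$ at the concentration scale. Concentration strictly outside $\overline\o$, or at a half‑space not through the origin, is killed cheaply: the first reduces to the defocusing equation on $\rn$, the second to Corollary \ref{cor:nonexistence} (after a reflection if needed). The delicate configuration is $\eps_k^{-1}\dist(\xi_k,\partial\o)\to 0$, where the limiting domain is a half‑space \emph{through} the origin, to which Corollary \ref{cor:nonexistence} does not apply; this is resolved by the re‑centering device, which trades that half‑space for a strictly starshaped one while keeping enough $L^{2^*}$‑mass to contradict local compactness. A secondary point, absent in the classical bounded‑domain setting, is that mass could a priori escape to spatial infinity; this is excluded precisely because $\o$ is bounded, so any escaping sequence of centers makes $\Omega_k$ recede and forces the profile to solve the defocusing problem, hence to vanish.
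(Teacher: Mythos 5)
Your proposal is correct and follows the same Struwe-type induction step as the paper: concentration function, rescaling at the concentration scale, exclusion of boundary blow-up via the half-space problem and Corollary \ref{cor:nonexistence}, and a Brezis--Lieb splitting for $(iii)$. Two local deviations are worth noting. First, you use the full Lévy concentration function $\sup_\xi\int_{B_r(\xi)}|u_k|^{2^*}$, whereas the paper uses the $\Omega$-restricted one $\sup_x\int_{B_r(x)}\mathds 1_\Omega|u_k|^{2^*}$; the restricted version forces $\dist(\zeta_k,\Omega)\le\eps_k$ automatically, so the paper never has to rule out centers escaping to infinity or concentration strictly outside $\overline\Omega$. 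Your version does require those extra cases, and you handle them correctly (the receding domain makes $Q_{\Omega_k}\to-1$, the limit profile solves the defocusing equation and vanishes, contradicting $w\neq0$). Second, your re-centering device for the case $\eps_k^{-1}\dist(\xi_k,\partial\Omega)\to0$ is an unnecessary detour: a half-space through the origin \emph{is} strictly starshaped with respect to any of its interior points, and the hypothesis of Corollary \ref{cor:nonexistence} is translation-invariant, so the corollary applies directly to the limiting half-space problem in every boundary subcase --- this is exactly how the paper disposes of alternative (1) in one stroke. Your workaround is valid but rests on the mistaken premise that the corollary fails there. Everything else (the cut-off argument below the threshold $\delta<\tfrac12 S^{N/2}$, the exclusion of $\eps_k\not\to0$, and the energy and derivative splittings in $(iii)$) matches the paper's proof.
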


\begin{proof}	
Since $u_k \rightharpoonup 0$ weakly in $D^{1,2}(\mathbb{R}^N)$, the sequence $(u_k)$ is bounded in $D^{1,2}(\mathbb{R}^N)$. Therefore, $J'_0(u_k) u_k \to 0$, which implies
\[
o(1) + \| u_k \|^2 = \int_{\mathbb{R}^N} Q_\o(x) |u_k|^{2^*}.
\]
Thus, 
\begin{equation}\label{ecuq1}
c = \lim_{k \to \infty} J_0(u_k) = \lim_{k \to \infty} \frac{1}{N} \int_{\mathbb{R}^{N}} Q_\o(x) |u_k|^{2^*}= \lim_{k \to \infty} \frac{1}{N} \|u_k\|^2 \geq 0
\end{equation}
and, since $u_k$ does not converge strongly to $0$ in $D^{1,2}(\mathbb{R}^N)$, we conclude that $c > 0$. The equation \eqref{ecuq1} allows us to write
\begin{equation}\label{Ecu1}
	Nc+\int_{\mathbb{R}^{N}\smallsetminus \Omega}|u_{k}|^{2^{*}}+o(1)	=\int_{\Omega}|u_{k}|^{2^{*}}.
\end{equation}

\textbf{Step 1:} Selection of the elements \( \xi_k \) and $\eps_k $.

Set $\delta\in (0,\frac{1}{2}S^{N/2})$. For each $k\in\mathbb{N}$, consider the function $\Phi_{k}:[0,\infty)\rightarrow \mathbb{R}$, defined by
\begin{equation*}
\Phi_{k}(r):=\sup_{x\in\mathbb{R}^{N}}\int_{B_{r}(x)}\mathds 1_{\Omega}|u_{k}|^{2^{*}},
\end{equation*}
which is continuous and nondecreasing. Additionally, it satisfies $\Phi_{k}(0)=0$ and $\Phi_{k}(r)=\int_{\Omega}|u_{k}|^{2^{*}}$ for all $r\geq \frac{R}{2}$, where $R$ is the diameter of $\Omega$. From equation \eqref{Ecu1} we conclude that for sufficiently large $k$, there exist  $\eps_{k}\in (0,\frac{R}{2})$ and $\zeta_{k}\in \mathbb{R}^{N}$ such that
\begin{equation*}
0<\delta=\Phi_{k}(\eps_{k})=\sup_{x\in\mathbb{R}^{N}}\int_{B_{\eps_{k}}(x)}\mathds 1_{\Omega}|u_{k}|^{2^{*}}=\int_{B_{\eps_{k}}(\zeta_{k})}\mathds 1_{\Omega}|u_{k}|^{2^{*}}.
\end{equation*}
It follows that $\dist(\zeta_{k},\Omega)\leq \eps_{k}$. Hence, the sequence  $(\zeta_{k})$ is bounded in $\mathbb{R}^{N}$. After passing to a subsequence we get 
\begin{equation*}
d_{k}:=\eps_{k}^{-1}\mbox{dist}(\zeta_{k},\partial\Omega)\rightarrow d\in [0,\infty].
\end{equation*}
We have the following two possibilities:
\begin{itemize}
\item[(1)] If $d\in[0,\infty)$, for each $k$ we choose $\xi_{k}\in \partial\Omega$ such that $\eps_{k}^{-1}|\zeta_{k}-\xi_{k}|=d_{k}$. 
\item [(2)] If $d=\infty$, then $\zeta_{k}\in\Omega$. In this case, we set $\xi_{k}:=\zeta_{k}$.
\end{itemize}
If	$d\in[0,\infty)$ we choose $C_{0}>d+1$,  and if $d=\infty$ we set $C_{0}=1$. In both cases, for sufficiently large $k$, we have that $B_{\eps_{k}}(\zeta_{k})\subset B_{C_{0}\eps_{k}}(\xi_{k})$. Thus,
\begin{equation}\label{Ecu2}
\delta=\sup_{x\in\mathbb{R}^{N}}\int_{B_{\eps_{k}}(x)}1_{\Omega}|u_{k}|^{2^{*}}=\int_{B_{\eps_{k}}(\zeta_{k})}1_{\Omega}|u_{k}|^{2^{*}}\leq \int_{B_{C_{0}\eps_{k}}(\xi_{k})}1_{\Omega}|u_{k}|^{2^{*}}.
\end{equation}

\textbf{Step 2:}	Existence of \( w \neq 0 \). 

We define
\begin{equation}
w_{k}(x) := \eps_{k}^{\frac{N-2}{2}} u_{k}(\eps_{k} x + \xi_{k}).
\end{equation}
By performing a change of variables, from equation \eqref{Ecu2}, we obtain
\begin{equation}\label{Ecu4}
\delta=\sup_{z\in\mathbb{R}^{N}}\int_{B_{1}(z)}\mathds 1_{\Omega_k}|w_{k}|^{2^{*}}\leq \int_{B_{C_{0}}(0)}\mathds 1_{\Omega_k}|w_{k}|^{2^{*}}\leq \int_{B_{C_{0}}(0)}|w_{k}|^{2^{*}},
\end{equation}
where $\Omega_k:=\{z\in\mathbb{R}^{N}: \eps_{k} z+\xi_{k}\in\Omega\}$. Since $\|w_{k}\| = \|u_{k}\|$, \ \( (w_k) \) is a bounded sequence in \( D^{1,2}(\mathbb{R}^N) \) and, passing to a subsequence, we have that \ $w_k \rightharpoonup w$ weakly in $D^{1,2}(\mathbb{R}^N)$, \ $w_k \to w$ in $L_{\text{loc}}^2(\mathbb{R}^N)$ \ and \ $w_k \to w$ a.e. in $\mathbb{R}^N$. 

We will now prove that $w \neq 0$. Arguing by contradiction, suppose $w = 0$. Let $\varphi \in \cC_c^\infty(\mathbb{R}^N)$ and set $\varphi_k(x) := \varphi \big( \frac{x - \xi_k}{\epsilon_k} \big)$. It follows that 
\begin{align}\label{Ecu3}
\int_{\mathbb{R}^{N}} \nabla w_{k} \cdot \nabla (\varphi^{2} w_{k}) - \int_{\mathbb{R}^N}Q_{\Omega_k} \varphi^2 |w_{k}|^{2^{*}} = \int_{\mathbb{R}^N} \nabla u_k \cdot \nabla (\varphi_{k}^2 u_k) - \int_{\mathbb{R}^N}Q_\o|u_{k}|^{2^{*}-2}u_{k}(\varphi_{k}^{2}u_{k})=o(1),
\end{align}
because $(\varphi_{k}^2 u_k)$ is bounded in  $D^{1,2}(\mathbb{R}^{N})$ and $J'_0( u_k) \to 0$. Here we use the notation  \eqref{eq:Q_t}, i.e., $Q_{\o_k}:=\mathds 1_{\o_k} - \mathds 1_{\rn\smallsetminus\o_k}$. If $\varphi \in \cC_{c}^\infty(B_{1}(z))$ for some \( z \in \mathbb{R}^N, \) then, since \( w_k \to 0 \) in \( L^2_{\text{loc}}(\mathbb{R}^N), \) using identity \eqref{Ecu3}, Hölder's inequality, identity \eqref{Ecu4}, and that \( \delta < \frac{1}{2}S^{N/2},\) we obtain 
\begin{align*}
\int_{\mathbb{R}^N} |\nabla (\varphi w_k)|^2 &= \int_{\mathbb{R}^N} \varphi^{2}| \nabla w_k|^2 + \int_{\mathbb{R}^N} 2\varphi w_k \nabla w_{k}\cdot \nabla \varphi + \int_{\mathbb{R}^N} w_k^2 |\nabla \varphi|^2 \\
&=\int_{\mathbb{R}^N} \nabla w_{k}\cdot \nabla (\varphi^{2} w_{k})+o(1)=\int_{B_{1}(z)}Q_{\Omega_k} |\vp w_k|^2|w_{k}|^{2^{*}-2}+o(1)\\
&\leq \int_{B_{1}(z)} \mathds 1_{\o_k} |\varphi w_{k}|^2 |w_{k}|^{2^{*}-2}+o(1)\\
&\leq \Big(\int_{B_{1}(z)} \mathds 1_{\o_k} | w_{k}|^{2^{*}} \Big)^{\frac{2^{*}-2}{2^{*}}}\Big(\int_{B_{1}(z)} \mathds 1_{\o_k} |\varphi w_{k}|^{2^{*}} \Big)^{\frac{2}{2^{*}}} +o(1)\\  
&\leq \delta^\frac{2}{N}S^{-1}\int_{\mathbb{R}^{N}}  |\nabla(\varphi w_{k})|^{2}  +o(1)\\
&< \left(\tfrac{1}{2} \right) ^{\frac{2}{N}}\int_{\mathbb{R}^{N}}  |\nabla(\varphi w_{k})|^{2}  +o(1).
\end{align*}
As a result, $\|\varphi w_k\|^2 = o(1)$ for every $\varphi\in \cC^{\infty}_{c}(B_{1}(z))$ and every $z\in \mathbb{R}^{N}$. Let $z_1, \dots, z_m \in \mathbb{R}^N$ be such that $B_{C_{0}}(0) \subset B_{1/2}(z_1) \cup \dots \cup B_{1/2}(z_m)$ and choose $\varphi_i \in \cC_c^\infty (B_1(z_i))$ such that $\varphi_i \geq 0$ and $\varphi_i(z) = 1$ for all $z \in B_{1/2}(z_i)$. Then,
\[
\int_{B_{C_0}(0)} |w_k|^{2^*} \leq \sum_{i=1}^m \int_{B_1(z_i)} |\varphi_i w_k|^{2^*} = o(1),
\]
which contradicts the statement \eqref{Ecu4}. Thus, $w \neq 0$.

As a consequence, after passing to a subsequence, $\varepsilon_k \to 0$. Indeed, recall that the sequences $(\xi_k)$ and $(\varepsilon_k)$ are bounded. If $\varepsilon_k \to \varepsilon>0$ and $\xi_k \to \xi$, then, as $u_k \rightharpoonup 0$ and $w_k \rightharpoonup w\neq 0$ weakly in $D^{1,2}(\mathbb{R}^N)$, we obtain that
\begin{align*}
0 &< \|w\|^2 = \lim_{k \to \infty} \int_{\mathbb{R}^N} \nabla w_k \cdot \nabla w = \lim_{k \to \infty} \int_{\mathbb{R}^N} \nabla u_k \cdot \nabla w_{\varepsilon_k, \xi_k}\\
		&= \lim_{k \to \infty} \int_{\mathbb{R}^N} \nabla u_k \cdot \nabla w_{\varepsilon, \xi} + \lim_{k \to \infty} \int_{\mathbb{R}^N} \nabla u_k \cdot \nabla (w_{\varepsilon_k, \xi_k} - w_{\varepsilon, \xi}) = o(1),
\end{align*}
which is a contradiction.

\textbf{Step 3:} We show that $\xi_k\in\o$, \ $\eps_k^{-1}\dist(\xi_k,\partial\o)\to\infty$ \ and \ $w$ solves \eqref{eq:limit problem}.

To this end, we analyze the alternatives (1) and (2) stated in step 1. We will show that (1) leads to a contradiction and, thus, (2) must hold true.

Alternative (1) says that $\xi_k \in \partial \Omega$. Then, by passing to a subsequence, we have that $\xi_k \to \xi\in\partial\Omega$. Let $\nu_\xi$ be the outer unit normal vector at $\xi$ and
$$\mathbb{H} := \{ z \in \mathbb{R}^N : \nu_\xi \cdot z < 0 \}.$$
Since $\eps_k\to 0$ we have that $\mathds 1_{\Omega_k}\rightarrow \mathds 1_{\mathbb{H}}$ and $\mathds 1_{\mathbb{R}^{N}\smallsetminus\overline{\Omega}_k}\rightarrow\mathds 1_{\mathbb{R}^{N}\smallsetminus\overline{\mathbb{H}}}$ pointwise in $\rn$.  Hence, for any $\varphi\in \cC_{c}^{\infty}(\mathbb{R}^{N})$, we have that
\begin{align*}
o(1)&=J'_0(u_{k})[\varphi_{\eps_{k},\xi_{k}}]=\int_{\mathbb{R}^{N}}\nabla u_{k}\cdot \nabla\varphi_{\eps_{k},\xi_{k}} -	\int_{\mathbb{R}^{N}}Q_\o|u_{k}|^{2^{*}-2}u_{k}\varphi_{\eps_{k},\xi_{k}}\\
&=\int_{\mathbb{R}^{N}}\nabla w_{k}\cdot \nabla\varphi -	\int_{\mathbb{R}^{N}}Q_{\Omega_k}|w_{k}|^{2^{*}-2}w_{k}\varphi =	\int_{\mathbb{R}^{N}}\nabla w\cdot \nabla\varphi -	\int_{\mathbb{R}^{N}}Q_{\mathbb{H}}|w|^{2^{*}-2}w\varphi.
\end{align*}	
Thus, if (1) holds true, then $w\neq 0$ is a solution to
\begin{equation*}
-\Delta w =Q_{\mathbb{H}}(x) |w|^{2^*-2} w, \qquad w \in D^{1,2}(\R^N).
\end{equation*}
This contradicts Corollary \ref{cor:nonexistence}. 
	
Therefore, alternative (2) must be true, i.e., \( \xi_k \in \Omega \) and \( \eps_k^{-1} \dist(\xi_k, \partial \Omega) \to \infty \). In this case, $\mathds 1_{\Omega_k}\rightarrow\mathds 1_{\mathbb{R}^{N}}$ and $\mathds 1_{\mathbb{R}^{N}\smallsetminus\overline{\Omega}_k}\rightarrow 0$ pointwise in $\rn$. Arguing as above, for any  $\varphi\in \cC_{c}^{\infty}(\mathbb{R}^{N})$, we get that
\begin{equation*}
o(1)=\int_{\mathbb{R}^{N}}\nabla w\cdot \nabla\varphi -	\int_{\mathbb{R}^{N}}|w|^{2^{*}-2}w\varphi.
\end{equation*}	
This proves that $w$ is a solution to the limit problem \eqref{eq:limit problem}, as claimed.

\textbf{Step 4:} We show that $\what{u}_k(x):=u_{k}(x)-w_{\eps_k,\xi_k}(x)$ converges weakly to $0$ in $D^{1,2}(\mathbb{R}^N)$, $J_0(\what{u}_k) \to c - J_{\infty}(w)$ and $ J_0'(\what{u}_k) \to 0$.

Since $u_k \rightharpoonup 0$ and $w_{\eps_{k}, \xi_{k}} \rightharpoonup 0$ weakly in $D^{1,2}(\mathbb{R}^N)$, the first statement is clear. 

On the other hand, as $w_k \rightharpoonup w$ weakly in $D^{1,2}(\rn)$, we get
\begin{equation}\label{eq:J1}
\|u_k\|^2 = \|w_k\|^2 = \|w_k - w\|^2 + \|w\|^2 + o(1)=\|u_k - w_{\varepsilon_k, \xi_k}\|^2 + \|w\|^2 + o(1).
\end{equation}
From the proof of the Brezis-Lieb lemma one sees that the right-hand side of the inequality
$$\left|\irn Q_{\o_k}|w_k|^{2^*}-Q_{\o_k}|w_k-w|^{2^*}-Q_{\o_k}|w|^{2^*}\right|\leq \irn\left||w_k|^{2^*}-|w_k-w|^{2^*}-|w|^{2^*}\right|$$
tends to $0$ as $k\to\infty$, see \cite[Proof of Theorem 2]{bl} or \cite[Proof of Lemma 1.32]{w}. So, as $\mathds 1_{\Omega_k}\rightarrow\mathds 1_{\mathbb{R}^{N}}$, we obtain
\begin{align}\label{eq:J2}
\irn Q_\o|u_{k}|^{2^*}&=\irn Q_{\Omega_{k}}|w_{k}|^{2^*}=\irn Q_{\Omega_k}|w_{k}-w|^{2^*}+\irn Q_{\Omega_k}|w|^{2^*}+o(1)\nonumber\\
&=\irn Q_\o|u_{k}-w_{\eps_{k},\xi_{k}}|^{2^*}+\irn |w|^{2^*}+o(1).
\end{align}
Equations \eqref{eq:J1} and \eqref{eq:J2} yield
\begin{equation*}
	c+o(1)=J_0(u_k)=J_0(u_k - w_{\eps_k, \xi_k})+ J_\infty(w)+o(1).
\end{equation*}
Finally, following the proof of \cite[Lemma 8.9]{w} one shows that
$$Q_{\Omega_k}|w_k|^{2^*-2} w_k- Q_{\Omega_k}|w_k - w|^{2^*-2} (w_k - w) - Q_{\Omega_k}|w|^{2^*-2} w\to 0\quad\text{in \ }(D^{1,2}(\rn))'.$$
Then, recalling that $w$ solves \eqref{eq:limit problem}, we obtain that
$$0=\lim_{k\to\infty}J_0'(u_k)=\lim_{k\to\infty}J_0'(u_k-w_{\eps_k,\xi_k})+J_\infty(w)=\lim_{k\to\infty}J_0'(u_k-w_{\eps_k,\xi_k})\quad\text{in \ }(D^{1,2}(\rn))'.$$
This completes the proof.
\end{proof}
\smallskip

\begin{proof}[Proof of Theorem \ref{thm:struwe}]
Let $(u_k)$ be a sequence in $D^{1,2}(\R^N)$ such that $J_\lambda(u_k) \to c$ and $J'_\lambda(u_k) \to 0$. Then, there exists a constant $C > 0$ such that
\[
\tfrac{2}{N-2} \|u_k\|^2_\lambda = 2^* J_\lambda(u_k) - J'_\lambda(u_k) u_k \leq C(1 + \|u_k\|_\lambda) \quad \forall k \in \mathbb{N}.
\]
This shows that the sequence $(u_k)$ is bounded in $D^{1,2}(\R^N)$. Therefore, $J'_\lambda(u_k) u_k \to 0$ and, as a consequence,
\begin{equation}\label{eq:c>0}
c = \lim_{k \to \infty} J_{\lambda}(u_k)= \lim_{k \to \infty} \frac{1}{N} \int_{\mathbb{R}^{N}} Q_\o |u_k|^{2^*}=\lim_{k \to \infty} \frac{1}{N} \|u_k\|^2_\lambda \geq 0.
\end{equation}
Besides, a subsequence of $(u_k)$ satisfies $u_k \rightharpoonup u $ weakly in $D^{1,2}(\R^N)$, $u_k \to u$ in $L_{\text{loc}}^2(\mathbb{R}^N)$ and $u_k \to u$ a.e. in $\mathbb{R}^N$. Therefore, 
\begin{equation*}
0 = \lim_{k \to \infty} J'_\lambda(u_k)\varphi=J'_\lambda(u)\varphi\qquad\text{for every \ }\vp\in\cC^\infty_c(\rn).
\end{equation*} 
This shows that \( u \) is a solution to problem \eqref{eq:problem}, which could be the trivial one.

Define \( v_{1,k} := u_k - u \). Then, $v_{1,k}\rightharpoonup 0$ weakly in $D^{1,2}(\R^N)$, $v_{1,k}\to 0$ in $L_{\text{loc}}^2(\mathbb{R}^N)$ and $v_{1,k}\to 0$ a.e. in $\mathbb{R}^N$. Since $\o$ is bounded,
\begin{equation*}
\lim_{k \to \infty}  \int_{\mathbb{R}^N}\lambda\mathds 1_{\Omega} u_k^2= \int_{\mathbb{R}^N} \lambda\mathds 1_{\Omega} u^2.
\end{equation*}
Then, employing similar arguments to those in step 4 of the proof of Lemma \ref{lem:struwe} we show that
\begin{equation*}
c+o(1)=J_\lambda(u_k)=J_0(v_{1,k})+J_\lambda(u)+o(1)\qquad
\end{equation*}
and
$$0=\lim_{k \to \infty}J_\lambda'(u_k)=\lim_{k \to \infty}J_0'(v_{1,k})+J_\lambda'(u)=\lim_{k \to \infty}J_0'(v_{1,k}),$$
i.e., $J_0(v_{1,k})\to c-J_\lambda(u)=:c_1$ and $J_0'(v_{1,k})\to 0$ in $(D^{1,2}(\R^N))'$.

If \( v_{1,k} \to 0 \) strongly in $D^{1,2}(\R^N)$, then  $u_k \to u$ strongly in $D^{1,2}(\R^N)$ and  $J_\lambda(u) = c$. Thus, Theorem \ref{thm:struwe} holds true with $m = 0$. 

If $v_{1,k} \rightharpoonup 0$ weakly but not strongly in $D^{1,2}(\rn)$, then $c_1>0$. We apply Lemma \ref{lem:struwe} to \( (v_{1,k}) \) and continue applying it until we obtain a sequence $(v_{m,k})$ that converges strongly to $0$. This process must terminate after a finite number of steps, since at each step, the energy level $c$ decreases by at least $\frac{1}{N}S^\frac{N}{2}$ and $c$ cannot be negative, as shown in \eqref{eq:c>0}.
\end{proof}
\smallskip

\begin{proof}[Proof of Theorem \ref{thm:bn}]
Let $\lambda\in(-\Lambda_\o,0)$ and $(u_k)$ be a sequence in $\mathcal{N}_\lambda$ such that  $J_\lambda(u_k)\to c_\lambda$. By Ekeland's variational principle, we may assume that $J'_\lambda(u_k)\to 0$ in $(D^{1,2}(\R^N))'$, see \cite[Theorem 8.5]{w}.

Since $\lambda\in(-\Lambda_\o,0)$, Proposition \ref{prop:minimal energy} says that $c_\lambda<\frac{1}{N}S^\frac{N}{2}$. As $J_\infty(w)\geq\frac{1}{N}S^\frac{N}{2}$ for every nontrivial solution $w$ to the limit problem \eqref{eq:limit problem}, Theorem \ref{thm:struwe}$(iii)$ yields that $m=0$. Then, by Theorem \ref{thm:struwe}$(ii)$, we have that $u_k\to u$ strongly in $D^{1,2}(\rn)$. Therefore, $J_\lambda(u)=c_\lambda$ and $J_\lambda'(u)=0$.

As $|u|$ also satisfies $J_\lambda(|u|)=c_\lambda$ and $J_\lambda'(|u|)=0$, we may assume that $u\geq 0$. By standard elliptic regularity (see \cite[Lemma 3.3]{chs}), $u\in W^{2,r}_{loc}(\rn)\cap \cC^{1,\alpha}_{loc}(\rn)$ for all $r\in[1,\infty)$ and $\alpha\in(0,1)$. Then, the strong maximum principle for strong solutions \cite[Theorem 9.6]{gt} applied to the equation
$$-\Delta u + \mathds 1_{\rn\smallsetminus\o} u^{2^*-1} = -\lambda\mathds 1_\o u + \mathds 1_\o u^{2^*-1}$$
yields that $u>0$ in $\rn$.
\end{proof}

For $\lambda=0$ Theorem \ref{thm:struwe} yields the following results.

\begin{corollary}\label{cor:ps}
Let $\lambda=0$. Then, every sequence $(u_k)$ in $D^{1,2}(\rn)$ such that 
$$J_0(u_k)\to c\in\big(\tfrac{1}{N}S^\frac{N}{2},\tfrac{2}{N}S^\frac{N}{2}\big)\qquad\text{and}\qquad J'_0(u_k)\to 0\text{ \ in \ }(D^{1,2}(\rn))'$$
contains a convergent subsequence.
\end{corollary}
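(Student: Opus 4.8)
The plan is to invoke Theorem \ref{thm:struwe} and then exploit the energy identity it yields together with the upper bound $c<\tfrac2N S^\frac N2$. First I would apply Theorem \ref{thm:struwe} with $\lambda=0$ to the given Palais--Smale sequence $(u_k)$ at level $c\in(\tfrac1N S^\frac N2,\tfrac2N S^\frac N2)$: after passing to a subsequence we obtain a (possibly trivial) solution $u$ of \eqref{eq:problem} with $\lambda=0$, an integer $m\geq 0$, sequences $(\xi_{i,k})$ in $\o$ and $(\eps_{i,k})$ in $(0,\infty)$, and nontrivial solutions $w_1,\dots,w_m$ of the limit problem \eqref{eq:limit problem} with $\|u_k-u-\sum_{i=1}^m[w_i]_{\eps_{i,k},\xi_{i,k}}\|\to 0$ and $J_0(u)+\sum_{i=1}^m J_\infty(w_i)=c$. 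It then suffices to show that $m=0$, since in that case part $(ii)$ of the theorem gives $u_k\to u$ strongly in $D^{1,2}(\rn)$, which is the desired conclusion.

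The argument rests on two observations about the energies appearing in that identity. First, because $u$ solves \eqref{eq:problem} with $\lambda=0$ we have $J_0(u)=\tfrac1N\|u\|^2\geq 0$, and more precisely either $u\equiv 0$ (so $J_0(u)=0$) or $u\in\cN_0$ (so $J_0(u)\geq c_0=\tfrac1N S^\frac N2$, by Proposition \ref{prop:minimal energy}$(c)$). Second, every nontrivial solution $w$ of \eqref{eq:limit problem} lies on $\cN_\infty$, hence $J_\infty(w)\geq c_\infty=\tfrac1N S^\frac N2$, and I would upgrade this to the dichotomy: $J_\infty(w)=\tfrac1N S^\frac N2$ if $w$ has constant sign, while $J_\infty(w)\geq\tfrac2N S^\frac N2$ if $w$ changes sign. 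For the sign-changing case, testing $-\Delta w=|w|^{2^*-2}w$ against $w^\pm:=\max\{\pm w,0\}\in D^{1,2}(\rn)\smallsetminus\{0\}$ gives $\|w^\pm\|^2=|w^\pm|_{2^*}^{2^*}$, whence $\|w^\pm\|^2\geq S^\frac N2$ by Sobolev's inequality, so $J_\infty(w)=\tfrac1N(\|w^+\|^2+\|w^-\|^2)\geq\tfrac2N S^\frac N2$; the constant-sign case is the classical classification of the positive entire solutions of $-\Delta w=w^{2^*-1}$ as Aubin--Talenti instantons, which are Sobolev extremals (see, e.g., \cite[Chapter III]{s}). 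In particular no solution of \eqref{eq:limit problem} has energy in the open interval $(\tfrac1N S^\frac N2,\tfrac2N S^\frac N2)$.

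To finish, summing the bounds $J_0(u)\geq 0$ and $J_\infty(w_i)\geq\tfrac1N S^\frac N2$ in the energy identity gives $c\geq m\cdot\tfrac1N S^\frac N2$, so $c<\tfrac2N S^\frac N2$ forces $m\in\{0,1\}$. If $m=1$, then $J_\infty(w_1)=c-J_0(u)\leq c<\tfrac2N S^\frac N2$, so the dichotomy yields $J_\infty(w_1)=\tfrac1N S^\frac N2$ and hence $J_0(u)=c-\tfrac1N S^\frac N2>0$; thus $u\not\equiv 0$, so $J_0(u)\geq\tfrac1N S^\frac N2$, and then $c=J_0(u)+J_\infty(w_1)\geq\tfrac2N S^\frac N2$, a contradiction. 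Therefore $m=0$, and the subsequence $(u_k)$ converges strongly.

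The only nonroutine input is the energy dichotomy for nontrivial solutions of the limit problem --- that their energy is either exactly $\tfrac1N S^\frac N2$ or at least $\tfrac2N S^\frac N2$ --- and within it the constant-sign case, which relies on the classification of positive entire solutions of the critical equation; the sign-changing half is the elementary $w^\pm$-testing computation, and everything else is bookkeeping with the profile decomposition of Theorem \ref{thm:struwe}. I expect this dichotomy to be the main point to get right, and I would note that the open interval in the statement is exactly calibrated: $c>\tfrac1N S^\frac N2$ excludes a single ground-state bubble with $u\equiv 0$, and $c<\tfrac2N S^\frac N2$ excludes $m\geq 2$.
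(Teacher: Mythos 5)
Your proof is correct and follows essentially the same route as the paper: apply Theorem \ref{thm:struwe} and use the energy identity together with the fact that no nontrivial solution of the limit problem has energy in $(\tfrac1N S^{N/2},\tfrac2N S^{N/2})$ to force $m=0$. You merely spell out the case analysis ($m\le 1$, then exclude $m=1$ via the dichotomy and the lower bound $J_0(u)\ge \tfrac1N S^{N/2}$ for $u\not\equiv 0$) that the paper's one-line proof leaves implicit.
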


\begin{proof}
Since the limit problem \eqref{eq:limit problem} does not have a solution $w$ with $J_\infty(w)\in \big(\frac{1}{N}S^\frac{N}{2},\frac{2}{N}S^\frac{N}{2}\big)$, the only alternative left by Theorem \ref{thm:struwe} is that the problem \eqref{eq:problem} with $\lambda=0$ has a solution $u$ such that, after passing to a subsequence, $\|u_k-u\|\to 0$, as claimed.
\end{proof}

It is well known that the least energy solutions to the limit problem \eqref{eq:limit problem} are the positive bubbles
\begin{equation}\label{eq:bubble}
U_{\eps,\xi}(x):=(N(N-2))^\frac{N-2}{4}\frac{\eps^{{\frac{N-2}{2}}}}{(\eps^2+|x-\xi|^2)^\frac{N-2}{2}},
\qquad \xi\in\rn, \ \eps>0,
\end{equation}
and the negative ones $-U_{\eps,\xi}$.

\begin{corollary} \label{cor:minimizing}
Let $\lambda=0$ and $(u_k)$ be a sequence in $\mathcal{N}_0$ such that $J_0(u_k)\to c_0=\frac{1}{N}S^\frac{N}{2}$. Then, after passing to a subsequence, there exist $\eps_k>0$ and $\xi_k\in\o$ such that
$$\lim_{k \to \infty} \eps_k^{-1} \dist(\xi_k, \partial \Omega) = \infty$$
and
$$\text{either}\quad\lim_{k \to \infty} \|u_k - U_{\eps_k, \xi_k}\| = 0,\quad\text{or}\quad\lim_{k \to \infty} \|u_k + U_{\eps_k, \xi_k}\| = 0.$$
\end{corollary}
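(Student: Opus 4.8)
The plan is to derive the statement from the Palais--Smale decomposition of Theorem \ref{thm:struwe}, after replacing $(u_k)$ by a nearby Palais--Smale sequence, and then to use the non-attainment of $c_0$ (Proposition \ref{prop:minimal energy}$(c)$) to force that decomposition to consist of a single bubble.

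First I would reduce to a Palais--Smale sequence. Since $\cN_0$ is a complete $\cC^2$ submanifold of $D^{1,2}(\rn)$, a natural constraint on which $J_0$ is bounded below by $c_0>0$, Ekeland's variational principle (cf.\ \cite[Theorem 8.5]{w}) provides a sequence $(v_k)$ in $\cN_0$ with $\|u_k-v_k\|\to 0$, $J_0(v_k)\to c_0=\frac1N S^{N/2}$ and $J_0'(v_k)\to 0$ in $(D^{1,2}(\rn))'$. Because $\|u_k\mp U_{\eps_k,\xi_k}\|\le\|u_k-v_k\|+\|v_k\mp U_{\eps_k,\xi_k}\|$, it is enough to prove the conclusion for $(v_k)$.

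Next I would apply Theorem \ref{thm:struwe} to $(v_k)$ with $\lambda=0$ and $c=\frac1N S^{N/2}$ and count energies. Along a subsequence one obtains a (possibly trivial) solution $u$ of \eqref{eq:problem}, an integer $m\ge 0$, sequences $(\xi_{i,k})$ in $\o$ and $(\eps_{i,k})$ in $(0,\infty)$ with $\eps_{i,k}^{-1}\dist(\xi_{i,k},\partial\o)\to\infty$, and nontrivial solutions $w_1,\dots,w_m$ of \eqref{eq:limit problem}, satisfying $\|v_k-u-\sum_{i=1}^m[w_i]_{\eps_{i,k},\xi_{i,k}}\|\to 0$ and $J_0(u)+\sum_{i=1}^m J_\infty(w_i)=\frac1N S^{N/2}$. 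Since each $w_i\in\cN_\infty$, we have $J_\infty(w_i)\ge c_\infty=\frac1N S^{N/2}$; and if $u\not\equiv 0$ then $u\in\cN_0$, so $J_0(u)\ge c_0$, with equality impossible because $c_0$ is not attained, hence $J_0(u)>\frac1N S^{N/2}$. Plugging this into the energy identity forces $u\equiv 0$, and then $\sum_{i=1}^m J_\infty(w_i)=\frac1N S^{N/2}$ forces $m=1$ with $J_\infty(w_1)=c_\infty$. Thus $w_1$ is a least energy solution of \eqref{eq:limit problem}, so by the characterization recalled above (see \eqref{eq:bubble}) $w_1=\pm U_{\delta,y}$ for some $\delta>0$ and $y\in\rn$.

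Finally I would unwind the rescalings. A direct computation from \eqref{eq:bubble} gives $[U_{\delta,y}]_{\eps,\xi}=U_{\eps\delta,\,\xi+\eps y}$, so setting $\eps_k:=\delta\eps_{1,k}$ and $\xi_k:=\xi_{1,k}+\eps_{1,k}y$ turns the convergence in Theorem \ref{thm:struwe}$(ii)$ into $\|v_k-(\pm U_{\eps_k,\xi_k})\|\to 0$, the sign being that of $w_1$. Since $\o$ is bounded, $\eps_{1,k}^{-1}\dist(\xi_{1,k},\partial\o)\to\infty$ forces $\eps_{1,k}\to 0$, whence $|\xi_k-\xi_{1,k}|=\eps_{1,k}|y|\to 0$, so $\xi_k\in\o$ for all large $k$ (for the finitely many remaining indices one takes $\xi_k$ to be any point of $\o$), and
$$\eps_k^{-1}\dist(\xi_k,\partial\o)\ \ge\ \delta^{-1}\bigl(\eps_{1,k}^{-1}\dist(\xi_{1,k},\partial\o)-|y|\bigr)\ \longrightarrow\ \infty.$$
Combining this with $\|u_k-v_k\|\to 0$ finishes the proof. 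The only genuinely delicate step is the energy accounting: it is precisely the non-attainment of $c_0$ that excludes both a nontrivial weak limit $u$ and a splitting into two or more bubbles; everything else is bookkeeping and an explicit change of variables.
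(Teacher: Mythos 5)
Your proof is correct and follows essentially the same route as the paper's (which is stated in three lines): pass to a nearby Palais--Smale sequence via Ekeland's variational principle, apply the decomposition of Theorem \ref{thm:struwe}, and use the non-attainment of $c_0$ from Proposition \ref{prop:minimal energy}$(c)$ together with $J_\infty(w_i)\ge\frac1N S^{N/2}$ to force $u\equiv 0$ and $m=1$ with a least-energy bubble. Your energy accounting and the rescaling identity $[U_{\delta,y}]_{\eps,\xi}=U_{\eps\delta,\,\xi+\eps y}$ correctly supply the details the paper leaves implicit.
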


\begin{proof}
By Ekeland's variational principle \cite[Theorem 8.5]{w}, we may assume that $J'_0(u_k)\to 0$ in $(D^{1,2}(\R^N))'$. As shown in Proposition \ref{prop:minimal energy}, $c_0$ is not attained. The statement follows from Theorem \ref{thm:struwe}.
\end{proof}

\section{Multiple solutions for $\lambda=0$}
\label{sec:coron}

In this section we prove Theorem \ref{thm:coron}. 

We recall the notion of Lusternik-Schnirelmann category of a map of pairs. A pair of spaces $(X,A)$ consists of a topological space $X$ and a subspace $A$ of $X$. A map of pairs $f:(X,A)\to(Y,B)$ is a continuous function $f:X\to Y$ such that $f(A)\subset B$. Two maps of pairs $f_0,f_1:(X,A)\to(Y,B)$ are homotopic if there exists a map of pairs $F:(X\times[0,1],A\times[0,1])\to(Y,B)$ such that $F(x,0)=f_0(x)$ and  $F(x,1)=f_1(x)$ for every $x\in X$.

\begin{definition}
The \emph{category of a map of pairs $f:(X,A)\to(Y,B)$}, denoted $\cat(f)$, is the smallest number $m$ such that there exist $m+1$ open subsets $U_0,U_1,\ldots,U_m$ of $X$ with the following properties:
\begin{itemize}
\item $A\subset U_0$ and there is a homotopy of pairs $F:(U_0\times[0,1],A\times[0,1])\to(Y,B)$ such that $F(x,0)=f(x)$ and $F(x,1)\in B$ for every $x\in U_0$,
\item $f|_{U_i}:U_i\to Y$ is homotopic to a constant function for each $i=1,\ldots,m$,
\item $X=U_0\cup U_1\cup\cdots\cup U_m$.
\end{itemize}
If there are no subsets of $X$ with these properties, we set $\cat(f):=\infty$. The \emph{category of $(X,A)$}, denoted $\cat(X,A)$ is the category of the identity map $\mathrm{id}_{(X,A)}$.
\end{definition}

The following property and others can be found in \cite[Section 2]{cp}.

\begin{lemma}\label{lem:category}
If $f:(X,A)\to(Y,B)$ and $g:(Y,B)\to(Z,C)$ are maps of pairs, then
$$\cat(g\circ f)\leq\cat(Y,B).$$
\end{lemma}

\begin{proof}
The proof is straightforward.
\end{proof}

Given $d\in\r$, set
$$\cN_0^{\leq d}:=\{u\in\cN_0:J_0(u)\leq d\}.$$
Lusternik-Schnirelmann theory yields the following result.

\begin{lemma} \label{lem:ls theory}
If $\frac{1}{N}S^\frac{N}{2}<a<b<\frac{2}{N}S^\frac{N}{2}$, then the functional $J_0$ has at least 
$$\cat\big(\cN_0^{\leq b},\cN_0^{\leq a}\big)$$
pairs $\pm u$ of critical points with $J_0(u)\in[a,b]$.
\end{lemma}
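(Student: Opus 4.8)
The plan is to derive this from the standard Lusternik–Schnirelmann theory applied to the even functional $J_0$ restricted to the Nehari manifold $\cN_0$, treating the sublevel pair $(\cN_0^{\leq b},\cN_0^{\leq a})$. First I would record that $\cN_0$ is a complete $\cC^2$ Hilbert submanifold of $D^{1,2}(\rn)$ and a natural constraint, so the restricted gradient flow of $J_0|_{\cN_0}$ is well defined, and that $J_0$ is even and bounded below on $\cN_0$ by $c_0=\frac1N S^{N/2}>0$. The essential analytic input is the Palais–Smale condition on the relevant energy window: by Corollary \ref{cor:ps}, every Palais–Smale sequence for $J_0$ at a level $c\in(\frac1N S^{N/2},\frac2N S^{N/2})$ has a convergent subsequence, and a standard argument (using that $\cN_0$ is a natural constraint, e.g. \cite[Theorem 8.5]{w} type reasoning or the fact that constrained PS sequences are free PS sequences up to lower order terms) transfers this to $J_0|_{\cN_0}$. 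Hence $J_0|_{\cN_0}$ satisfies (PS)$_c$ for every $c\in(a,b)\subset(\frac1N S^{N/2},\frac2N S^{N/2})$, and since $a>\frac1N S^{N/2}=c_0$ is above the infimum while $b<\frac2N S^{N/2}$, there are no critical levels outside a compact subset of this window, so $\cN_0^{\leq a}$ and $\cN_0^{\leq b}$ are genuine sublevel sets with the usual deformation properties.

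Next I would invoke the abstract category result for maps of pairs, or more simply the classical minimax principle: if $J_0|_{\cN_0}$ satisfies (PS) on $[a,b]$ and has no critical values in $[a,b]$ (or only finitely many), then the number of critical points of $J_0|_{\cN_0}$ with value in $[a,b]$ is at least $\cat(\cN_0^{\leq b},\cN_0^{\leq a})$. Concretely: suppose for contradiction that $J_0$ has fewer than $\cat(\cN_0^{\leq b},\cN_0^{\leq a})$ pairs $\pm u$ of critical points with $J_0(u)\in[a,b]$. Using the (PS) condition and the negative gradient flow on $\cN_0$ (which respects the $\z2$-action $u\mapsto -u$), together with small neighborhoods of the finitely many critical orbits $\{\pm u_1\},\dots,\{\pm u_\ell\}$ on which $J_0|_{\cN_0}$ restricted is nullhomotopic, one deforms $\cN_0^{\leq b}$ into $\cN_0^{\leq a}\cup (\text{those neighborhoods})$; this exhibits $\cN_0^{\leq b}$ as covered by $\ell+1$ sets of the kind appearing in the definition of $\cat$ of the inclusion $(\cN_0^{\leq b},\cN_0^{\leq a})\hookrightarrow(\cN_0^{\leq b},\cN_0^{\leq a})$, forcing $\cat(\cN_0^{\leq b},\cN_0^{\leq a})\leq\ell$, a contradiction. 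The fact that solutions of \eqref{eq:problem} with $\lambda=0$ are never zero on $\cN_0$, and that on $\cN_0$ the functional equals $\frac1N\|u\|^2$, ensures $0\notin\cN_0$ so the $\z2$-action is free and "pairs $\pm u$" is the correct count.

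The main obstacle I expect is the careful verification of the deformation lemma on the constrained manifold $\cN_0$ in the presence of the $\z2$-symmetry, and in particular checking that the flow stays in $D^{1,2}(\rn)$ and does not escape to the missing critical levels $c_0$ (below, since $a>c_0$) or $\frac2N S^{N/2}$ (above, since $b<\frac2N S^{N/2}$) — this is precisely where the hypothesis $\frac1N S^{N/2}<a<b<\frac2N S^{N/2}$ is used, via Corollary \ref{cor:ps}. I would handle this by citing the standard equivariant Lusternik–Schnirelmann machinery for functionals on $\cC^2$ Banach manifolds satisfying (PS) on an interval, for instance the framework in \cite{cp} for category of maps of pairs, rather than reproving the deformation lemma, and simply note that the hypotheses of that abstract result are met: $J_0|_{\cN_0}$ is even, of class $\cC^1$, bounded below, and satisfies (PS)$_c$ for all $c\in(a,b)$.
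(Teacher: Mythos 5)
Your proposal is correct and follows essentially the same route as the paper: the paper's proof simply invokes Corollary \ref{cor:ps} for the Palais--Smale condition in the interval $(\frac{1}{N}S^{\frac{N}{2}},\frac{2}{N}S^{\frac{N}{2}})$ and then cites standard Lusternik--Schnirelmann theory (\cite[Theorem 5.19]{w} or \cite[Theorem 1.1]{cp}), which is exactly the machinery you describe in expanded form. Your additional remarks on the natural-constraint transfer, the free $\z2$-action, and the equivariant deformation are the correct ingredients hidden inside those citations.
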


\begin{proof}
By Corollary \ref{cor:ps}, $J_0$ satisfies the Palais-Smale condition in the interval $(\frac{1}{N}S^\frac{N}{2},\frac{2}{N}S^\frac{N}{2})$. The statement follows from standard Lusternik-Schnirelmann theory; see, for instance, \cite[Theorem 5.19]{w} or \cite[Theorem 1.1$(a)$]{cp}.
\end{proof}

For any two numbers $0<t<s$ and $X\subset\rn$, define
\begin{align*}
\bar{B}_s X:=\{x\in\rn:\dist(x,M)\leq s\},\qquad S_s X=\partial (\bar B_s X),\qquad A_{t,s}X:=\{x\in\rn:t\leq \dist(x,X)\leq s\}.
\end{align*}

Fix a closed smooth submanifold $M$ of $\rn$ and $R>0$ such that $\bar B_RM$ is a tubular neighborhood of $M$. Recall that $\bar B_RM$ is a (closed) tubular neighborhood of $M$ if for any two different points $x,y\in M$ the closed normal balls of radius $R$ at $x$ and $y$ have empty intersection; see, for instance, \cite[Section 11]{ms}.
 
The first step in proving Theorem \ref{thm:coron} is to define the number $r\in(0,R)$. This is the purpose of the following statement.

\begin{lemma}\label{lem:r}
Given $b\in(\frac{1}{N}S^\frac{N}{2},\frac{2}{N}S^\frac{N}{2})$, there exists $r\in(0,R)$ such that, for any $\vr>0$ and $a\in(\frac{1}{N}S^\frac{N}{2},b)$, there is a continuous function
$$\bar B_RM\to D^{1,2}(\rn),\qquad\xi\mapsto w_\xi,$$
with the following properties:
\begin{itemize}
\item[$(i)$] $w_\xi\in\cC^\infty_c(\rn)$, $w_\xi\neq 0$, $w_\xi$ is radial with respect to $\xi$ and $\supp(w_\xi)\subset\bar B_{R+\vr}M$ for every $\xi\in \bar B_RM$.
\item[$(ii)$] For each $\xi\in \bar B_RM$, 
\begin{equation}
\irn Q_{A_{r,R+\vr}M}|w_\xi|^{2^*}>0\qquad\text{and}\qquad
\frac{\|w_\xi\|^2}{\Big(\irn Q_{A_{r,R+\vr}M}|w_\xi|^{2^*}\Big)^{2/2^*}}\leq
\begin{cases}
(Nb)^\frac{2}{N} &\text{if \ }\xi\in \bar B_RM,\\
(Na)^\frac{2}{N} &\text{if \ }\xi\in S_RM.
\end{cases}
\end{equation}
\end{itemize}
\end{lemma}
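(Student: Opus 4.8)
The plan is to take $w_\xi$ to be a truncated, dilated Aubin--Talenti bubble centered at $\xi$. Fix a cutoff $\phi\in\cC^\infty_c([0,\infty))$ with $0\le\phi\le1$, $\phi\equiv1$ on $[0,\tfrac12]$ and $\supp\phi\subset[0,1]$, and for $\xi\in\bar B_RM$ put
\[
\rho(\xi):=R+\vr-\dist(\xi,M)\in[\vr,R+\vr],\qquad w_\xi(x):=\phi\!\left(\frac{|x-\xi|}{\rho(\xi)}\right)U_{\eps(\xi),\xi}(x),
\]
where $U_{\eps,\xi}$ is the bubble \eqref{eq:bubble} and $\eps(\xi)>0$ is a scale, depending continuously on $\dist(\xi,M)$, to be fixed. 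Since $\dist(x,M)\le\dist(\xi,M)+|x-\xi|$, the choice of $\rho(\xi)$ forces $\supp w_\xi\subset\{x:|x-\xi|\le\rho(\xi)\}\subset\bar B_{R+\vr}M$; also $w_\xi\in\cC^\infty_c(\rn)$ is nonzero and radial about $\xi$, and $\xi\mapsto w_\xi$ is continuous into $D^{1,2}(\rn)$ because $\rho$ and $\eps$ are. This gives $(i)$. Since $\supp w_\xi$ misses $\{\dist(\cdot,M)>R+\vr\}$,
\[
\irn Q_{A_{r,R+\vr}M}|w_\xi|^{2^*}=\irn|w_\xi|^{2^*}-2\,m(\xi),\qquad m(\xi):=\int_{\{\dist(\cdot,M)<r\}}|w_\xi|^{2^*},
\]
so $(ii)$ will follow from uniform control of $\irn|w_\xi|^{2^*}$, $\|w_\xi\|^2$ and the ``bad mass'' $m(\xi)$.

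I set $\eps(\xi):=Kr$ on the bulk $\{\dist(\xi,M)\le\tfrac34R\}$, with $K>1$ a large constant depending only on $N$ and on the geometry (curvature bounds, reach) of $\bar B_RM$; and, given $\vr$ and $a$, I let $\eps(\xi)$ interpolate monotonically between $Kr$ and $\eps_a:=\vr/T_a$ as $\dist(\xi,M)$ runs over $[\tfrac34R,\tfrac78R]$, staying $\eps_a$ on $\{\dist(\xi,M)\ge\tfrac78R\}$, with $T_a>1$ large depending on $a$. Two elementary bounds on $m(\xi)$: since $\|U_{\eps,\xi}\|_\infty^{2^*}=(N(N-2))^{N/2}\eps^{-N}$ and $\int_{\{|x-\xi|>\ell\}}|U_{\eps,\xi}|^{2^*}\le C\eps^N\ell^{-N}$, one gets, with $E:=\{\dist(\cdot,M)<r\}$,
\[
m(\xi)\le\min\!\big\{C\eps(\xi)^{-N}|E|,\ C\eps(\xi)^N\dist(\xi,E)^{-N}\big\}\le C\,|E|^{1/2}\dist(\xi,E)^{-N/2}
\]
(the geometric mean of the two bounds, which is independent of $\eps$). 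When $\dist(\xi,M)\ge\tfrac12R$ the hole $E$ lies at distance $\ge\tfrac14R$ from $\xi$ and $|E|\le Cr^{N-k}$, $k:=\dim M\le N-1$, so $m(\xi)\le C'r^{(N-k)/2}$; and when moreover $\eps(\xi)=\eps_a$ is small (near $S_RM$), the sharper bound gives $m(\xi)\le C\eps_a^N\dist(\xi,E)^{-N}\le C''(\vr/(T_aR))^N$. When $\dist(\xi,M)<\tfrac12R$ one needs a finer \emph{thin-tube estimate}: rescaling $x=\xi+Kr\,y$ and using that $M$ is $C^1$-close to an affine $k$-plane at scale $KrT_0\ll\mathrm{reach}$, the part of $E$ inside $B_{KrT_0}(\xi)$ meets only an $O(1/K)$-neighborhood of that plane, hence carries mass $\le CK^{-(N-k)}$, while the part outside contributes $\le\eta(T_0):=\int_{|y|>T_0}|U_{1,0}|^{2^*}$; so $m(\xi)\le CK^{-(N-k)}+\eta(T_0)$.

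Everywhere on $\bar B_RM$ the cutoff acts at relative scale $\rho(\xi)/\eps(\xi)$, which on the bulk is $\ge R/(4Kr)\to\infty$ as $r\to0$, and on $\{\dist(\xi,M)\ge\tfrac34R\}$ is $\ge\min\{R/(8Kr),\,T_a\}$ (there $\rho(\xi)\ge\vr=\eps_aT_a$, and on the transition interval also $\rho(\xi)\ge\tfrac18R$); hence $w_\xi$ is a near-perfect bubble and $\|w_\xi\|^2,\ \irn|w_\xi|^{2^*}\to S^{N/2}$ uniformly. Combining with the bad-mass bounds: on $\{\dist(\xi,M)\le\tfrac78R\}$ one has $m(\xi)\le CK^{-(N-k)}+\eta(T_0)+C'r^{(N-k)/2}$, which is $\le\eps_0 S^{N/2}$ for any prescribed $\eps_0>0$ once $K,T_0$ are large and $r$ small, whence $\irn Q_{A_{r,R+\vr}M}|w_\xi|^{2^*}\ge(1-O(\eps_0))S^{N/2}>0$ and the quotient in $(ii)$ is $\le(1+o(1))S\le(Nb)^{2/N}$, because $b>\tfrac1NS^{N/2}$; this fixes $r\in(0,R)$ in terms of $b$. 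On $\{\dist(\xi,M)\ge\tfrac78R\}$ we instead use $m(\xi)\le C''(\vr/(T_aR))^N$ and relative scale $\ge T_a$, so both the bad mass and the bubble defect tend to $0$ as $T_a\to\infty$; choosing $T_a$ large (depending on $a$) makes the quotient $\le(Na)^{2/N}$ where $\dist(\xi,M)=R$, and $\le(Nb)^{2/N}$ throughout $\{\dist(\xi,M)\ge\tfrac34R\}$. The order of the choices is thus: $K,T_0$ from $b$; then $r$ from $b,K,T_0$; then, given $\vr$ and $a$, the number $T_a$ from $a$.

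I expect the main obstacle to be making the thin-tube estimate uniform over $\xi$ in the compact set $\bar B_RM$: one must describe $\{\dist(\cdot,M)<r\}$ inside balls of radius $O(KrT_0)$ about an arbitrary $\xi$ near $M$ using the Fermi coordinates attached to the tubular neighborhood $\bar B_RM$ and curvature bounds, and separately control the contribution of the ``distant arms'' of the tube --- points close to $\xi$ in $\rn$ but far from $\xi$ along $M$ --- which is where compactness of $M$ enters. The remaining ingredients --- the near-perfect-bubble asymptotics for $\|w_\xi\|^2$, $\irn|w_\xi|^{2^*}$ and the Sobolev quotient, and the $D^{1,2}$-continuity of $\xi\mapsto w_\xi$ --- are standard.
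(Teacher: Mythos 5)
Your construction is the same in spirit as the paper's (truncated bubbles centered at $\xi$, with a scale that interpolates between a ``bulk'' value giving quotient $\le(Nb)^{2/N}$ and a smaller value near $S_RM$ giving quotient $\le(Na)^{2/N}$, and a dilation near $S_RM$ to keep the support inside $\bar B_{R+\vr}M$). However, there is a genuine gap at the central step, and it is one you create yourself by setting $\eps(\xi)=Kr$ on the bulk. With the bubble scale tied to $r$, the ``bad mass'' $\int_{\bar B_rM}|w_\xi|^{2^*}$ compares a concentrating profile against a shrinking tube of the \emph{same} width, and its smallness becomes the delicate, uniform-in-$\xi$ ``thin-tube estimate'' that you explicitly flag as the main obstacle and do not prove. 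As written, the claimed bound $m(\xi)\le CK^{-(N-k)}+\eta(T_0)$ for $\dist(\xi,M)<\tfrac12R$ rests on Fermi coordinates, curvature/reach bounds, and control of the ``distant arms'' of the tube, none of which is carried out; since this is exactly the inequality that determines the admissible $r$, the proof is incomplete at its decisive point.

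The fix is to decouple $\eps$ from $r$, which is what the paper does. First fix $\eps_0$ (depending only on $b$) so that the Brezis--Nirenberg estimate gives $\|u_{\eps_0}\|^2/|u_{\eps_0}|_{2^*}^2<(Nb)^{2/N}$ for the truncated bubble $u_{\eps_0}=\chi U_{\eps_0,0}$ supported in $B_{R/4}(0)$. The translated profile $v_\xi=u_{\eps_0}(\cdot-\xi)$ then has a \emph{fixed} $L^\infty$ bound, so
$$\int_{\bar B_rM}|v_\xi|^{2^*}\le\|u_{\eps_0}\|_\infty^{2^*}\,|\bar B_rM|\longrightarrow 0\quad\text{as }r\to0,$$
uniformly in $\xi$, with no geometry of $M$ involved beyond $|\bar B_rM|\to0$; this fixes $r$ in terms of $b$ alone, as the quantifier structure of the lemma requires. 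For $\xi$ with $\dist(\xi,M)\ge\tfrac R2$ one arranges (as the paper does, by keeping the support radius $\le R/4$ and $r<R/4$) that $\supp w_\xi\subset A_{r,R+\vr}M$, so the bad mass vanishes identically there and only the standard Sobolev-quotient estimate with a smaller scale $\eps_1$ (chosen from $a$) is needed near $S_RM$. Your remaining ingredients --- the near-perfect-bubble asymptotics in terms of the relative scale $\rho/\eps$, the dilation near $S_RM$, the continuity of $\xi\mapsto w_\xi$, and the order of the choices ($b\Rightarrow r$, then $\vr,a\Rightarrow$ boundary scale) --- are all sound and match the paper's.
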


\begin{proof}
Fix a radial cut-off function $\chi\in\cC^\infty_c(\rn)$ such that $\chi(x)=1$ if $|x|\leq\frac{R}{8}$ and $\chi(x)=0$ if $|x|\geq\frac{R}{4}$. For each $\eps>0$ set $u_\eps(x):=\chi(x)U_{\eps,0}(x)$, where $U_{\eps,0}$ is the bubble defined in \eqref{eq:bubble}. Using the Brezis-Nirenberg estimates \cite[Lemma 1.1]{bn} it is easy to see that there exists  $C_0>0$ such that, for every $\eps>0$ sufficiently small,
\begin{equation}\label{eq:C_0}
\frac{\|u_\eps\|^2}{|u_\eps|_{2^*}^2}\leq S + C_0\eps^{N-2}.
\end{equation}
Fix $\eps_0>0$ such that $S + C_0\eps_0^{N-2}<(Nb)^\frac{2}{N}$ and, for each $\xi\in \rn$, define
$$v_\xi(x):=u_{\eps_0}(x-\xi).$$
Then,
\begin{equation*}
\supp(v_\xi)\subset\bar B_{R}M\quad\text{for all \ }\xi\in\bar B_\frac{3R}{4}M\qquad\text{and}\qquad\frac{\|v_\xi\|^2}{|v_\xi|_{2^*}^2}<(Nb)^\frac{2}{N}\quad\text{for all \ }\xi\in \rn.
\end{equation*}
Since for any $r\in(0,R)$ and $\xi\in\bar B_\frac{3R}{4}M$,
$$\irn Q_{A_{r,R}}|v_\xi|^{2^*}=|v_\xi|^{2^*}_{2^*} - 2\int_{\bar B_rM}|v_\xi|^{2^*}\qquad\text{and}\qquad\int_{\bar B_rM}|v_\xi|^{2^*}\to 0\text{ \ as \ }r\to 0,$$
we may choose $r\in(0,\frac{R}{4})$ such that
\begin{equation}\label{eq:coron1}
\irn Q_{A_{r,R}}|v_\xi|^{2^*}>0\qquad\text{and}\qquad\frac{\|v_\xi\|^2}{\Big(\irn Q_{A_{r,R}}|v_\xi|^{2^*}\Big)^{2/2^*}}\leq(Nb)^\frac{2}{N}\qquad\text{for all \ }\xi\in B_\frac{3R}{4}M.
\end{equation}
Let us check that this $r$ satisfies the statement of the lemma.

Let $\vr\in(0,\frac{R}{4}]$ and $a\in(\frac{1}{N}S^\frac{N}{2},b)$ be given. Fix $\eps_1\in(0,\eps_0]$ such that $S + C_0\eps_1^{N-2}\leq (Na)^\frac{2}{N}$ and set
\begin{equation*}
t(\xi):=
\begin{cases}
0 &\text{if \ }\xi\in\bar B_\frac{R}{2}M,\\
\frac{4}{R}(\dist(\xi,M)-\frac{R}{2}) &\text{if \ }\xi\in A_{\frac{R}{2},\frac{3R}{4}}M,\\
1  &\text{if \ }\xi\in A_{\frac{3R}{4},R}M,
\end{cases}
\end{equation*}
$$\eps(\xi):=(1-t(\xi))\eps_0+t(\xi)\eps_1,\qquad u_{\eps(\xi)}:=\chi U_{\eps(\xi),0},\qquad\vr(\xi):=\tfrac{4}{R}\big(\tfrac{4}{R}\vr-1\big)\big(\dist(\xi,M)-\tfrac{3R}{4}\big)+1.$$
Define
\begin{equation*}
w_\xi(x):=
\begin{cases}
u_{\eps(\xi)}(x-\xi) & \text{if \ }\xi\in\bar B_\frac{3R}{4}M, \smallskip\\
\vr(\xi)^\frac{2-N}{2}u_{\eps_1}\big(\frac{x-\xi}{\vr(\xi)})& \text{if \ }\xi\in A_{\frac{3R}{4},R}M.
\end{cases}
\end{equation*}
Then, $\supp(w_\xi)\subset\bar B_\frac{R}{4}(\xi)$ if $\xi\in \bar B_\frac{3R}{4}M$ and $\supp(w_\xi)\subset\bar B_{\frac{R}{4}\vr(\xi)}(\xi)$ if $\xi\in A_{\frac{3R}{4},R}M$. Therefore, $\supp(w_\xi)\subset\bar B_{R+\vr}M$ and $w_\xi$ satisfies $(i)$.

If $\xi\in A_{\frac{R}{2},R}M$ then, as $r\in(0,\frac{R}{4})$, we have that $\supp(w_\xi)\subset A_{r,R+\vr}M$. As a consequence,
$$\frac{\|w_\xi\|^2}{\big(\irn Q_{A_{r,R+\vr}M}|w_\xi|^{2^*}\big)^{2/2^*}}=\frac{\|w_\xi\|^2}{|w_\xi|_{2^*}^2}=\frac{\|u_{\eps(\xi)}\|^2}{|u_{\eps(\xi)}|_{2^*}^2}\leq S + C_0\eps(\xi)^{N-2}\leq S + C_0\eps_0^{N-2}\leq (Nb)^\frac{2}{N}\quad\text{if \ }\xi\in A_{\frac{R}{2},\frac{3R}{4}}M$$
and, using the invariance of both norms under dilations,
$$\frac{\|w_\xi\|^2}{\big(\irn Q_{A_{r,R+\vr}M}|w_\xi|^{2^*}\big)^{2/2^*}}=\frac{\|w_\xi\|^2}{|w_\xi|_{2^*}^2}=\frac{\|u_{\eps_1}\|^2}{|u_{\eps_1}|_{2^*}^2}\leq S + C_0\eps_1^{N-2}\leq (Na)^\frac{2}{N}\quad\text{if \ }\xi\in A_{\frac{3R}{4},R}M.$$
Since $w_\xi=v_\xi$ if $\xi\in\bar B_\frac{R}{2}M$, these two inequalities, together with \eqref{eq:coron1}, yield $(ii)$. This completes the proof.
\end{proof}

Let
$$\beta_\o:L^{2^*}(\o)\smallsetminus\{0\}\to\rn,\qquad\beta_\o(v):=\frac{\io x\,|v(x)|^{2^*}\d x}{\io|v(x)|^{2^*}\d x}$$
be the barycenter map in $\o$. Since $\o$ is bounded, this function is well defined and continuous. Define
$$\beta:\cN_0\to\rn,\qquad\beta(u):=\beta_\o(\mathds 1_\o u).$$
Noting that
\begin{equation*}
S^\frac{N}{2}\leq\|u\|^2\leq\io|u|^{2^*}\qquad\text{for every \ }u\in\cN_0,
\end{equation*}
we see that $\beta$ is well defined and that it is continuous.

\begin{lemma}\label{lem:a}
Given $\vr>0$, there exists $a\in(\frac{1}{N}S^\frac{N}{2},\frac{2}{N}S^\frac{N}{2})$ such that
$$\dist(\beta(u),\o)<\vr\quad\text{for every \ }u\in\cN_0^{\leq a}.$$
\end{lemma}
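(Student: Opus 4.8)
\textbf{Proof proposal for Lemma \ref{lem:a}.}

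The plan is to argue by contradiction using the compactness analysis for minimizing sequences at the level $\tfrac{1}{N}S^{N/2}$ provided by Corollary \ref{cor:minimizing}. Suppose the claim fails for some $\vr>0$. Then for every $n\in\n$ with $\tfrac{1}{N}S^\frac{N}{2}+\tfrac1n<\tfrac{2}{N}S^\frac{N}{2}$ there is $u_n\in\cN_0^{\leq \frac{1}{N}S^{N/2}+\frac1n}$ with $\dist(\beta(u_n),\o)\geq\vr$. Since $c_0=\tfrac{1}{N}S^\frac{N}{2}=\inf_{\cN_0}J_0$, the sequence $(u_n)$ is a minimizing sequence for $J_0$ on $\cN_0$, i.e. $J_0(u_n)\to c_0$. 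By Corollary \ref{cor:minimizing}, after passing to a subsequence there exist $\eps_n>0$ and $\xi_n\in\o$ with $\eps_n^{-1}\dist(\xi_n,\partial\o)\to\infty$ such that, up to replacing $u_n$ by $-u_n$ (which does not change $\beta$, since it depends on $|u_n|^{2^*}$), $\|u_n - U_{\eps_n,\xi_n}\|\to 0$.

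Next I would show that $\beta(u_n)-\xi_n\to 0$. Write $\beta(u_n)=\beta_\o(\mathds 1_\o u_n)$. Because $\xi_n\in\o$ and $\eps_n^{-1}\dist(\xi_n,\partial\o)\to\infty$, the bubble $U_{\eps_n,\xi_n}$ concentrates at $\xi_n$ well inside $\o$: for any fixed $\rho>0$, $\int_{B_\rho(\xi_n)}|U_{\eps_n,\xi_n}|^{2^*}\to S^{N/2}=|U_{\eps_n,\xi_n}|_{2^*}^{2^*}$ and $B_\rho(\xi_n)\subset\o$ for large $n$, so the mass of $\mathds 1_\o|U_{\eps_n,\xi_n}|^{2^*}$ outside any fixed ball around $\xi_n$ tends to $0$. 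Combined with the strong $L^{2^*}$ convergence $u_n-U_{\eps_n,\xi_n}\to 0$ and the Brezis–Lieb lemma, this gives $\mathds 1_\o|u_n|^{2^*}=\mathds 1_\o|U_{\eps_n,\xi_n}|^{2^*}+o(1)$ in $L^1(\rn)$, with total mass bounded away from $0$. A change of variables $x=\xi_n+\eps_n y$ in $\beta_\o(\mathds 1_\o U_{\eps_n,\xi_n})$ shows this barycenter equals $\xi_n+\eps_n\cdot(\text{bounded vectors})\to$ nothing new; more precisely, since $U$ is radial and decays fast enough that $y\mapsto |y|\,|U_{1,0}(y)|^{2^*}\in L^1$ when $N\geq 4$ — and the truncation to the rescaled domain $\{y:\xi_n+\eps_n y\in\o\}$ only helps — we get $\beta_\o(\mathds 1_\o U_{\eps_n,\xi_n})=\xi_n+o(1)$. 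Therefore $\beta(u_n)=\xi_n+o(1)$, so $\dist(\beta(u_n),\o)\leq|\beta(u_n)-\xi_n|\to 0$, contradicting $\dist(\beta(u_n),\o)\geq\vr>0$. This proves the lemma, with $a:=\tfrac{1}{N}S^\frac{N}{2}+\tfrac1n$ for $n$ large enough that the contradiction is reached — formally, the contradiction shows that for some $a>\tfrac{1}{N}S^\frac{N}{2}$ (close to it) no such $u$ exists.

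The main obstacle is the passage from $\|u_n-U_{\eps_n,\xi_n}\|\to 0$ in $D^{1,2}$ to the control of the barycenter, i.e. the uniform integrability needed so that the weighted first moment $\io x\,|v(x)|^{2^*}\d x$ behaves continuously and so that the tail of the bubble's mass contributes negligibly to the barycenter. One must be careful that $\eps_n$ may not go to $0$ a priori from the statement of Corollary \ref{cor:minimizing} — but $\xi_n\in\o$ and $\o$ is bounded, so $(\beta(u_n))$ stays bounded regardless, and the key point is only that the normalized measure $|u_n|^{2^*}\d x/\,\||u_n|^{2^*}\|_{L^1}$ on $\o$ becomes concentrated near $\xi_n$; this follows because $\dist(\xi_n,\partial\o)/\eps_n\to\infty$ forces $\eps_n\to 0$ (as $\o$ is bounded) unless $\xi_n$ stays in a fixed compact subset with $\eps_n$ bounded below, in which case $U_{\eps_n,\xi_n}$ does not concentrate and one instead uses directly that $\beta_\o$ is continuous on the strongly convergent sequence. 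A clean way to handle both cases uniformly is: along a subsequence either $\eps_n\to 0$, and then the concentration argument above applies, or $\eps_n\to\eps_*>0$ and $\xi_n\to\xi_*\in\overline\o$, so $U_{\eps_n,\xi_n}\to U_{\eps_*,\xi_*}$ strongly, hence $u_n\to U_{\eps_*,\xi_*}$ strongly in $D^{1,2}(\rn)$, forcing $U_{\eps_*,\xi_*}\in\cN_0$ and contradicting the fact (Proposition \ref{prop:minimal energy}$(c)$) that $c_0$ is not attained. In either case we reach a contradiction, completing the proof.
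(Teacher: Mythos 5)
Your proposal is correct and follows essentially the same route as the paper's proof: argue by contradiction, invoke Corollary \ref{cor:minimizing}, and show $\beta(u_n)-\xi_n\to 0$, which is incompatible with $\dist(\beta(u_n),\o)\geq\vr$ since $\xi_n\in\o$. The only differences are technical: the paper truncates the bubble to $B_{d_n}(\xi_n)\subset\o$ so that its barycenter is \emph{exactly} $\xi_n$ by radial symmetry, whereas you bound the first moment $\eps_n\int_{\rn}|y|\,U_{1,0}(y)^{2^*}\d y$ after rescaling (this integrability holds for all $N\geq 3$, not only $N\geq 4$, and your dichotomy on $\eps_n$ is unnecessary since, as you yourself note, boundedness of $\o$ forces $\eps_n\to 0$); also, your aside that $B_\rho(\xi_n)\subset\o$ for fixed $\rho$ and large $n$ need not hold (as $\dist(\xi_n,\partial\o)$ may tend to $0$), but your precise change-of-variables argument does not use it.
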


\begin{proof}
Arguing by contradiction, assume there exist $\vr_0>0$ and $u_k\in\cN_0$ such that $J_0(u_k)\leq \frac{1}{N}S^\frac{N}{2}+\frac{1}{k}$ and
\begin{equation}\label{eq:contradiction}
\dist(\beta(u_k),\o)\geq\vr_0\quad\text{for every \ }k\in\mathbb{N}.
\end{equation}
Then, by Corollary \ref{cor:minimizing}, replacing $u_k$ with $-u_k$ if necessary and passing to a subsequence, there exist $\eps_k>0$ and $\xi_k\in\o$ such that
$$\lim_{k\to\infty}\eps_k^{-1}\dist(\xi_k,\partial\o) = \infty\qquad\text{and}\qquad\lim_{k\to\infty}\|u_k - U_{\eps_k,\xi_k}\| = 0.$$
Let $d_k:=\dist(\xi_k,\partial\o)$ and set $v_k:=\mathds 1_{B_{d_k}(\xi_k)}U_{\eps_k,\xi_k}$. Since $v_k$ is radial with respect to $\xi_k$ and $\supp(v_k)\subset\overline{\o}$, its barycenter is $\beta_\o(v_k)=\xi_k$. A change of variable yields
\begin{align} 
\io |v_k|^{2^*}&=\int_{B_{d_k}(\xi_k)}U_{\eps_k,\xi_k}^{2^*}=\int_{B_{\eps_k^{-1}d_k}(0)}U_{1,0}^{2^*}= S^\frac{N}{2}+o(1),\label{eq:lbd1}\\
\int_{\rn\smallsetminus\o}|u_k|^{2^*}&\leq C\Big(|u_k-U_{\eps_k,\xi_k}|_{2^*}^{2^*} + \int_{\rn\smallsetminus\o}U_{\eps_k,\xi_k}^{2^*}\Big) \leq C\|u_k-U_{\eps_k,\xi_k}\|^{2^*} + C\int_{\rn\smallsetminus B_{\eps_k^{-1}d_k(0)}}U_{1,0}^{2^*}=o(1),\nonumber\\
|u_k-v_k|_{2^*}^{2^*}&\leq C\left(|u_k-U_{\eps_k,\xi_k}|_{2^*}^{2^*}+|U_{\eps_k,\xi_k}-v_k|_{2^*}^{2^*}\right)\leq C\|u_k-U_{\eps_k,\xi_k}\|^{2^*} + C\int_{\rn\smallsetminus B_{\eps_k^{-1}d_k(0)}}U_{1,0}^{2^*}=o(1).\nonumber
\end{align}
As a consequence,
\begin{align}
\io|u_k|^{2^*}&=\irn Q_\o|u_k|^{2^*}+\int_{\rn\smallsetminus\o}|u_k|^{2^*}=S^\frac{N}{2} + o(1),\label{eq:lbd2} \\
\irn\big||u_k|^{2^*}-|v_k|^{2^*}\big|&\leq 2^*\irn\big(|u_k|+|v_k|\big)^{2^*-1}|u_k-v_k|\leq 2^*\big(|u_k|_{2^*}+|v_k|_{2^*}\big)^{2^*-1}|u_k-v_k|_{2^*}=o(1). \label{eq:lbd3}
\end{align}
Since $\beta_\o(v_k)=\xi_k\in\o$ and $\o$ is bounded, using \eqref{eq:lbd1}, \eqref{eq:lbd2} and \eqref{eq:lbd3} we obtain
\begin{align*}
|\beta(u_k)-\xi_k|&=\left|\frac{\io x\,(|u_k(x)|^{2^*}-|v_k(x)|^{2^*})\d x}{\io|u_k(x)|^{2^*}\d x} + \left(\frac{\io|v_k(x)|^{2^*}\d x}{\io|u_k(x)|^{2^*}\d x}\right)\beta_\o(v_k) - \beta_\o(v_k)\right|\\
&\leq C\left(\irn\big||u_k|^{2^*}-|v_k|^{2^*}\big| + \left(\frac{\io|v_k(x)|^{2^*}\d x}{\io|u_k(x)|^{2^*}\d x}-1\right)\right)=o(1).
\end{align*}
This contradicts \eqref{eq:contradiction}.
\end{proof}

The following theorem gives an estimate for the number of positive solutions to problem \eqref{eq:problem} with $\lambda=0$ in terms of the Lusternik-Schnirelmann category.

\begin{theorem}\label{thm:coron_ls}
Given $b\in(\frac{1}{N}S^\frac{N}{2},\frac{2}{N}S^\frac{N}{2})$, a closed submanifold $M$ of $\rn$ and a tubular neighborhood $\bar B_RM$ of $M$, there exists $r\in(0,R)$ such that, if $\o$ satisfies
\begin{equation}\label{eq:assumptions_ls}
M\cap\overline{\o}=\emptyset\qquad\text{and}\qquad A_{r,R}M\subset\o,
\end{equation}
then the problem \eqref{eq:problem} with $\lambda=0$ has at least 
$$\cat\big[(\bar B_RM,S_RM)\hookrightarrow (\rn,\rn\smallsetminus M)\big]$$
positive solutions $u$ with $J_0(u)\in(\frac{1}{N}S^\frac{N}{2},b\,]$.
\end{theorem}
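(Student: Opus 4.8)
The plan is to produce the two maps of pairs advertised in the introduction, namely
$$(\bar B_RM,S_RM)\xrightarrow{\ \iota_1\ }(\cN_0^{\leq b},\cN_0^{\leq a})\xrightarrow{\ \iota_2\ }(\rn,\rn\smallsetminus M),$$
whose composition is homotopic to the inclusion $(\bar B_RM,S_RM)\hookrightarrow(\rn,\rn\smallsetminus M)$, and then to invoke Lemma \ref{lem:category} together with Lemma \ref{lem:ls theory}. First I would fix $b\in(\frac1N S^{N/2},\frac2N S^{N/2})$, apply Lemma \ref{lem:r} to obtain the number $r\in(0,R)$, and then choose $\vr>0$ small enough that $\bar B_{R+\vr}M$ is still a tubular neighborhood and still disjoint from $\overline{\o}$ (possible since $M\cap\overline{\o}=\emptyset$ and $A_{r,R}M\subset\o$), shrinking $\vr$ further if needed. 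With this $\vr$ fixed, Lemma \ref{lem:a} supplies a level $a\in(\frac1N S^{N/2},\frac2N S^{N/2})$, which we may also take $<b$, such that $\dist(\beta(u),\o)<\vr$ for all $u\in\cN_0^{\leq a}$; and then Lemma \ref{lem:r} with this pair $(\vr,a)$ gives the continuous family $\xi\mapsto w_\xi$ on $\bar B_RM$.

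The map $\iota_1$ is defined by $\iota_1(\xi):=\tau_{0,w_\xi}\,w_\xi$, the radial projection of $w_\xi$ onto the Nehari manifold $\cN_0$, which is well defined because $w_\xi\in\cU$ by Lemma \ref{lem:r}$(ii)$ (here one uses $\supp(w_\xi)\subset\bar B_{R+\vr}M$ together with $A_{r,R}M\subset\o$ to compare $\irn Q_\o|w_\xi|^{2^*}$ with $\irn Q_{A_{r,R+\vr}M}|w_\xi|^{2^*}$: the supports of $w_\xi$ meet $\o$ only inside $A_{r,R+\vr}M$, at least after $\vr$ is chosen so that $\overline{\o}\cap\bar B_{R+\vr}M\subset A_{r,R+\vr}M$). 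By the explicit formula \eqref{eq:U} and Lemma \ref{lem:r}$(ii)$ one gets $J_0(\iota_1(\xi))\leq b$ for all $\xi\in\bar B_RM$ and $J_0(\iota_1(\xi))\leq a$ for $\xi\in S_RM$, so $\iota_1$ is a map of pairs $(\bar B_RM,S_RM)\to(\cN_0^{\leq b},\cN_0^{\leq a})$; continuity follows from continuity of $\xi\mapsto w_\xi$ and of $\tau_{0,\cdot}$ on $\cU$. The map $\iota_2$ is simply $\beta$: by Lemma \ref{lem:a} it sends $\cN_0^{\leq a}$ into the $\vr$-neighborhood of $\o$, which, since $\bar B_{R+\vr}M$ was chosen disjoint from $\overline\o$, is contained in $\rn\smallsetminus M$, so $\beta$ is a map of pairs $(\cN_0^{\leq b},\cN_0^{\leq a})\to(\rn,\rn\smallsetminus M)$.

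It remains to identify the composition $\beta\circ\iota_1$. Since $w_\xi$ is radial about $\xi$ with support in $\bar B_{R+\vr}M$, its barycenter restricted to $\o$, $\beta(\iota_1(\xi))=\beta_\o(\mathds 1_\o w_\xi)$, will in general differ from $\xi$; however one can build an explicit straight-line homotopy in $\rn$, $(\xi,s)\mapsto (1-s)\beta(\iota_1(\xi)) + s\,\xi$, and check that it stays inside $\rn\smallsetminus M$ when $\xi\in S_RM$, because $\beta(\iota_1(\xi))$ lies in a controlled small neighborhood of the portion of $\bar B_{R+\vr}M$ near $\xi$ (again using that the support of $w_\xi$ shrinks near $S_RM$, by the definition of $\vr(\xi)$ in the proof of Lemma \ref{lem:r}, and using $r<R/4$ so the barycenter is genuinely pulled toward $\xi$); this makes $\beta\circ\iota_1$ homotopic as a map of pairs to the inclusion $(\bar B_RM,S_RM)\hookrightarrow(\rn,\rn\smallsetminus M)$. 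Granting this, Lemma \ref{lem:category} gives
$$\cat\big[(\bar B_RM,S_RM)\hookrightarrow(\rn,\rn\smallsetminus M)\big]=\cat(\beta\circ\iota_1)\leq\cat(\cN_0^{\leq b},\cN_0^{\leq a}),$$
and Lemma \ref{lem:ls theory} then yields that number of pairs $\pm u$ of critical points with $J_0(u)\in[a,b]$; since $J_0$ and $\cN_0$ are invariant under $u\mapsto|u|$ and any least energy of a critical point in this range exceeds $\frac1N S^{N/2}$, for each such pair we obtain a positive solution $u$ with $J_0(u)\in(\frac1N S^{N/2},b]$, passing to $|u|$ and invoking the strong maximum principle exactly as in the proof of Theorem \ref{thm:bn}. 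The main obstacle I anticipate is the homotopy $\beta\circ\iota_1\simeq\text{incl}$: one must quantify how far $\beta_\o(\mathds 1_\o w_\xi)$ can stray from $\xi$ as $\xi$ ranges over $S_RM$ — where part of $\supp(w_\xi)$ may poke outside $\o$ — and verify the linear homotopy never hits $M$, which requires the geometric bookkeeping on $r$, $R$, $\vr$ and the tubular neighborhood structure to be set up carefully from the start.
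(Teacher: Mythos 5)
Your proposal follows the paper's proof essentially step by step: the same $r$ from Lemma \ref{lem:r}, the same $\vr$ and level $a$ from Lemma \ref{lem:a}, the same maps $\iota(\xi)=\tau_\xi w_\xi$ and $\beta$, and the same appeal to Lemmas \ref{lem:ls theory} and \ref{lem:category}. The one point where you genuinely diverge is the identification of $\beta\circ\iota$: the paper asserts $\beta(\tau_\xi w_\xi)=\xi$ exactly, on the grounds that $w_\xi$ is radial about $\xi$, whereas you note that $\beta$ integrates only over $\o$, which need not be symmetric about $\xi$, and you replace the identity by a linear homotopy of pairs. Your version is the more careful one, and the homotopy does close: for $\xi\in S_RM$ the construction in Lemma \ref{lem:r} gives $\supp(w_\xi)\subset\bar B_{\vr}(\xi)$, so $\beta(\iota(\xi))$ lies in the convex set $\bar B_{\vr}(\xi)$, the segment joining it to $\xi$ stays there, and $\dist(\xi,M)=R>\vr$ keeps it off $M$. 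You then need homotopy invariance of the category of a map of pairs (available in \cite[Section 2]{cp}) rather than Lemma \ref{lem:category} alone; that is standard.

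Two slips should be repaired. First, $\bar B_{R+\vr}M$ can never be disjoint from $\overline{\o}$, since it contains $A_{r,R}M\subset\o$. What you actually need, and what the paper's choice $\vr:=\min\{\dist(M,\o),\,\dist(A_{r,R}M,\rn\smallsetminus\o)\}$ delivers, is $\bar B_\vr M\cap\overline{\o}=\emptyset$ together with $A_{r,R+\vr}M\subset\o$; the latter inclusion is what gives $Q_\o\geq Q_{A_{r,R+\vr}M}$ on $\supp(w_\xi)$ and hence the upper bounds on $J_0(\tau_\xi w_\xi)$, whereas your stated condition $\overline{\o}\cap\bar B_{R+\vr}M\subset A_{r,R+\vr}M$ is the opposite inclusion and yields the inequality in the wrong direction. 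Second, for positivity one cannot simply pass to $|u|$: the critical points produced here are not minimizers, so $|u|$ need not be a critical point. The correct argument, which the paper cites from \cite{bc}, is that a sign-changing solution would have energy at least $2c_0=\frac{2}{N}S^{\frac{N}{2}}>b$, so the solutions obtained already have a sign; only then does the strong maximum principle argument from the proof of Theorem \ref{thm:bn} apply.
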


\begin{proof}
Let $r\in(0,R)$ be given by Lemma \ref{lem:r}. Since $\o$ satisfies \eqref{eq:assumptions_ls}, 
$$\vr:=\min\{\dist(M,\o),\,\dist(A_{r,R}M,\rn\smallsetminus\o\}>0.$$
For this $\vr$ we choose $a\in(\frac{1}{N}S^\frac{N}{2},b)$ as in Lemma \ref{lem:a}. Then, $\beta$ is a map of pairs
$$\beta:(\cN_0^{\leq b},\cN_0^{\leq a})\to(\rn,\rn\smallsetminus M).$$
Furthermore, Lemma \ref{lem:r} yields a continuous function 
$$\bar B_RM\to D^{1,2}(\rn)\smallsetminus\{0\},\qquad\xi\mapsto w_\xi,$$
that satisfies the statements $(i)$ and $(ii)$ of that lemma. As observed in \eqref{eq:U}, for every $\xi\in \bar B_RM$ there exists $\tau_\xi\in(0,\infty)$ such that $\tau_\xi w_\xi\in\cN_0$ and, since $\o\supset A_{r,R+\vr}M$,
\begin{equation*}
J_0(\tau_\xi w_\xi)=\frac{1}{N}\left(\frac{\|w_\xi\|^2}{\Big(\irn Q_\o|w_\xi|^{2^*}\Big)^{2/2^*}}\right)^\frac{N}{2}\leq\frac{1}{N}\left(\frac{\|w_\xi\|^2}{\Big(\irn Q_{A_{r,R+\vr}M}|w_\xi|^{2^*}\Big)^{2/2^*}}\right)^\frac{N}{2}\leq
\begin{cases}
b &\text{if \ }\xi\in \bar B_RM,\\
a &\text{if \ }\xi\in S_RM.
\end{cases}
\end{equation*}
This gives a map of pairs
$$\iota:(\bar B_RM,S_RM)\to (\cN_0^{\leq b},\cN_0^{\leq a}),\qquad \iota(\xi):=\tau_\xi w_\xi.$$
Since $w_\xi$ is radial with respect to $\xi$, we have that $\beta(\tau_\xi w_\xi)=\xi$ for every $\xi\in\bar B_RM$. Then, Lemma \ref{lem:ls theory} yields
$$\cat\big[(\bar B_RM,S_RM)\hookrightarrow (\rn,\rn\smallsetminus M)\big]\leq\cat\big(\cN_0^{\leq b},\cN_0^{\leq a}\big)$$
and, from Lemma \ref{lem:category}, we get at least 
$$\cat[(\bar B_RM,S_RM)\hookrightarrow (\rn,\rn\smallsetminus M)]$$
pairs of solutions $\pm u$ to problem \eqref{eq:problem} in $\cN_0^{\leq b}$. As $b<\frac{2}{N}S^\frac{N}{2}=2c_0$, a well known argument shows that these solutions do not change sign; see, for instance, \cite[Proof of Theorem A]{bc}. So, assuming that $u\geq 0$ and arguing as in the proof of Theorem \ref{thm:bn}, we conclude that $u>0$ in $\rn$.
\end{proof}

The cup-length is a lower bound for the category. We state this fact precisely below. For convenience, we use singular cohomology $H^*(\,\cdot\,;\z2)$ with $\z2$-coefficients; see Remark \ref{rem:z2}.

\begin{definition} \label{def:cup length}
The \emph{cup-length of a map of pairs $f:(X,A)\to(Y,B)$}, denoted $\cupl(f)$, is the smallest number $m$ such that
$$f^*(\gamma_1\smile\cdots\smile\gamma_m\smile\alpha)=0\qquad\text{for any \ }\alpha\in H^*(Y,B;\z2)\text{ \ and \ }\gamma_1,\ldots,\gamma_m\in\tilde H^*(Y;\z2),$$
where $\tilde H^*(\,\cdot\,;\z2)$ is reduced cohomology. The \emph{cup-length of $(X,A)$}, denoted $\cupl(X,A)$ is the cup-length of the identity map $\mathrm{id}_{(X,A)}$.
\end{definition}

Note that, if $A=\emptyset$, then, as $H^0(X;\z2)$ has a unit, the cup-length of $X$ is the smallest number $m$ such that $\gamma_1\smile\cdots\smile\gamma_m=0$ for any $m$ cohomology classes in $\tilde H^*(X;\z2)$.

\begin{lemma}\label{lem:cup length}
For any map of pairs $f:(X,A)\to(Y,B)$,
$$\cupl(f)\leq \cat(f).$$
\end{lemma}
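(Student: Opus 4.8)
The plan is to reduce the comparison $\cupl(f)\le\cat(f)$ to two standard facts about cup products: naturality of cohomology under continuous maps, and the behavior of cup products under a covering of $X$ by open sets on which certain classes restrict to zero. So suppose $\cat(f)=m$ and let $U_0,U_1,\dots,U_m$ be the open cover of $X$ supplied by the definition of $\cat(f)$, with $A\subset U_0$ and the homotopy $F$ deforming $f|_{U_0}$ into $B$ through pairs, and with $f|_{U_i}$ nullhomotopic for $i=1,\dots,m$. I want to show that $f^*(\gamma_1\smile\cdots\smile\gamma_m\smile\alpha)=0$ for any $\alpha\in H^*(Y,B;\z2)$ and $\gamma_1,\dots,\gamma_m\in\tilde H^*(Y;\z2)$; this gives $\cupl(f)\le m=\cat(f)$.

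The key step is to produce, for each open set in the cover, a relative cohomology class of the pair $(X,U_i)$ (or $(X,U_0)$) that maps down to one of the factors $\gamma_i$ or $\alpha$ under the appropriate restriction map, and then multiply these relative classes together. Concretely: because $F$ is a homotopy of pairs deforming $f|_{U_0}$ into $B$, the map $f|_{U_0}:(U_0,A)\to(Y,B)$ is homotopic as a map of pairs to a map with image in $B$, hence $(f|_{U_0})^*\colon H^*(Y,B;\z2)\to H^*(U_0,A;\z2)$ is zero; via the long exact sequence of the pair $(X,U_0)$ this lets me lift $f^*\alpha\in H^*(X,A;\z2)$ — more precisely, I use that $f^*\alpha$ comes from a class $\bar\alpha\in H^*(X,U_0;\z2)$. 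Similarly, for $i=1,\dots,m$, since $f|_{U_i}$ is nullhomotopic the restriction $H^*(Y;\z2)\to H^*(U_i;\z2)$ kills $\gamma_i$ (once we pull back by $f$), so $f^*\gamma_i\in H^*(X;\z2)$ is the image of some $\bar\gamma_i\in H^*(X,U_i;\z2)$. Now form the cup product $\bar\gamma_1\smile\cdots\smile\bar\gamma_m\smile\bar\alpha$, which lives in $H^*(X,\,U_1\cup\cdots\cup U_m\cup U_0;\z2)=H^*(X,X;\z2)=0$ because the $U_i$ cover $X$. Mapping this product forward into $H^*(X,A;\z2)$ under the natural homomorphism (restriction of the pair) and using multiplicativity of that homomorphism gives exactly $f^*\gamma_1\smile\cdots\smile f^*\gamma_m\smile f^*\alpha=f^*(\gamma_1\smile\cdots\smile\gamma_m\smile\alpha)$, which is therefore $0$.

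I would organize the write-up as: (1) recall that for a pair $(X,U)$ with $U$ open there is the long exact sequence and that a class in $H^*(X;\z2)$ restricting to zero on $U$ lifts to $H^*(X,U;\z2)$, and likewise the relative version for $(X,A)$ versus $(X,U_0)$ when $A\subset U_0$; (2) use the two homotopy hypotheses to get the vanishing of the relevant restrictions after pulling back by $f$; (3) lift to relative classes $\bar\gamma_i$, $\bar\alpha$; (4) take the product, land in $H^*(X,X;\z2)=0$ using that $\{U_i\}_{i=0}^m$ covers $X$ and the general fact $H^k(X,X)=0$; (5) push forward and invoke naturality/multiplicativity to recognize the image as $f^*(\gamma_1\smile\cdots\smile\gamma_m\smile\alpha)$. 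If $\cat(f)=\infty$ there is nothing to prove.

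The main obstacle, and the only genuinely delicate bookkeeping point, is the handling of the $U_0$ term: unlike the $U_i$ ($i\ge1$) which simply need $f|_{U_i}$ nullhomotopic, the definition of $\cat(f)$ only asks that $f|_{U_0}$ be deformable \emph{into} $B$ through maps of pairs, so I must carefully use that $(f|_{U_0})^*$ vanishes on $H^*(Y,B;\z2)$ (not on $H^*(Y;\z2)$), and correspondingly lift $f^*\alpha$ relative to the pair $(X,A)$ rather than $(X,\emptyset)$ — i.e. work in the commuting ladder relating the long exact sequences of $(X,A)$, $(U_0,A)$, $(X,U_0)$. Once that diagram chase is set up, combining it with the excision-free product $H^*(X,A_0)\otimes H^*(X,A_1)\to H^*(X,A_0\cup A_1)$ for open $A_0,A_1$ is routine; one may cite, e.g., \cite{cp} for the relative cup-length/category formalism if a cleaner reference is preferred over reproducing the diagram chase in full.
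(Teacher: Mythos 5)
Your argument is correct and is precisely the standard lifting-to-relative-classes proof that the paper invokes in one line (``exactness and homotopy properties of cohomology and the definition of the cup product'') and delegates to \cite[Proposition 4.3]{cp}: lift $f^*\gamma_i$ to $H^*(X,U_i;\z2)$ and $f^*\alpha$ to $H^*(X,U_0;\z2)$ via the triple $(X,U_0,A)$, multiply into $H^*(X,X;\z2)=0$, and push forward. You also correctly isolate the only delicate point, namely that $(f|_{U_0})^*$ is killed on $H^*(Y,B;\z2)$ rather than on $H^*(Y;\z2)$, so nothing is missing.
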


\begin{proof}
The proof is an easy consequence of the exactness and homotopy properties of cohomology and the definition of the cup product; see \cite[Proposition 4.3]{cp}.
\end{proof}
\smallskip

\begin{proof}[Proof of Theorem \ref{thm:coron}]
By Theorem \ref{thm:coron_ls}, it suffices to show that
\begin{equation}\label{eq:cl}
\cupl(M)\leq \cat\big[(\bar B_RM,S_RM)\hookrightarrow (\rn,\rn\smallsetminus M)\big].
\end{equation}
From Lemma \ref{lem:cup length} we get that
\begin{equation}\label{eq:cl1}
\cupl[(\bar B_RM,S_RM)\hookrightarrow (\rn,\rn\smallsetminus M)]\leq\cat[(\bar B_RM,S_RM)\hookrightarrow (\rn,\rn\smallsetminus M)].
\end{equation}
Using the exactness and the homotopy properties of cohomology and the five-lemma, we see that the first inclusion in the next line
$$(\bar B_RM,S_RM)\hookrightarrow(\bar B_RM,\bar B_RM\smallsetminus M)\hookrightarrow (\rn,\rn\smallsetminus M)$$
induces an isomorphism in cohomology, whereas the second one induces an isomorphism by excision. As a consequence,
\begin{equation}\label{eq:cl2}
\cupl[(\bar B_RM,S_RM)\hookrightarrow (\rn,\rn\smallsetminus M)]=\cupl(\bar B_RM,S_RM).
\end{equation}
The Thom isomorphism theorem says that there exists a cohomology class $\vt\in H^d(\bar B_RM,S_RM;\z2)$ in dimension $d:=N-\dim M$, called \emph{Thom class}, such that the homomorphism
$$H^j(M;\z2)\equiv H^j(\bar B_RM;\z2)\to H^{d+j}(\bar B_RM,S_RM;\z2),\qquad\gamma\mapsto \gamma\smile\vt,$$
is an isomorphism; see, for instance, \cite[Theorem 8.1]{ms}. As a consequence, if $\gamma_1,\ldots,\gamma_n\in\tilde H^*(M;\z2)$ satisfy $\gamma_1\smile\cdots\smile\gamma_n\neq 0$, then $\gamma_1\smile\cdots\smile\gamma_n\smile\vt\neq 0$. This shows that
$$\cupl(M)\leq \cupl(\bar B_RM,S_RM).$$
Combining this inequality with \eqref{eq:cl1} and \eqref{eq:cl2} we obtain \eqref{eq:cl}. This completes the proof.
\end{proof}

\begin{remark}\label{rem:z2}
\emph{We have stated Theorem \ref{thm:coron} in terms of singular cohomology with $\z2$-coefficients to avoid technicalities arising from the orientability in the Thom isomorphism theorem. Under suitable assumptions, the result holds for other multiplicative cohomology theories as well. }
\end{remark}

\section{The effect of symmetries}
\label{sec:symmetries}

In this section we prove Theorem \ref{thm:sym1}. 
        
Let \( G \) be a closed subgroup of the group \( O(N) \) of linear isometries of $\rn$. The \( G \)-orbit of a point \( x \in \mathbb{R}^N \) is
        \[
        Gx := \{ gx : g \in G \}.
        \]
We denote its cardinality by \( \#Gx \). A subset \( \Omega \subset \mathbb{R}^N \) is called \( G \)-invariant if \( Gx \subset \Omega \) for all \( x \in \Omega \), and a function \( u : \mathbb{R}^N \to \mathbb{R} \) is \( G \)-invariant if it is constant on each \( G \)-orbit \( Gx \).
        
If \( g \in G \) and $u \in D^{1,2}(\rn)$, we define $gu \in D^{1,2}(\rn)$ by $gu(x) := u(g^{-1} x)$. This defines an isometric action of \( G \) on $D^{1,2}(\rn)$. The $ G $-fixed point space of  $D^{1,2}(\rn)$ is the subspace
        \[
        D^{1,2}(\rn)^{G} := \{ u \in D^{1,2}(\rn) : u \text{ is } G\text{-invariant} \}.
        \]
If we assume that \( \Omega \subset \mathbb{R}^N \) is \( G \)-invariant, then the functional
        $$J_\lambda(u)=\frac{1}{2}\irn(|\nabla u|^2 + \lambda\mathds 1_\o u^2) - \frac{1}{2^*}\irn Q_\o(x)|u|^{2^*}$$
is $G$-invariant, i.e., $J_\lambda(gu)=J_\lambda(u)$ for every $g\in G$ and $u\in D^{1,2}(\rn)$. Therefore, by the principle of symmetric criticality \cite[Theorem 1.28]{w}, the critical points of the restriction of $J_\lambda$ to $D^{1,2}(\rn)^{G} $ are precisely the $G$-invariant solutions to the problem \eqref{eq:problem}. The nontrivial ones belong to the Nehari manifold
$$\cN_\lambda^G:=\{u\in D^{1,2}(\rn)^{G}:u\neq 0, \ J_\lambda(u)u=0\}.$$

\begin{lemma} \label{lem:symmetric_ps}
If $\#Gx=\infty$ for every $x\in\overline{\o}$, then the restriction of $J_\lambda$ to $D^{1,2}(\rn)^{G} $ satisfies the Palais-Smale condition, i.e., every sequence $(u_k)$ in $D^{1,2}(\rn)^{G}$ such that $J_\lambda(u_k)\to c$ and $J'_\lambda(u_k)\to 0$ in $(D^{1,2}(\rn))'$ contains a convergent subsequence.
\end{lemma}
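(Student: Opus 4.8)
The plan is to derive the Palais--Smale condition from the concentration--compactness description of Theorem~\ref{thm:struwe} (more precisely, its single--bubble ingredient Lemma~\ref{lem:struwe}), using the $G$-invariance to forbid any blow--up. Let $(u_k)\subset D^{1,2}(\rn)^{G}$ satisfy $J_\lambda(u_k)\to c$ and $J'_\lambda(u_k)\to 0$ in $(D^{1,2}(\rn))'$. First I would reduce to a sequence with vanishing weak limit, exactly as in the proof of Theorem~\ref{thm:struwe}: after passing to a subsequence $u_k\rightharpoonup u$ weakly in $D^{1,2}(\rn)$; since $D^{1,2}(\rn)^{G}$ is a closed linear (hence weakly closed) subspace, $u\in D^{1,2}(\rn)^{G}$, $u$ solves \eqref{eq:problem}, and $v_k:=u_k-u$ belongs to $D^{1,2}(\rn)^{G}$, satisfies $v_k\rightharpoonup 0$, $J_0(v_k)\to c-J_\lambda(u)$ and $J_0'(v_k)\to 0$. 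If $v_k\to 0$ strongly, then $u_k\to u$ strongly and we are done; so suppose $v_k\rightharpoonup 0$ weakly but not strongly and apply Lemma~\ref{lem:struwe} to $(v_k)$.

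Lemma~\ref{lem:struwe} provides $(\xi_k)\subset\o$, $(\varepsilon_k)\subset(0,\infty)$ and $\delta>0$ with $\varepsilon_k^{-1}\dist(\xi_k,\partial\o)\to\infty$ and $\int_{B_{\varepsilon_k}(\xi_k)}|v_k|^{2^*}=\delta$ for all large $k$. Since $\o$ is bounded, $\dist(\xi_k,\partial\o)$ is bounded, so property $(i)$ forces $\varepsilon_k\to 0$, and after a further subsequence $\xi_k\to\xi_*$ for some $\xi_*\in\overline{\o}$. Now fix $n\in\n$. By hypothesis $\#G\xi_*=\infty$, so there exist $g_1,\dots,g_n\in G$ with $g_1\xi_*,\dots,g_n\xi_*$ pairwise distinct; set $\rho:=\tfrac13\min_{i\neq j}|g_i\xi_*-g_j\xi_*|>0$. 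Since each $g_i$ acts continuously, $g_i\xi_k\to g_i\xi_*$, and as $\varepsilon_k\to 0$ the balls $B_{\varepsilon_k}(g_1\xi_k),\dots,B_{\varepsilon_k}(g_n\xi_k)$ are pairwise disjoint for $k$ large. Because $v_k$ is $G$-invariant and each $g_i$ is a linear isometry, the change of variables $x=g_iy$ gives $\int_{B_{\varepsilon_k}(g_i\xi_k)}|v_k|^{2^*}=\int_{B_{\varepsilon_k}(\xi_k)}|v_k|^{2^*}=\delta$, whence
\[
|v_k|_{2^*}^{2^*}\ \geq\ \sum_{i=1}^{n}\int_{B_{\varepsilon_k}(g_i\xi_k)}|v_k|^{2^*}\ =\ n\delta .
\]
On the other hand $(v_k)$ is bounded in $D^{1,2}(\rn)$, so by the Sobolev inequality $\sup_k|v_k|_{2^*}^{2^*}<\infty$, independently of $n$; letting $n\to\infty$ yields a contradiction. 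Hence $v_k\to 0$ strongly, i.e.\ $u_k\to u$ strongly in $D^{1,2}(\rn)$.

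The core of the argument is the observation that blow--up would deposit a fixed quantum $\delta$ of $L^{2^*}$-mass around \emph{every} point of the orbit $G\xi_*$, and that these orbit points stay at a distance bounded below while $\varepsilon_k\to 0$, so an infinite orbit produces unbounded mass. The one place that requires a little care --- the main obstacle --- is making sure the extracted mass is genuinely concentrated at a scale $\varepsilon_k$ much smaller than the separation of the orbit points; this is precisely the localized statement Lemma~\ref{lem:struwe}$(ii)$, and the needed fact $\varepsilon_k\to 0$ comes for free from property $(i)$ and the boundedness of $\o$. Everything else --- the reduction to $u=0$, the weak closedness of the fixed--point space, the invariance of $|\cdot|_{2^*}^{2^*}$ under the isometric action, and the passage from $v_k\to 0$ to $u_k\to u$ --- is routine.
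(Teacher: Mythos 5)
Your proposal is correct and follows essentially the same route as the paper: reduce to $v_k=u_k-u\rightharpoonup 0$, invoke Lemma~\ref{lem:struwe} to extract concentration points $\xi_k\to\xi_*\in\overline{\o}$ carrying a fixed $L^{2^*}$-mass $\delta$, and use the infinite orbit $G\xi_*$ together with the $G$-invariance of $v_k$ to produce arbitrarily many disjoint balls each carrying mass $\delta$, contradicting boundedness. The only (harmless) difference is that you justify the disjointness of the translated balls via $\varepsilon_k\to 0$, while the paper uses the uniform lower bound $|g_i\xi_k-g_j\xi_k|>d$; both work.
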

        
\begin{proof}
Let $u_k\in D^{1,2}(\rn)^{G}$ satisfy $J_\lambda(u_k)\to c$ and $J'_\lambda(u_k)\to 0$ in $(D^{1,2}(\rn))'$. Arguing as in the proof of Theorem \ref{thm:struwe} we see that $(u_k)$ is bounded in $D^{1,2}(\rn)$ and that, after passing to a subsequence, $u_k \rh u$ weakly in $D^{1,2}(\rn)$, $v_k:=u_k-u\rh 0$ weakly in $D^{1,2}(\rn)$, $J_0(v_k)\to c_1$ and $J'_0(v_k)\to 0$ in $(D^{1,2}(\rn))'$. We claim that $u_k\to u$ strongly in $D^{1,2}(\rn)$.

Arguing by contradiction, assume that $(v_k)$ does not converge strongly to $0$. Then, by Lemma \ref{lem:struwe}, there are sequences $(\xi_k)$ in $\o$ and $(\eps_k)$ in $(0,\infty)$ such that $B_{\eps_k}(\xi_k)\subset\o$ and, passing to a subsequence, 
$$\int_{B_{\eps_k}(\xi_k)}|v_k|^{2^*}=\delta>0\qquad\text{for all \ }k\in\n.$$
Since $\o$ is bounded, a subsequence of $(\xi_k)$ converges to some point $\xi\in\overline{\o}$. As $\#G\xi=\infty$, for any given $n\in\n$ there exist $g_1,\ldots,g_n\in G$ such that $g_i\xi\neq g_j\xi$ if $i\neq j$. Set $3d:=\min_{i\neq j}|g_i\xi-g_j\xi|>0$ and choose $k_0\in \n$ so that $|\xi_k-\xi|<d$ if $k\geq k_0$.  Then, since every $g_i$ is an isometry,
$$3d\leq|g_i\xi-g_j\xi|\leq |g_i\xi-g_i\xi_k|+|g_i\xi_k-g_j\xi_k|+|g_j\xi_k-g_j\xi|<|g_i\xi_k-g_j\xi_k| + 2d\quad\text{if \ }k\geq k_0\text{ \ and \ }i\neq j.$$
Therefore, $|g_i\xi_k-g_j\xi_k|>d$ and, as a consequence, $B_{\eps_k}(g_i\xi_k)\cap B_{\eps_k}(g_j\xi_k)=\emptyset$ if $i\neq j$ and $k$ is large enough. Since $u_k$ is $G$-invariant, it follows that 
$$n\delta<n\int_{B_{\eps_k}(\xi_k)}|v_k|^{2^*}=\sum_{i=1}^n\int_{B_{\eps_k}(g_i\xi_k)}|v_k|^{2^*}\leq \io|v_k|^{2^*}\leq\irn |v_k|^{2^*}.$$
But $n$ was chosen arbitrarily. This contradicts $(v_k)$ being bounded in $D^{1,2}(\rn)$ and proves that $u_k\to u$ strongly in $D^{1,2}(\rn)$.
\end{proof}

\begin{proof}[Proof of Theorem \ref{thm:sym1}]
The Nehari manifold $\cN_\lambda^G$ is symmetric with respect to the origin, $0\notin\cN_\lambda^G$, and $J_\lambda$ is even and bounded from below on $\cN_\lambda^G$. By a standard argument, it follows from Lemma \ref{lem:symmetric_ps} that the restriction of $J_\lambda$ to $\cN_\lambda^G$ satisfies the Palais-Smale condition; see, e.g., \cite[Lemma 2.2]{chs}. 

Let $D^{1,2}_0(\o)^G$ be the space of $G$-invariant functions in $D^{1,2}_0(\o)$ and $\Sigma_\o^G:=\{u\in D^{1,2}_0(\o)^G:\|u\|_\lambda=1\}$ be the unit sphere in $D^{1,2}_0(\o)^G$ with the norm $\|\cdot\|_\lambda$. The map
$$\Sigma_\o^G\to\cN_\lambda^G,\qquad u\mapsto \left(\frac{\|u\|_\lambda^2}{\io |u|^{2^*}}\right)^\frac{1}{2^*-2}u,$$
is continuous and odd. Therefore, $\infty=\mathrm{genus}(\Sigma_\o^G)\leq\mathrm{genus}(\cN_\lambda^G)$, where ``$\mathrm{genus}$" means the Krasnoselskii genus.

It follows from \cite[Theorem II.5.7]{s} that $J_\lambda$ has infinitely many pairs of critical points on $\cN_\lambda^G$.
\end{proof}

\subsection*{Acknowledgments}

J. Faya would like to express his gratitude to M. Clapp and A. Saldaña from the Instituto de Matemáticas for their generous hospitality during his stay.

\subsection*{Disclosure statement}
The authors report there are no competing interests to declare.

\bigskip

\begin{flushleft}
\textbf{Mónica Clapp}\\
Instituto de Matemáticas\\
Universidad Nacional Autónoma de México \\
Campus Juriquilla\\
76230 Querétaro, Qro., Mexico\\
\texttt{monica.clapp@im.unam.mx} 
\medskip

\textbf{Jorge Faya}\\
Instituto de Ciencias Fisicas y Matem\'aticas\\
Universidad Austral de Chile\\
Facultad de Ciencias\\
Av. Rector Eduardo Morales Miranda 23\\
Valdivia, Chile\\
\texttt{jorge.faya@uach.cl}
\medskip

\textbf{Alberto Saldaña}\\
Instituto de Matemáticas\\
Universidad Nacional Autónoma de México \\
Circuito Exterior, Ciudad Universitaria\\
04510 Coyoacán, Ciudad de México, Mexico\\
\texttt{alberto.saldana@im.unam.mx}
\end{flushleft}
	
\end{document}